\documentclass[reqno,a4paper,11pt]{amsart}
\usepackage[T1]{fontenc}
\usepackage[utf8]{inputenc}
\usepackage{lmodern}
\usepackage{ amssymb }
 \usepackage{amsthm}
\usepackage{comment}
\usepackage{amssymb,amsmath,amsthm}
\usepackage{xcolor}
\usepackage{graphicx}
\usepackage[all]{xy}
\usepackage{tikz}
\usepackage{relsize}
\usepackage{mhsetup}
\usepackage{mathtools}
\usepackage{float}
\usepackage{stmaryrd}
\usepackage{array}
\usepackage[left=3cm,right=3cm,top=3cm,bottom=3cm]{geometry} % Decrease the margins %
\usepackage[bottom]{footmisc}
\usepackage[colorlinks,citecolor=blue,linkcolor=blue,urlcolor=blue,filecolor=blue,breaklinks]{hyperref}
\usepackage[format=plain,indention=1em,labelsep=quad,labelfont={up},textfont={small},margin=2em]{caption}
\usepackage[mathscr]{euscript}
\usepackage{accents}
\usepackage{footnotebackref}
\newcolumntype{C}[1]{>{\centering\let\newline\\\arraybackslash\hspace{0pt}}m{#1}}
\definecolor{lightblue}{rgb}{0.8,0.8,1}
\numberwithin{equation}{section}
\numberwithin{figure}{section}

\definecolor{vdarkred}{rgb}{0.7,0,0}
\theoremstyle{definition}
    \newtheorem{defn}{Definition}[section]
    \newtheorem{rmk}[defn]{Remark}

    \newtheorem{notation}[defn]{Notation}
 
 \theoremstyle{plain}
    \newtheorem{prop}[defn]{Proposition}
    \newtheorem{lem}[defn]{Lemma}
    \newtheorem{thm}[defn]{Theorem}
    \newtheorem{coro}[defn]{Corollary}

\makeatletter
\renewcommand*{\@seccntformat}[1]{\upshape\csname the#1\endcsname.\hspace{1ex}}
\renewcommand*{\section}{\@startsection{section}{1}{\z@}%
	{2.5ex \@plus 1ex \@minus 0.2ex}%
	{1.5ex \@plus 0.2ex}%
	{\normalfont\normalsize\bfseries}}
\renewcommand*{\subsection}{\@startsection{subsection}{2}{\z@}%
	{2.5ex \@plus 1ex \@minus 0.2ex}%
	{-1.5ex \@plus -0.2ex}%
	{\normalfont\normalsize\bfseries}}
\renewcommand*{\subsubsection}{\@startsection{subsubsection}{3}{\z@}%
	{2.5ex \@plus 1ex \@minus 0.2ex}%
	{-1.5ex \@plus -0.2ex}%
	{\normalfont\normalsize\bfseries}}
\renewcommand*{\paragraph}{\@startsection{paragraph}{4}{\z@}%
	{2.5ex \@plus 1ex \@minus 0.2ex}%
	{-1.5ex \@plus -0.2ex}%
	{\normalfont\normalsize\bfseries}}
\renewcommand*{\subparagraph}{\@startsection{subparagraph}{5}{\z@}%
	{2.5ex \@plus 1ex \@minus 0.2ex}%
	{-1.5ex \@plus -0.2ex}%
	{\normalfont\normalsize\slshape}}
\makeatother

\newcommand{\Q}{\mathbb Q}

\newcommand{\N}{\mathbb N}

\newcommand{\coeff}{\operatorname{Coeff}}
\newcommand{\qptr}{\operatorname{qptr}}
\newcommand{\qt}{\operatorname{qt}}

\newcommand{\Z}{\mathbb Z}
\newcommand{\Li}{{\mathbb L}}
\newcommand{\Liq}{\mathbb L}

\newcommand{\Aut}{\mathrm{Aut}}

\newcommand{\End}{\mathrm{End}}

\definecolor{dgreen}{RGB}{0,150,0}

\newcommand{\incl}[3][right]%
{%
\draw[<-,>=#1 hook] #2 to ($ #2!0.5!#3 $);
\draw[->,>=stealth'] ($ #2!0.5!#3 $) to #3;%
}
\newcommand{\inclusion}[5][right]%
{%
\draw[<-,>=#1 hook] #4 to ($ #4!0.5!#5 $) node[#2,font=\small]{#3};
\draw[->,>=stealth'] ($ #4!0.5!#5 $) to #5;%
}

{\begin{compactitem}

}%
{\end{compactitem}}
{\begin{compactitem}[#1]

}%
{\end{compactitem}}
{\begin{compactdesc}

}%
{\end{compactdesc}}

\newcommand{\cC}{\mathcal{C}}

\newcommand{\cI}{\mathcal{I}}
\newcommand{\cJ}{\mathcal{J}}

\newcommand{\cN}{\mathcal{N}}

\renewcommand{\geq}{\geqslant}
\renewcommand{\leq}{\leqslant}
\renewcommand{\footnoterule}{%
  \kern -3pt
  \hrule width \textwidth height 0.4pt
  \kern 2.6pt
}

\definecolor{dgreen}{RGB}{0,150,0}

\allowdisplaybreaks[4]

\begin{document}
\title[Maximal universal invariants from finite quotients of Verma modules]{Maximal universal invariants of knots from finite quotients of Verma modules}
%\title{Maximal universal invariants from quantum traces on finite quotients of Verma modules}
%\title{Unique quantum traces over quotient rings globalising ADO and coloured Jones polynomials at fixed levels}
\author{Cristina Anghel, Jun Murakami}
\date{}
\begin{abstract}

We construct a sequence of new universal quantum knot invariants that are lifts of both the semi-simple and non semi-simple $U_q(sl_2)$ quantum knot invariants. More specifically, for any level $\cN$ we define a ``level $\cN$ universal invariant'' $ \widetilde{\Omega}_{\cN}(L)$ arising from quantum traces on finite quotients of the generic Verma module over certain quotient rings. We show that for $\cN$ prime, this is the maximal invariant that can arise from the $\cN$-part of the Verma module, and it is a specific  interpolation between the $\cN^{th}$ coloured Jones and $\cN^{th}$ ADO polynomials. For $\cN$ non prime $ \widetilde{\Omega}_{\cN}(L)(q,s)$ has a richer structure, it recovers the  $\cN^{th}$ coloured Jones and $\cN^{th}$ ADO polynomials, but it could contain more information which is not seen in the sequence of coloured Jones and ADO invariants. 
\end{abstract}
%{\tableofcontents}

\maketitle
{
\makeatletter
\renewcommand*{\BHFN@OldMakefntext}{}
\makeatother
}
\vspace{-5mm}

\section{Introduction}\label{introduction}

We study quantum invariants arising from the infinite dimensional representation theory of the quantum group $U_q(sl_2)$. 
The family of finite dimensional representations of this quantum group at generic $q$ leads to the sequence of coloured Jones polynomials whereas the  representations at roots of unity give the sequence of coloured Alexander polynomials (or ADO invariants \cite{ADO}). 
The asymptotic behaviour of these invariants is subject to important questions in low dimensional topology: the Volume Conjecture (Kashaev \cite{K},\cite{M2}) and Gukov-Manolescu's Conjecture (\cite{GM}) which predict geometric information that is contained in their limit with respect to the colour. 
Motivated by the question of the conjectured geometry contained in these invariants the second author provided a unified topological framework that encodes all coloured Jones and ADO polynomials at bounded colours by graded intersections in configuration spaces (\cite{Cr1}, \cite{Cr2}). They lead to universal geometrical invariants for links (\cite{Cru1}, \cite{Cru2}).  These unification results opened up questions about encoding non-semisimple invariants and semi-simple invariants directly from a unified perspective. In \cite{CrG} the author defined a globalisation of Jones and Alexander invariants by constructing a graded intersection that is taking values in a  quotient of a polynomial ring in two variables. 

{\bf Main results} In this paper we address this problem at higher colours, and we want to unify coloured Jones and coloured Alexander invariants corresponding to $\cN$ dimensional representations of $U_q(sl_2)$. 
Our strategy is to use quantum groups over two variables together with their braid group action directly on the infinite dimensional representation, which is given by the Verma module. We will work over the Laurent polynomial ring $\Liq=\Q[q^{\pm2}, s^{\pm 2}]$.

{\bf Question:} For a fixed level $\cN$, which is the maximal quotient of $\Liq$ in which we can obtain invariants from the $\cN$-dimensional part of the $U_q(sl_2)$-Verma module?
We start from the generic braid action on the Verma module and define quantum traces on specialisations of the $\cN$-dimensional part over certain quotient rings. We prove the following:
\par
\begin{itemize}
\item[1)] {\color{blue} \em Level $\cN$ unified invariant at prime levels.} If $\cN$ is prime, then the maximal quotient ring of $\Liq$ in which we get invariants from Verma modules is given by the ideal $\cI_\cN$ (as in \eqref{qideal}, Corollary \ref{coro:prime}).
%\par
The quantum trace in this quotient leads to a 
``Level $\cN$ unified invariant'' (for any level $\cN$):
$${\color{black} \Omega_{\cN}(L)(q,s) \in \Li_{\cN}}.$$
\item[2)] {\color{blue} \em Level $\cN$ universal invariant.} If $\cN$ is not prime, then the maximal quotient ring in which we get invariants from Verma modules is given by the ideal $\widetilde{\cI}_{\cN}$ (see \eqref{idealm}, Theorem \ref{thm:THEOREM}). 
The intersection form in this quotient leads to a maximal invariant called ``Level $\cN$ universal invariant'':  $$ \widetilde{\Omega}_{\cN}(L)(q,s) \in \widetilde{\Li}_\cN.$$

\item[3)] {\color{blue} \em Interpolation formula in the level $\cN$ quotient ring.} The level $\cN$ unified invariant $\Omega_{\cN}(L)(q,s)$ is an interpolation between the coloured Jones and ADO invariants at level $\cN$, which we compute explicitely in Theorem \ref{TINT}.
\end{itemize}

\subsection{Quotient rings} 

Let us fix a natural number $\cN \in \mathbb N$, $\cN \geq 2$. 
\begin{defn}[Level $\cN$-quotient ideals] We are going to define two ideals in the ring $\Liq$, and work over the associated quotient rings, as follows.

1) (Interpolation quotient ideal) Let
\begin{equation}\label{qideal}
\cI_\cN:=\left(\varphi_\cN(q^2)\cdot (s^2q^{-2(\cN-1)}-1)\right)
\subseteq \Liq=\Q[q^{\pm2}, s^{\pm 2}]
\end{equation}
and refer to it as the ``interpolation quotient ideal''.

2) (Maximal quotient ideal) Also, we define 
\begin{equation}\label{idealm}
\widetilde{\cI}_\cN:=\left( \varphi_\cN(q^2) \cdot (s^2 q^{-2\cN+2} - 1)\right)  \cap \prod_{\text{\scriptsize$\begin{matrix}
 d\mid \cN \\ d \neq 1, \cN \end{matrix}$}}
 \left(\varphi_d(q^2),\ \  \frac{s^{2d}-1}{s^2-1} \right)
\subseteq \Liq=\Q[q^{\pm2}, s^{\pm 2}]
\end{equation}
and call it the ``maximal quotient ideal''. Here, $\varphi_\cN(q)$ is the $\cN^{th}$ cyclotomic polynomial.
\end{defn}
\begin{defn}[Quotient rings] We define the following two  quotient rings of $\Liq$.

1) The ``interpolation ring'', given by:
\begin{equation}
\Li_{\cN}:=\Q[q^{\pm2}, s^{\pm 2}] \ / \ \cI_\cN.
\end{equation}
2) The ``maximal ring'', given by
\begin{equation}\label{qring}
\widetilde{\Li}_{\cN}:=\Q[q^{\pm2}, s^{\pm 2}] \ / \widetilde{\cI}_\cN.
\end{equation}
\end{defn}
Note that there is the natural projection $\pi_{\cN} : \widetilde{\Li}_{\cN} \to {\Li}_{\cN}$ since $\widetilde{\cI}_\cN \subset {\cI}_\cN$, and $\widetilde{\Li}_{\cN} = {\Li}_{\cN}$ if $\cN$ is a prime number.  

\subsection{Level $\cN$ representation}

We are going to use the $U_q(sl_2)$ Verma module $\hat{V}$ (defined over $\Li$), which is generated by an infinite family of vectors $\{v_0, v_1,...\}$. 
Using the two variable $R$-matrix associated to this quantum group given by \eqref{eq:Rpositive} and \eqref{eq:Rnegative},
 we get a braid group action:
\begin{equation}
\hat{\rho}_n: B_n \rightarrow \Aut_{U_q(sl_2)}\left(\hat{V}^{\otimes n}\right). 
\end{equation}
We fix a level $\cN$ and we want to use the $\cN$-dimensional $\Li$-submodule generated by the first $\cN$ vectors of the Verma module. 
We refer to this module as the ``$\cN$-finite part from the Verma module''. 
We introduce the following modules. 
\begin{defn}[Quotient modules]
Let us denote the above specialisations of the $\cN$-finite part from the Verma module over the interpolation ring and over the maximal ring respectively: 
\begin{equation}
\begin{aligned}
& 1) \text{ Interpolation module}: \ \ \ \ \
V_{\cN}:=\langle v_{0},...,v_{\cN-1}\rangle_{\Li_{\cN}} \subseteq \hat{V}\otimes_{\Li}{\Li_{\cN}}\\
& 2)  \text{ Maximal module}: \ \ \ \ \ \ \ \ \ \ \ \widetilde{V}_{\cN}:=\langle v_{0},...,v_{\cN-1}\rangle_{\widetilde{\Li}_{\cN}} \subseteq \hat{V}\otimes_{\Li}{\widetilde{\Li}_{\cN}}.
\end{aligned}
\end{equation}
\end{defn}
We obtain the following. 
\begin{thm}[Action on quotients of specialised Verma module over the quotient rings] 
The generic braid group action $\hat{\rho}_n$ leads to braid group representations on the quotient modules as below:
$$1) \ \rho_{\cN,n}  :B_n \rightarrow \Aut_{\Li_{\cN}} \left( V_{\cN}^{\otimes n}\right) $$
$$2) \ \widetilde{\rho}_{\cN,n}  :B_n \rightarrow \Aut_{\widetilde{\Li}_{\cN}} \left( \widetilde{V}_{\cN}^{\otimes n}\right). $$
\end{thm}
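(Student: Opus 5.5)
The plan is to show that, after base change to $\Li_{\cN}$ (respectively $\widetilde{\Li}_{\cN}$), the span $\langle v_i\otimes\dots\rangle$ of tensors with all indices $\leq \cN-1$ is a subrepresentation of $\hat V^{\otimes n}\otimes_{\Li}\Li_{\cN}$. Equivalently, every coefficient of the $R$-matrix that carries $v_i\otimes v_j$ with $i,j\leq\cN-1$ to some $v_k\otimes v_l$ with $k$ or $l\geq\cN$ has to vanish in the quotient ring. Since the braid generators act by $\mathrm{Id}\otimes R^{\pm1}\otimes \mathrm{Id}$ (composed with the flip), it suffices to check this for $R^{\pm1}$ on $\hat V^{\otimes 2}$. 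Invertibility then comes for free, because $R$ and $R^{-1}$ both preserve the submodule, so the restrictions are mutually inverse. The braid relations also hold automatically, since they hold for $\hat\rho_n$ and the restriction of a relation holds on an invariant submodule.

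The first step is to write out the matrix coefficients from \eqref{eq:Rpositive}. The $R$-matrix is built from $\sum_m c_m E^m\otimes F^m$ together with the Cartan part. On the Verma module with weight variable $s$, $F$ raises the index, $F v_j = v_{j+1}$ up to normalisation, and $E$ lowers it, $E v_i=[i]_q[\lambda-i+1]$-type factor $\cdot v_{i-1}$. So $R(v_i\otimes v_j)$ is a sum over $m\leq i$ of terms in $v_{i-m}\otimes v_{j+m}$. A term leaves the $\cN$-part exactly when $j+m\geq\cN$. Its coefficient contains the product $\prod_{k=0}^{m-1}(\text{$E$-factor at } i-k)$ times the divided-power normalisation. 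In the standard two-variable normalisation this product becomes something like $\binom{j+m}{m}_q\prod_{k=1}^{m}(s^2q^{-2(i-k)}-1)$. The same analysis applies to \eqref{eq:Rnegative}.

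The key step is to show that such a coefficient lies in $\widetilde{\cI}_\cN$ whenever $i,j\leq \cN-1$, $m\leq i$ and $j+m\geq \cN$. Since $\widetilde{\cI}_\cN\subseteq\cI_\cN$, this also gives the claim for $\cI_\cN$. For membership in $\cI_\cN$, I need the factor $\varphi_\cN(q^2)$ and the factor $(s^2q^{-2(\cN-1)}-1)$ to both divide the coefficient.
\begin{itemize}
\item The quantum binomial $\binom{j+m}{m}_q$ with $j<\cN\leq j+m<2\cN$ is divisible by $[\cN]_q$, and hence by $\varphi_\cN(q^2)$.
\item The $s$-factor is more delicate. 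The weight factor $(s^2q^{-2(\cN-1)}-1)$ appears in $E v_{\cN}$. So I would choose the normalisation where $E v_{k}$ carries $(s^2 q^{-2(k-1)}-1)$, move the factor onto the vector indices crossing the threshold $\cN$, and use $F^m v_j$ with $j+m\geq\cN$.
\end{itemize}
If a single monomial factor does not appear directly, I would instead rescale the basis ($v_k\mapsto$ divided powers), which does not change the $\Li$-span of the first $\cN$ vectors.

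For the maximal ideal I must additionally show that the coefficient lies in each $(\varphi_d(q^2),\frac{s^{2d}-1}{s^2-1})$ for the proper divisors $d$ of $\cN$. These are ideals, so the product ideal is generated by products of generators and the statement splits prime-by-divisor. Here I expect the main obstacle. I would argue that $\binom{j+m}{m}_q$ contains $\varphi_d(q^2)$ unless the carries vanish, and in that case the $s$-product $\prod_k(s^2q^{-2k}-1)$ over $d$ consecutive $k$ reduces modulo $\varphi_d(q^2)$ to $\prod_{\zeta^d=1}(s^2\zeta-1)\sim s^{2d}-1$. Combined with the available factor, this lands in the ideal. This is a $q$-Lucas style computation. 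I would first verify it for $\cN=4$ and $d=2$ to fix the exact form before writing the general argument.
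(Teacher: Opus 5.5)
Your route is sound but differs from the paper's. The paper defines $\cC_\cN$ as the ideal generated by exactly the coefficients that carry $v_i\otimes v_j$ out of the $\cN$-part, so invariance over $\Liq/\cC_\cN$ is tautological. It then invokes Proposition~\ref{prop:EQ}, $\cC_\cN=\widetilde{\cI}_\cN$, which is proved in the Appendix via the intermediate ideal $\cI'_\cN$, a $q$-Pascal induction and reductions modulo $\varphi_d(q^2)$. Case 1) then follows from $\widetilde{\cI}_\cN\subseteq\cI_\cN$. You instead check generator by generator that each exit coefficient lies in $\cI_\cN$ and in every $(\varphi_d(q^2),\frac{s^{2d}-1}{s^2-1})$. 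You do this for both $R$ and $R^{-1}$; the paper does not treat $R^{-1}$ separately. In other words, you prove only $\cC_\cN\subseteq\widetilde{\cI}_\cN$. That is all the statement needs, and it is the more robust route. The reverse inclusion claimed in Proposition~\ref{prop:EQ} is irrelevant here, and in fact it fails for $\cN=4$. There $\widetilde{\cI}_4=\cI_4$, since $\cI_4\subseteq(q^2+1,s^2+1)$. But under $q\mapsto\sqrt{-1}$ every generator of $\cC_4$ lands in $(s^4-1)$, while the generator of $\cI_4$ goes to $-2(s^2+1)$.

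Several details of your sketch need correcting.

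\textbf{Normalisation.} You may not choose or rescale the normalisation. The statement is about the fixed lattice $\hat V$, and rescaling by non-units changes the exit coefficients. For divided powers even the span of $v_0,\dots,v_{\cN-1}$ changes, since $[k]_q!$ is not a unit. Reading \eqref{eq:Rpositive}, the coefficient of $v_{j+n}\otimes v_{i-n}$ is, up to a unit, ${n+j \brack j}_q\prod_{t=j}^{j+n-1}(s^2q^{-2t}-1)$. So the $s$-factors are indexed by the raised index $j$, not by $i$ as in your guess. Under your guess, $s^2q^{-2(\cN-1)}-1$ could never occur, since $i\le\cN-1$. Because $j\le\cN-1\le j+n-1$, the factor $t=\cN-1$ is present.

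\textbf{The factor $\varphi_\cN(q^2)$.} ${n+j \brack j}_q$ is not divisible by $[\cN]_q$ in general; for example ${4 \brack 2}_q=[3]_q(q^2+q^{-2})$. However, $q$-Lucas gives exactly one factor $\varphi_\cN(q^2)$, since $n,j<\cN\le n+j<2\cN$. Coprimality with $s^2q^{-2(\cN-1)}-1$ then puts the coefficient in $\cI_\cN$.

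\textbf{Product over divisors.} Membership in the product over $d$ reduces to membership in each factor only because these ideals are pairwise comaximal over $\Q$. That makes the product equal to the intersection.

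\textbf{The step you flagged.} This is short.
If $n<d$, then $j\ge\cN-n>\cN-d$, so $j\bmod d=j-\cN+d\ge d-n$. Hence there is a carry, and $\varphi_d(q^2)$ divides the binomial.
If $n\ge d$, the $s$-product contains $d$ consecutive factors, which modulo $\varphi_d(q^2)$ give a unit times $s^{2d}-1$.
Either way the coefficient lies in $(\varphi_d(q^2),s^{2d}-1)\subseteq(\varphi_d(q^2),\frac{s^{2d}-1}{s^2-1})$.
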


\subsection{Unique quantum traces over quotient rings}
We want to define knot invariants from quantum traces that are constructed on the endomorphism rings of these modules. 
First, we will prove that there exists a unique quantum trace associated the level $\cN$ maximal module.
 
\begin{defn}
A map $\qptr_{\widetilde{V}_{\cN}^{\otimes n}} : \End(\widetilde{V}_{\cN}^{\otimes (n+1)})\to \End(\widetilde{V}_{\cN}^{\otimes n})$
defined by
\[
\qptr_{\widetilde{V}_{\cN}^{\otimes n}}(f)_{v_{i_1,i_2, \cdots, i_n}}^{v_{j_1,j_2, \cdots, j_n}}
:=\sum_{j=0}^{\cN-1}
g_j(q, s) \cdot f_{v_{i_1,i_2, \cdots, i_n,j}}^{v_{j_1,j_2, \cdots, j_n,j}}
\]
for some scalars $g_i(s, q)$
 is called a ``quantum partial trace'' if there are two non-zero scalars $a$, $b$ satisfying
\begin{equation}
\qptr_{\widetilde{V}_{\cN}^{\otimes n}}(\widetilde\rho_{\cN,n}(\sigma_n\beta_n)) 
=
a \widetilde\rho_{\cN,n}(\beta_n), 
\quad
\qptr_{\widetilde{V}_{\cN}^{\otimes n}}(\widetilde\rho_{\cN,n}(\sigma_n^{-1}\beta_n) )
=
a^{-1} \widetilde\rho_{\cN,n}(\beta_n)
\end{equation}
 for any braid $\beta_n \in B_n$ {(see also Notation \ref{endcoeff})}.  
\end{defn}

  \begin{thm}[Construction of a unique quantum trace over quotient rings]

There exists a unique quantum trace, up to a scalar, defined on the tensor powers of the maximal module (Theorem \ref{thm:uqtr1}):
$$
\qptr_{\widetilde{V}_{\cN}^{\otimes n}}:\operatorname{End}(\widetilde{V}_{\cN}^{\otimes (n+1)})\rightarrow \operatorname{End}(\widetilde{V}_{\cN}^{\otimes n}),
$$ 
where $g_j(q, s) = g_0(q, s) q^{-2j}$ for some non-zero $g_0(q, s)$.  
\end{thm}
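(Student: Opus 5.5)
Since $\widetilde\rho_{\cN,n}$ is defined by acting with the $R$-matrix on the last two tensor factors, the conditions defining a quantum partial trace reduce to a computation in $\End(\widetilde{V}_{\cN}^{\otimes 2})$. Write $R=\widetilde\rho_{\cN,2}(\sigma_1)$. Because $\widetilde\rho_{\cN,n+1}(\sigma_n\beta_n)=(\mathrm{Id}^{\otimes(n-1)}\otimes R)\circ(\widetilde\rho_{\cN,n}(\beta_n)\otimes \mathrm{Id})$, and the partial trace over the last factor is linear in the last-slot coefficients, it suffices to require
\[
\sum_{j=0}^{\cN-1} g_j\, R_{v_{i,j}}^{v_{k,j}} = a\,\delta_{i,k},\qquad
\sum_{j=0}^{\cN-1} g_j\, (R^{-1})_{v_{i,j}}^{v_{k,j}} = a^{-1}\,\delta_{i,k}
\]
for all $0\le i,k\le \cN-1$. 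Conversely, if these hold, the trace condition holds for every $\beta_n$: $\beta_n\otimes \mathrm{Id}$ commutes past the partial trace, and the identities on the last two factors give the claim. The plan is therefore to (i) reduce to these two matrix identities, (ii) solve them, and (iii) check that the solution is valid in $\widetilde{\Li}_{\cN}$ and not only generically.

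For step (ii), I will use the explicit form of the $R$-matrix \eqref{eq:Rpositive}. It has the shape $R(v_i\otimes v_j)=\sum_{n} c_{i,j,n}(q,s)\, v_{j+n}\otimes v_{i-n}$, with weight factors of the form $s^{\ldots}q^{\ldots}$ times $q$-binomials and products $\prod (s^2q^{-2l}-1)$. The coefficient $R_{v_{i,j}}^{v_{k,j}}$ is nonzero only when $j+n=k$ and $i-n=j$, so $n=i-j=k-j$ forces $k=i$ with $n=i-j\ge 0$. Off-diagonal conditions are then automatic, and the diagonal conditions become, for each $i$,
\[
\sum_{j=0}^{i} g_j\, c_{i,j,i-j}=a .
\]
This is a triangular system in $g_0,g_1,\dots$. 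The $i=0$ equation gives $a=g_0 c_{0,0,0}$ (typically $a=g_0 s^{\ldots}$, a unit times $g_0$). Each subsequent equation determines $g_i$ from $g_0,\dots,g_{i-1}$, provided the leading coefficient $c_{i,i,0}$ (a pure weight term, hence a unit) is invertible. This gives uniqueness up to the scalar $g_0$.

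For existence, I will verify that $g_j=g_0q^{-2j}$ solves the system. I expect this from the standard generic fact that $K^{-1}$ (up to normalisation) is the pivotal element for the Verma module. Concretely, after factoring out common weight factors, the identity $\sum_j q^{-2j}c_{i,j,i-j}=c_{0,0,0}$ is a $q$-binomial-type summation, and I would prove it in $\Liq$ itself. The $R^{-1}$ equation follows by the same computation using \eqref{eq:Rnegative}, giving $a^{-1}$. Because the identities hold in $\Liq$, they descend to $\widetilde{\Li}_{\cN}$. The restriction to indices $<\cN$ is harmless, since each sum only involves $j\le i\le\cN-1$, and the representation $\widetilde\rho_{\cN,n}$ is well defined by the previous theorem.

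I expect the main obstacle to be uniqueness over the quotient ring, where zero divisors appear. There, $g_0$ is only known to be nonzero, and the recursion must not require dividing by non-units. The leading coefficients $c_{i,i,0}$ are monomials in $q,s$, hence units, so the recursion $g_i = c_{i,i,0}^{-1}(a-\sum_{j<i}g_jc_{i,j,i-j})$ stays inside $\widetilde{\Li}_{\cN}$. Substituting $a=g_0c_{0,0,0}$ then yields $g_i=g_0q^{-2i}$ by the same summation identity, applied inductively. The remaining care is to check that the generic identity is not obstructed by the products $\prod(s^2q^{-2l}-1)$ vanishing mod $\widetilde{\cI}_\cN$; since we only use identities valid in $\Liq$, this should cause no problem.
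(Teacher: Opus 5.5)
Your reduction to the identities $\sum_j g_j (R^{\pm1})_{v_{i,j}}^{v_{k,j}}=a^{\pm1}\delta_{i,k}$ is correct, and so is your uniqueness argument; it is essentially the paper's. The $R$-system is triangular with unit diagonal $R_{v_{i,i}}^{v_{i,i}}=s^{-2i}q^{2i^2}$, and $\sum_{j=0}^{i}q^{-2j}R_{v_{i,j}}^{v_{i,j}}=1$ really is an identity in $\Liq$ (Lemma \ref{lem:qptr} is its telescoped form).

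The gap is in the existence half, at the $R^{-1}$ condition. In \eqref{eq:Rnegative} the coefficient of $v_i\otimes v_j$ in $R^{-1}(v_i\otimes v_j)$ comes from $n=j-i$, so the condition reads
\[
\sum_{j=i}^{\cN-1} g_j\,(R^{-1})_{v_{i,j}}^{v_{i,j}}=a^{-1},\qquad 0\le i\le \cN-1 .
\]
These sums run over $j\geq i$ and are cut off at $\cN-1$. So your remark that ``each sum only involves $j\le i\le \cN-1$'' is false here: the truncation matters, and the identity does not hold in $\Liq$.

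Already for $\cN=2$ and $g_j=q^{-2j}$, the $i=1$ sum is $s^2q^{-4}$ and the $i=0$ sum is $1+q^{-2}(1-s^2)$. Their difference is $-q^{-2}\varphi_2(q^2)(s^2q^{-2}-1)$. This is nonzero in $\Liq$, and up to a unit it is the generator of $\cI_2$.

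Hence ``prove it in $\Liq$ and let it descend'' cannot work. This identity holds only modulo the defining ideal, and that is where the quotient ring enters the trace, not in uniqueness over a ring with zero divisors. The pivotal-element heuristic does not help either, since the infinite-rank Verma module carries no such trace before truncation. The paper does not prove this step in $\Liq$; it quotes the fact that $g_j\propto q^{-2j}$ satisfies \eqref{eq:qtcondition} as well known.

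There is a second error: the scalar $g_0$ is not free. Since $R_{v_{0,0}}^{v_{0,0}}=1$, the $R$-condition forces $a=g_0$. The $R^{-1}$-condition at $i=\cN-1$ then reads
\[
g_0\,q^{-2(\cN-1)}(R^{-1})_{v_{\cN-1,\cN-1}}^{v_{\cN-1,\cN-1}}=g_0\,s^{2(\cN-1)}q^{-2\cN(\cN-1)}=a^{-1}=g_0^{-1}.
\]
This forces $g_0^2=s^{-2(\cN-1)}q^{2\cN(\cN-1)}$. For example, $g_0=1$ does not give a quantum partial trace.

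So your claim that the $R^{-1}$ computation ``gives $a^{-1}$'' for every $g_0$ is wrong; the only remaining freedom is a sign. This is why the paper ends its proof by fixing $g_0=s^{1-\cN}q^{\cN(\cN-1)}$, with $a=g_0$. Your existence argument needs both this normalisation and an $R^{-1}$ identity proved modulo $\widetilde{\cI}_\cN$.
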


\subsection{Level $\cN$ invariants in quotient rings} 
We are going to use the above quantum trace, in order to introduce the level $\cN$ unified invariant of knots.  
Let 
${g_0}(q, s) = s^{-(\cN-1)}q^{(\cN-1)}$ and
\[
\qt_{\widetilde{V}_{\cN}}:=\qptr_{\widetilde{V}_{\cN}} \circ \qptr_{\widetilde{V}_{\cN}^{\otimes 2}} \circ \cdots \circ \qptr_{\widetilde{V}_{\cN}^{\otimes(n-1)}} : \operatorname{End}(\widetilde{V}_{\cN}^{\otimes n})\rightarrow \operatorname{End}(\widetilde{V}_{\cN}).
\]
Let $K$ be a knot and $\beta_n$ is a $n$ braid whose closure is isotopic to $K$.  
According to the  standard argument to construct a knot invariant from the quantum $R$ matrix, $\qt_{\widetilde{V}_{\cN}}(\rho_{\cN, n}(\beta_n))$   is a scalar matrix, and this scalar normalized by the writhe gives a knot invariant.  
\begin{defn}[Level $\cN$ unified invariant]
Let
\begin{equation}
\widetilde\Omega_{\cN}(\beta_n)(q,s)
:= 
s^{w(\beta_n)(\cN-1)}q^{-w(\beta_n){\cN(\cN-1)}} \cdot 
\qptr_{\widetilde{V}_{\cN}}\left( \rho_{\cN,n}
\right)_{v_0}^{v_0} \in \widetilde{\Li}_{\cN}.
\end{equation}
\end{defn}
Our main result is that $\widetilde\Omega_{\cN}(\beta_n)(q,s)$ is a universal knot invariant coming from the level $\cN$ representations of $U_q(sl_2)$.  
\begin{thm}[{\bf Level $\cN$ maximal universal invariant}]\label{thm:THEOREM}
For any $\cN \in \N$, $\cN \geq 2$, $\widetilde{\Li}_{\cN}$ is the largest quotient of the polynomial ring $\Liq$ with the property that the image of the quantum trace in this quotient gives a knot invariant. 
\par
More precisely, the image of the level $\cN$ quantum trace in this quotient ring $\widetilde{\Omega}_{\cN}(L)(q,s) \in \widetilde{\Li}_{\cN}$ is a knot invariant. 
Also, if $\widetilde{\Li}_{\cN}'$ is a quotient of $ \Li$ in which $\widetilde{\Omega}_{\cN}(\beta_n)$ is a knot invariant, then the quotient onto $\widetilde{\Li}_{\cN}'$ factors through $\widetilde{\Li}_{\cN}$.
\end{thm}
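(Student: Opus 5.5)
The plan has two halves. First, the images of the braid generators must preserve the $\cN$-finite part and satisfy the Markov conditions over $\widetilde{\Li}_\cN$. Second, any quotient in which the trace is invariant must kill $\widetilde{\cI}_\cN$.

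\emph{Existence.} The first step is to understand where the generic $R$-matrix on $\hat V\otimes\hat V$ fails to preserve $\langle v_0,\dots,v_{\cN-1}\rangle^{\otimes 2}$. Acting on $v_i\otimes v_j$ with $i,j\le \cN-1$ produces coefficients of $v_{i+k}\otimes v_{j-k}$. These involve the quantum binomial $\left[\begin{smallmatrix} i+k\\ k\end{smallmatrix}\right]_q$ times products $\prod_{l}(s^{2}q^{-2l}-1)$ coming from the action of $F$ and $E$ on the Verma module. For $i+k\ge \cN$, such a coefficient must vanish in the quotient.

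I would write these ``overflow'' coefficients explicitly. Next I would check that each is divisible by $\varphi_\cN(q^2)(s^2q^{-2(\cN-1)}-1)$ or lies in $(\varphi_d(q^2),[d]_{s^2})$ for each proper divisor $d$; here $[d]_{s^2}=\frac{s^{2d}-1}{s^2-1}$. The divisibility by $\varphi_d(q^2)$ comes from $[\cN]_q!$-type factors in the binomials. The factor $[d]_{s^2}$ should arise because at $q^2$ a primitive $d$-th root of unity the product over $l$ of $(s^2q^{-2l}-1)$ collapses to $s^{2d}-1$ up to a factor. This gives well-definedness of $\widetilde\rho_{\cN,n}$, as in the Theorem on actions.

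Then the Markov property is supplied by the earlier uniqueness theorem on quantum traces (Theorem \ref{thm:uqtr1}) with $g_j=g_0q^{-2j}$. With the chosen $g_0$ and the writhe normalisation, the standard argument (conjugation invariance via cyclicity plus stabilisation) gives that $\widetilde\Omega_\cN$ is a knot invariant. For a knot, the $(1,1)$-tangle trick shows that the scalar value is read off from the $v_0$ entry.

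\emph{Maximality.} Here I would exhibit specific braids whose traces detect each generator. Suppose $\widetilde{\Li}'_\cN=\Liq/J$ is a quotient in which the construction gives an invariant; in particular the braid action must descend to the $\cN$-part. Applying $\hat\rho_2(\sigma_1)$ to suitable vectors $v_i\otimes v_j$ must yield zero components outside the $\cN$-part. So each overflow coefficient lies in $J$, and $J$ contains the ideal generated by all overflow coefficients.

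The crux is to compute that this ideal of overflow coefficients equals $\widetilde{\cI}_\cN$. One would use the CRT-style description of $\widetilde{\cI}_\cN$ as an intersection. Locally at each factor one checks that the generators produced (for example the coefficient of $v_\cN\otimes v_{j-k}$ from $v_{\cN-k}\otimes v_j$, with $k$ ranging) have gcd exactly $\varphi_\cN(q^2)(s^2q^{-2\cN+2}-1)$ away from the divisor loci. At $\varphi_d(q^2)=0$, these generators generate $[d]_{s^2}$.

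I expect this ideal-equality computation to be the main obstacle, especially showing that the sum of the divisor-local ideals is exactly the product/intersection in \eqref{idealm} and not something larger. I would first try $\cN=p^2$ and $\cN=pq$ by hand to confirm the pattern, then induct on divisors using the factorisation $q^{2\cN}-1=\prod_{d\mid\cN}\varphi_d(q^2)$.
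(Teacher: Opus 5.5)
Your outline follows the paper's route. $\cC_\cN$ is by definition the ideal of overflow coefficients, the Markov step is Theorem \ref{thm:uqtr1}, and maximality rests on Proposition \ref{prop:EQ}, $\cC_\cN=\widetilde{\cI}_\cN$.

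The existence half works once your ``or'' is read as ``and'', since membership in an intersection is required. Take $j\le \cN-1<\cN\le n+j$ and $n\le \cN-1$. Then ${n+j\brack j}_q\prod_{k=0}^{n-1}(sq^{-k-j}-s^{-1}q^{k+j})$ contains the factor with $k+j=\cN-1$ and exactly one factor $\varphi_\cN(q^2)$. Modulo $\varphi_d(q^2)$, for a proper divisor $d$, there are two cases:
\begin{itemize}
\item If $n<d$, the coefficient is $0$: the interval $(j,n+j]$ contains the multiple $\cN$ of $d$, so $\varphi_d(q^2)$ divides the binomial.
\item If $n\ge d$, the coefficient is a multiple of $s^{2d}-1$, because the product runs over a full period of residues mod $d$.
\end{itemize}

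The gap is in the maximality half, at your claim that at $\varphi_d(q^2)=0$ the generators generate $[d]_{s^2}$. By the same two cases they generate only $(s^{2d}-1)=((s^2-1)[d]_{s^2})$. The factor $s^2-1$ cannot be removed: every generator vanishes at $q^2=\zeta_d$, $s^2=1$, where $[d]_{s^2}=d\neq 0$.

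This cannot be repaired by bookkeeping; the maximality statement itself fails. Take $\cN=4$.
\begin{itemize}
\item Every overflow coefficient vanishes at $q^2=-1$, $s^2=1$. For $n\ge 2$ the product has a factor with $k+j$ even. For $n=1$ one has $j=3$ and ${4\brack 3}_q=[4]_q$, which is divisible by $q^2+1$.
\item The element $h=\varphi_4(q^2)(s^2q^{-6}-1)$ lies in $\widetilde{\cI}_4=\cI_4\cap(q^2+1,\,s^2+1)$, because $s^2q^{-6}-1\equiv-(s^2+1)$ mod $q^2+1$.
\item However $h$ takes the value $2\cdot(-2)=-4$ at that point.
\end{itemize}
So $\cC_4\subsetneq\widetilde{\cI}_4$. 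At $q=\sqrt{-1}$, $s=1$ one also checks directly that every diagonal sum $\sum_j q^{-2j}(R^{\pm1})_{v_{i,j}}^{v_{i,j}}$ equals $1$ and that $g_0=1$. Hence this specialisation is a quotient of $\Liq$ in which the construction gives a knot invariant but which does not factor through $\widetilde{\Li}_4$. For composite $\cN$ the maximality claim cannot be proved as stated.

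The paper's appendix makes the same slip. Lemma \ref{strcy} recalls $\cJ_{\cN,d}$ with $\prod_{i=1}^{d-1}(s^2q^{-2i}-1)$. Definition \ref{def:condition} and Step I instead produce $\prod_{i=0}^{d-2}(s^2q^{-2i}-1)$, which vanishes at $s^2=1$ modulo $\varphi_d(q^2)$ and so is not associated to $[d]_{s^2}$.

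Your CRT-plus-local strategy, carried out correctly, does close. It uses three facts:
\begin{itemize}
\item $\cC_\cN$ contains $[\cN]_q(s^2q^{-2\cN+2}-1)$ up to a unit.
\item $\cC_\cN$ is contained in $(s^2q^{-2\cN+2}-1)$.
\item The generator with $j=\cN-d$, $n=d$ reduces to $\pm(\cN/d)(s^{2d}-1)$.
\end{itemize}
The result is $\cC_\cN=\cI_\cN\cap\bigcap_{d}(\varphi_d(q^2),\,s^{2d}-1)$. That ideal, not $\widetilde{\cI}_\cN$, is the one for which your descent argument proves maximality.
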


If $\cN$ is a prime integer, then $\widetilde{\Li}_{\cN} = \Li_{\cN}$ and this theorem implies the following.  

\begin{coro}[{\bf Level $\cN$ maximal unified invariant at prime parameters}]
\label{coro:prime}
If $\cN \in \mathbb N$ is prime, the ring $\Liq_{\cN}$ is the largest quotient of the polynomial ring $\Li$ with the property that the image of the quantum trace in this quotient gives a knot invariant. 

More precisely, for any $\cN \in \cN$, $\cN\geq 2$ we have that 
\[
\Omega_{\cN}(L)(q,s) = \pi_{\cN}(\widetilde{\Omega}_{\cN}(\beta_n)) \in \Li_{\cN}
\]
 is a well-defined oriented knot invariant. Also, if $\cN$ prime and we consider $\Li_{\cN}'$ a quotient of $ \Li$ in which $\Omega_{\cN}(\beta_n)$ is a knot invariant, then the quotient onto $\Li_{\cN}'$ factors through $\Li_{\cN}$.
\end{coro}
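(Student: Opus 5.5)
The plan is to deduce Corollary \ref{coro:prime} from Theorem \ref{thm:THEOREM} by way of the projection $\pi_{\cN}:\widetilde{\Li}_{\cN}\to\Li_{\cN}$. The argument has three steps: invariance of $\Omega_\cN$ for every $\cN$, the identification $\widetilde{\cI}_\cN=\cI_\cN$ when $\cN$ is prime, and maximality inherited from the theorem.

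\emph{Invariance for all $\cN\geq 2$.} Since $\widetilde{\cI}_\cN\subseteq \cI_\cN$, the quotient map $\Liq\to\Li_\cN$ factors as $\pi_\cN$ composed with $\Liq\to\widetilde{\Li}_\cN$. The braid action $\rho_{\cN,n}$ on $V_\cN^{\otimes n}$ is the base change of $\widetilde\rho_{\cN,n}$ along $\pi_\cN$. Both actions are induced from the generic action $\hat\rho_n$ restricted to $\langle v_0,\dots,v_{\cN-1}\rangle$, so their matrix coefficients are the images of the same Laurent polynomials.

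The quantum partial trace with weights $g_j=g_0q^{-2j}$ is defined by a universal formula. Hence it commutes with $\pi_\cN$, and the writhe normalisation $s^{w(\cN-1)}q^{-w\cN(\cN-1)}$ is a unit in both rings. It follows that
\[
\pi_\cN\bigl(\widetilde\Omega_\cN(\beta_n)\bigr)=s^{w(\beta_n)(\cN-1)}q^{-w(\beta_n)\cN(\cN-1)}\,\qt_{V_\cN}\bigl(\rho_{\cN,n}(\beta_n)\bigr)^{v_0}_{v_0}.
\]
By Theorem \ref{thm:THEOREM}, $\widetilde\Omega_\cN(\beta_n)$ depends only on the knot closure of $\beta_n$. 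Applying the ring homomorphism $\pi_\cN$ preserves this independence, so $\Omega_\cN(L)$ is a well-defined oriented knot invariant in $\Li_\cN$.

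\emph{The prime case.} If $\cN$ is prime, the index set $\{d\mid\cN,\ d\neq 1,\cN\}$ is empty. The product in \eqref{idealm} is therefore the empty product of ideals, which is the unit ideal $\Liq$. I will state this convention explicitly, since it is the only place where a reader might hesitate. With it, $\widetilde{\cI}_\cN=\cI_\cN\cap\Liq=\cI_\cN$, so $\widetilde{\Li}_\cN=\Li_\cN$, $\pi_\cN$ is the identity, and $\Omega_\cN=\widetilde\Omega_\cN$.

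\emph{Maximality.} Let $\Li'_\cN=\Liq/J$ be a quotient in which $\Omega_\cN(\beta_n)$ is a knot invariant. For $\cN$ prime, this statement is literally the same as saying that $\widetilde\Omega_\cN(\beta_n)$ is a knot invariant in $\Liq/J$. The maximality clause of Theorem \ref{thm:THEOREM} then gives $\widetilde{\cI}_\cN\subseteq J$, that is, $\cI_\cN\subseteq J$. Hence $\Liq\to\Li'_\cN$ factors through $\Li_\cN$.

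No step is a real obstacle: all the substance sits in Theorem \ref{thm:THEOREM}. The only points needing care are the empty-product convention and the compatibility of the trace and normalisation with base change along $\pi_\cN$.
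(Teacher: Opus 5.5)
Your proposal is correct and follows the paper's route. The paper also deduces the corollary directly from Theorem \ref{thm:THEOREM}, observing that $\widetilde{\Li}_\cN=\Li_\cN$ when $\cN$ is prime (the product over divisors $d\neq 1,\cN$ is empty) and defining $\Omega_\cN$ in general as the image under $\pi_\cN$. Your explicit checks of the empty-product convention and of the compatibility of the trace and writhe normalisation with base change along $\pi_\cN$ just spell out what the paper leaves implicit.
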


\begin{figure}[h]
\begin{center}
\hspace*{-6mm}\begin{tikzpicture}
[x=1.2mm,y=1.4mm]
% Nodes of the diagram

\node (Jn)               at (-45,-2.4)    {$\begin{matrix}\\[-10pt]J_{\cN}(L)\end{matrix}$};

\node (Am)               at (-12,-2.4)    {$\begin{matrix}
\\[-10pt]{\Phi_{\cN}}(L) \end{matrix}$};

\node(IA) at (0,-10) {Coloured Alexander polynomials};
\node(IJ) at (-54,-10) {Coloured Jones polynomials};
\node(IA11) at (-50,20) {Different at non-prime levels};
\node(IJ11) at (-24,20) {$\pi_{\cN}$};
\node(IJ11) at (2,20) {The same for $\cN$ prime Cor.\ref{coro:prime}};

\node(LI)[draw,rectangle,inner sep=3pt,color=blue] at (-27,12) {\begin{tabular}{l}
 \ \ \ \ \ Level $\cN$ \\ unified invariant\\ \ \    ${\color{black} \Omega_{\cN}(L)(q,s) \in \Li_{\cN}}$
\end{tabular}};

\node(LIM)[draw,rectangle,inner sep=3pt,color=blue] at (-27,30) {\begin{tabular}{l}
 \ \ \ \ \ Level $\cN$ \\ universal invariant \\ \ \    ${\color{black} \widetilde{\Omega}_{\cN}(L)(q,s) \in \widetilde{\Li}_{\cN}}$
\end{tabular}};

\node(L)[draw,rectangle,inner sep=3pt] at (-27,47) {\begin{tabular}{l}
 \ \ \ \ \ \ Braid action \\ on the Verma module \end{tabular}};
\node(IA) at (10,45) {$\Liq=\Q[q^{\pm2}, s^{\pm 2}] $}; 
\node(IA) at (10,28) {$\widetilde{\Li}_{\cN}=\Q[q^{\pm2}, s^{\pm 2}] \ / \widetilde{\cI}_\cN$}; 
\node(IA) at (10,10) {$\Li_{\cN}=\Q[q^{\pm2}, s^{\pm 2}] \ / \ \cI_\cN$};
\node(IA) at (-70,33) {$\widetilde{\cI}_\cN=\left( \varphi_\cN(q^2)\cdot(s^2 q^{-2\cN+2} - 1)\right) \cap$}; 
\node(IA) at (-70,27) {$\cap \prod_{\text{\scriptsize$\begin{matrix}
 d\mid \cN \\ d \neq 1, \cN \end{matrix}$}}
 \left(\varphi_d(q^2), \frac{s^{2d}-1}{s^2-1} \right)
$}; 

\node(IA) at (-69,11) {$\cI_\cN=\left(\varphi_\cN(q^2)  (s^2q^{-2(\cN-1)}-1)\right)
$};

\node(Q)[draw,rectangle,inner sep=3pt,color=red, minimum width = 2cm, 
    minimum height = 1cm] at (-45,-3) {$\phantom{A}$};
\node(Q)[draw,rectangle,inner sep=3pt,color=red, minimum width = 2cm, 
    minimum height = 1cm] at (-13,-3) {$\phantom{A}$};
   \draw[->]             (LI)      to node[right,xshift=2mm,font=\large]{}   (Am);
      \draw[->]             (LI)      to node[right,xshift=2mm,font=\large]{}   (Jn);
 \draw[->]             (LIM)      to node[right,xshift=2mm,font=\large]{}   (LI);
   \draw[->]             (L)      to node[right,xshift=10mm,yshift=-9mm,font=\large]{ \hspace{10mm}{\normalsize Quotient ring  \ Th.\ref{thm:THEOREM}}}   (LIM);
   \draw[->]             (LIM)      to node[right,xshift=10mm,yshift=-10mm,font=\large]{ \hspace{8mm} {\normalsize Interpolation formula Th.\ref{TINT}}}   (LI);
\end{tikzpicture}
\end{center}
\caption{Invariants in quotient rings}
\end{figure}
Our next result aims to answer the question about the nature of the two invariants that we constructed so far. 
 \subsection{Universal versus unified invariants}
 In subsection \ref{rec} we study relations between these invariants, and we show in Theorem \ref{JA3} that the level $\cN$ universal invariant recovers the level $\cN$ unified invariant. Then in Theorem  \ref{JA1} and Theorem \ref{JA3} we show that both level $\cN$ universal and unified invariant specialise to the $\cN^{th}$ coloured Jones and ADO polynomials by change of coefficients. 
 \par
 Then, we show that for any $\cN$ (not necessary a prime number) the level $\cN$ invariant $\Omega_{\cN}(L)(q,s)$ is an interpolation between the coloured Jones and ADO invariants at level $\cN$, as below.
 \begin{thm}[{\bf Level $\cN$ unified invariant interpolates coloured Jones and ADO invariants}]\label{TINT} We have that  $\Omega_{\cN}(L)(q,s) \in \Li_{\cN}$ is a well defined oriented knot invariant and globalises the $\cN^{th}$ coloured Jones and coloured Alexander invariants, as follows: 
 \begin{equation}
\Omega_{\cN}(L)(q,s) = J_{\mathcal{N}}(L, q) + \Phi_{\mathcal{N}}(L, s) - \Phi_{\mathcal{N}}(L, q^{1-\mathcal N}).
\end{equation}
 \end{thm}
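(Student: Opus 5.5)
The plan is to use the Chinese remainder theorem on the interpolation ideal. Write $A=\varphi_\cN(q^2)$ and $B=s^2q^{-2(\cN-1)}-1$. Since $A$ lies in $\Q[q^{\pm2}]$ and $B$ is monic of degree one in $s^2$, the two polynomials are coprime in the UFD $\Liq$. Hence $\cI_\cN=(AB)=(A)\cap(B)$. Moreover $(A)+(B)$ is a proper ideal: modulo $B$ we have $\Liq/(B)\cong\Q[q^{\pm2}]$ via $s^2\mapsto q^{2(\cN-1)}$, and the image of $A$ there is again $\varphi_\cN(q^2)$.

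It follows that $\Li_\cN$ embeds as a fibre product
\[
\Li_\cN\hookrightarrow \Liq/(A)\times_{\Liq/(A,B)}\Liq/(B),
\]
where $\Liq/(A)\cong\Q(\xi)[s^{\pm2}]$ with $\xi=q^2$ a primitive $\cN$th root of unity, and $\Liq/(B)\cong\Q[q^{\pm2}]$. By the standard exact sequence $0\to R/(I\cap J)\to R/I\oplus R/J\to R/(I+J)$, an element of $\Li_\cN$ is therefore determined by its two images.

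\textbf{Computing the two images.} Since $\Omega_\cN(L)$ is a knot invariant by Corollary \ref{coro:prime}, it is enough to compute them.

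\emph{Image in $\Liq/(B)$.} Put $s^2=q^{2(\cN-1)}$, i.e.\ $s=q^{\cN-1}$ formally. At this value the Verma module with highest weight $s$ has $v_0,\dots,v_{\cN-1}$ spanning a submodule isomorphic to the $\cN$-dimensional irreducible module, because $F v_{\cN-1}$ has coefficient $[\cN-1-\ldots]$, which vanishes at $\lambda=\cN-1$. The $R$-matrix then restricts to the standard one. The quantum trace with weights $g_j=g_0q^{-2j}$, $g_0=s^{-(\cN-1)}q^{\cN-1}$, becomes the standard $K^{-1}$-type pivotal trace, and the normalisation $s^{w(\cN-1)}q^{-w\cN(\cN-1)}$ becomes the usual framing correction. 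So the image is $J_\cN(L,q)$, with $J_\cN(\text{unknot})=1$ because we take the $v_0$ coefficient of the partial trace rather than the full trace.

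\emph{Image in $\Liq/(A)$.} Here $q$ is a root of unity of order $\cN$ (in $q^2$), and the $\cN$-part is the ADO/Akutsu--Deguchi--Ohtsuki representation with generic parameter $s$. By the Theorem giving the action $\rho_{\cN,n}$ on $V_\cN$ and the matching of $g_j$ with the ADO modified trace, the image is $\Phi_\cN(L,s)$. This is the normalised ADO invariant, computed via the open $(1,1)$-tangle, which agrees with the $v_0^{v_0}$ entry.

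\textbf{Compatibility and assembling the formula.} We must check that
\[
X:=J_\cN(L,q)+\Phi_\cN(L,s)-\Phi_\cN(L,q^{1-\cN})
\]
has these same two images. Then, by injectivity of the fibre product map, $\Omega_\cN(L)=X$ in $\Li_\cN$.

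\emph{Modulo $B$.} We have $s\equiv q^{\cN-1}$. Since $\Phi_\cN$ depends on $s$ only through $s^{\pm2}$ (up to the normalisation), $\Phi_\cN(L,s)\equiv\Phi_\cN(L,q^{\cN-1})$. Using the symmetry $\Phi_\cN(L,s)=\Phi_\cN(L,s^{-1})$, or equivalently the fact that the term is written with $q^{1-\cN}$ matching the convention $s^2q^{-2(\cN-1)}=1$, the last two terms cancel and $X\equiv J_\cN$.

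\emph{Modulo $A$.} We need $J_\cN(L,q)\equiv\Phi_\cN(L,q^{1-\cN})$ at $\varphi_\cN(q^2)=0$. This is exactly the compatibility of the two images modulo $(A,B)$. It also follows directly: both are the value of the single element $\Omega_\cN$ in $\Liq/(A,B)$, since the images of one element of $\Li_\cN$ automatically agree there.

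\textbf{Main obstacle.} The main obstacle is the careful identification of the specialisations with the standard normalisations of $J_\cN$ and $\Phi_\cN$. This includes the square roots $s$ versus $s^2$, the framing factor, and the sign and orientation conventions that make the argument of the ADO term $q^{1-\cN}$ rather than $q^{\cN-1}$. I would check these on the unknot and on the trefoil by direct computation with $\eqref{eq:Rpositive}$.
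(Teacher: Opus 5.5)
Your fibre-product framework is sound. It is a tidier version of what the paper does by hand: the paper writes $\Omega_\cN-\Phi_\cN(L,s)=\Omega^\Phi_\cN\,\varphi_\cN(q^2)$ and uses the generator of $\cI_\cN$ to replace $\Omega^\Phi_\cN(s,q)\varphi_\cN(q^2)$ by its value at the special point, and your injectivity of $\Li_\cN\to\Liq/(A)\times\Liq/(B)$ encodes the same thing.

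\textbf{The gap.} It is the passage from the point your own computation produces to the point $q^{1-\cN}$ in the statement. Your computation produces $s^2=q^{2(\cN-1)}$, i.e.\ $s=q^{\cN-1}$; this is what $\Liq/(B)$ imposes, and what $Kv_0=sv_0$ requires for the $\cN$-dimensional irreducible.
\begin{itemize}
\item In $\Liq/(B)\cong\Q[q^{\pm2}]$ nothing is reduced modulo $\varphi_\cN(q^2)$. So the cancellation you need is an exact identity $\widetilde\Phi(q,q^{\cN-1})=\widetilde\Phi(q,q^{1-\cN})$ of Laurent polynomials, for whatever lift $\widetilde\Phi\in\Liq$ of $\Phi_\cN(L,s)$ you chose; the coefficients of $\Phi_\cN(L,s)$ only live in $\Q[q^{\pm2}]/(\varphi_\cN(q^2))$. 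A symmetry of the ADO invariant holds only modulo $\varphi_\cN(q^2)$, so it cannot supply this.
\item With $q^{1-\cN}$ the right-hand side is not even a well-defined element of $\Li_\cN$. Changing the lift by $\varphi_\cN(q^2)s^2$ changes it by $\varphi_\cN(q^2)(s^2-q^{-2(\cN-1)})$, which is not a multiple of $\varphi_\cN(q^2)(s^2-q^{2(\cN-1)})$.
\item Your remark that $q^{1-\cN}$ ``matches'' $s^2q^{-2(\cN-1)}=1$ is false: at $s=q^{1-\cN}$ that expression equals $q^{-4(\cN-1)}\neq 1$.
\item Your mod-$A$ step has the same slip. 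The common image of $\Omega_\cN$ in $\Liq/(A,B)$ gives $J_\cN(L,q)\equiv\Phi_\cN(L,q^{\cN-1})$, not $\Phi_\cN(L,q^{1-\cN})$. Modulo $\varphi_\cN(q^2)$ these are the values at $s^2=q^{-2}$ and $s^2=q^{2}$, which are distinct points once $\cN\geq 3$.
\item The symmetry $s\mapsto s^{-1}$ is not the one available here. The dual of the module with highest weight $s$ has highest weight $q^{2(\cN-1)}s^{-1}$, and that map fixes $q^{\cN-1}$.
\end{itemize}

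\textbf{What your argument does prove.} Taking the identifications of Theorem~\ref{JA1} at $s=q^{\cN-1}$, it proves, completely,
\[
\Omega_\cN(L)(q,s)=J_\cN(L,q)+\Phi_\cN(L,s)-\Phi_\cN(L,q^{\cN-1})\in\Li_\cN .
\]
Both of your checks then become immediate. Modulo $B$ the two $\Phi$ terms cancel for any lift. Modulo $A$ the needed congruence $J_\cN(L,q)\equiv\Phi_\cN(L,q^{\cN-1})$ is forced by compatibility in $\Liq/(A,B)$. This version is also lift-independent: a change $\varphi_\cN(q^2)h$ of lift alters it by $\varphi_\cN(q^2)\bigl(h(q,s)-h(q,q^{\cN-1})\bigr)\in\cI_\cN$.

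The paper's proof runs into the same issue and passes over it. It asserts that $\varphi_\cN(q^2)(s-q^{1-\cN})$ vanishes in $\Li_\cN$, and it specialises at $s=q^{1-\cN}$, which does not kill $\cI_\cN$. So the exponent should be read as $\cN-1$, and the attempt to reach $1-\cN$ through a symmetry of $\Phi_\cN$ should be dropped.
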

As we have seen above, the level $\cN$ invariant $\Omega_{\cN}(L)(q,s) \in \Li_{\cN}$ has a clear structure, providing a globalisation that interpolates coloured Jones and ADO invariants at level $\cN$. 
\begin{rmk}[Rich structure of the level $\cN$ universal invariant] 
The level $\cN$ universal invariant  $\widetilde{\Omega}_{\cN}(L)(q,s) \in \widetilde{\Li}_{\cN}$ has a richer structure, belonging to maximal ring which surjects onto the interpolation ring. We have well-defined specialisation maps from $\widetilde{\Li}_{\cN}$ to the rings where the $\cN^{th}$ coloured Jones and ADO invariants belong, so $\widetilde{\Omega}_{\cN}(L)(q,s) $ recovers $J_{\cN}(L)$ and $\Phi_{\cN}(L)$ by specialisation of coefficients. 
\par
On the other hand, the maximal ring $\widetilde{\Li}_{\cN}$ is given by the ideal 
$$
\widetilde{\cI}_\cN=\left( \varphi_\cN(q^2) \cdot (s^2 q^{-2\cN+2} - 1)\right)  \cap \prod_{\text{\scriptsize$\begin{matrix} d\mid \cN \\ d \neq 1, \cN \end{matrix}$}}\left(\varphi_d(q^2), \frac{s^{2d}-1}{s^2-1} \right) \subseteq \Liq.
$$
If we fix $d \mid \cN$, the presence of the ideal
$$
\left(\varphi_d(q^2), \frac{s^{2d}-1}{s^2-1}\right)
$$ 
suggests that $\widetilde{\Omega}_{\cN}(L)(q,s) $ is related to ADO invariants at level $d$ and an invariant arising from Verma modules at the highest weight parameter being the $d^{th}$ root of unity. This shows that this $d$-part of the ideal, generated by both terms $\varphi_\cN(q^2)$ and $\frac{s^{2d}-1}{s^2-1}$  has a rich structure which in turn suggests that the invariant itself could encode deeper information than the sequence of coloured Jones and ADO invariants at levels that divide $\cN$.
\end{rmk}
\subsection{Further directions}
\subsubsection{Structure of the level $\cN$ universal invariant}  We would like to study the form of the maximal ring at non-prime levels $\widetilde{\Li}_{\cN}$ and the information encoded in the maximal level $\cN$ universal invariant at  $\widetilde{\Omega}_{\cN}(L)(q,s) \in \widetilde{\Li}_{\cN}$.
\subsubsection{Geometrical Level $\cN$ universal invariant} 
In \cite{Cr2}, \cite{Cr1} the author constructed universal knot invariants and unifications of coloured Jones and ADO polynomials using topological tools, based on configuration spaces in the punctured disc. The level $\cN$ universal and level $\cN$ unified invariant should have purely topological counterparts in that context, given by Lagrangian intersections in configuration spaces. We are planning to investigate such models for our future research.
\subsubsection{Level $\cN$ universal invariants for $3$-manifolds} 
This direction concerns the lift of the level $\cN$ universal knot invariants to invariants for $3$-manifolds. Coloured Jones and ADO knot invariants are building blocks for the Witten-Reshetikhin-Turaev invariants (\cite{Witt}, \cite{RT}) and the Costantino-Geer-Patureau invariant (\cite{BCGP}) respectively. We plan to investigate the existence of $3$-manifolds invariants at level $\cN$ that unify the Witten-Reshetikhin-Turaev invariant and the Costantino-Geer-Patureau invariant.
\subsection*{Acknowledgements} 
 The first author gratefully acknowledges the support of the ANR grant ANR-24-CPJ1-0026-01 at Universit\'e Clermont Auvergne - LMBP. Also, she acknowledges partial support by grants of the Ministry of Research, Innovation and Digitization, CNCS - UEFISCDI, project numbers  PN-IV-P2-2.1-TE-2023-2040 and PN-IV-P1-PCE-2023-2001, within PNCDI IV. 
The second author gratefully acknowledges  support by JSPS KAKENHI Grant Number JP23K20214. 
\section{Quantum group ${U}_q(sl_2)$}
\subsection{Notations}\label{Not}
We will use the following notations for quantum numbers:
$$ 
\{ x \} =q^x-q^{-x}, \ \ \ \
 [x]_{q}= \frac{q^x-q^{-x}}{q-q^{-1}}, \ \ \ \ \ \ \ 
 [n]_q!=[1]_q \cdot [2]_q \cdot \ldots \cdot [n]_q,
$$
$$
{n \brack j}_q=\frac{[n]_q!}{[n-j]_q! [j]_q!}.
$$
\begin{notation}(Specialisation of coefficients)\label{N:spec}\\ 
Let us consider two rings $R,S$ and a free $R$-module $M$. We suppose that we have a fixed basis $\mathscr B$ of $M$. We suppose that we have a specialisation of coefficients that is given by a ring morphism:
$\psi: R \rightarrow S.$
Then, we define the specialisation of $M$ by the change of the coefficients $\psi$ as the following $S$-module: $$M|_{\psi}:=M \otimes_{R} S$$ which has the corresponding basis given by 
$$
\mathscr B_{M|_{\psi}}:=\mathscr B \otimes_{R}1 \in M|_{\psi}. 
$$
\end{notation}
\begin{notation}[Rings and quotient rings] 
We will use the Laurent polynomial rings, denoted as below:
\[
\Li^{\Z}=\mathbb Z[q^{\pm2}, s^{\pm 2}],  \qquad
\Liq=\mathbb Q[q^{\pm2}, s^{\pm 2}].
\]
\end{notation}
{
\begin{notation}(Endomorphism coefficients) \label{endcoeff}Let us consider a set of indices 
$$
i_1,...,i_{n} \in \{ 0, \cdots, \cN-1\}.
$$
We define the associated vector: 
$$
v_{i_1,...,i_n}:=v_{i_1}\otimes \cdots v_{i_n} \in \widetilde{V}_{\cN}^{\otimes n}.
$$
Also, for an endomorphism $f \in \operatorname{End}(V_{\cN}^{\otimes n})$ we denote by:
\begin{equation}
f_{v_{i_1,i_2, \cdots, i_n}}^{v_{j_1,j_2, \cdots, j_n}}:=\coeff(v_{j_1,j_2, \cdots, j_n}, f(v_{i_1,i_2, \cdots, i_n})).
\end{equation}
Here, for a fixed basis of a module $V_{\cN}^{\otimes n}$ and two basis vectors $v$, $w$ we denote by 
$$\coeff(v,w)$$ 
the coefficient of the vector $v$ in the decomposition of $w$ with respect to the fixed basis.
\end{notation}}
\subsection{Verma module of $U_q(sl_2)$}\label{2}
%
%\subsection{$U_q(sl_2)$ and its representations}
Let us fix $q, s$ to be parameters and in the next part we are going to work over the ring 
$\Li$.
We will use the version of the quantum group $U_q(sl_2)$ with divided powers, as presented in \cite{JK}.
\begin{defn}[Quantum group over two variables]
Let  $U_q(sl_2)$ be the algebra over $\Li$ that is generated by the sequence of elements $\{ E,\, F^{(n)}, \,K^{\pm1} \mid n \in \mathbb N\}$ subject to the following relations:
\begin{equation*}
\begin{aligned}
&KK^{-1}=K^{-1}K=1; \ \ \ KE=q^2EK; \ \ \ KF^{(n)}=q^{-2n} F^{(n)}K;\\
&F^{(n)}F^{(m)}= {n+m \brack n}_{q} F^{(n+m)};\\
&[E,F^{(n+1)}]=F^{(n)}(q^{-n}K-q^{n}K^{-1}).
\end{aligned}
\end{equation*}
\end{defn}
Then  $U_q(sl_2)$ has a structure of a Hopf algebra (as in \cite{JK}).
\begin{defn}[Verma module]
We define $\hat{V}$ to be the $\Li$-module that is generated by a sequence of vectors that we denote as $\{v_0, v_1,...\}$. Then, we consider the following $U_q(sl_2)$ action on this module $\hat{V}$ (see \cite{JK}):
\begin{equation*}%\label{action}  
\begin{cases}
Kv_i=sq^{-2i}v_i,\\
Ev_i=v_{i-1},\\
\displaystyle F^{(n)}v_i = {n+i \brack i}_q \prod _{k=0}^{n-1} (sq^{-k-i}-s^{-1} q^{k+i}) v_{i+n}.
\end{cases}
\end{equation*}
\end{defn}
Further, using the structure of the Hopf algebra, we have a braiding that leads to the following $R$-matrix.
\begin{prop}[Generic R-matrix action (\cite{JK})] \label{R:R}
We have the following action on the basis of the tensor power $\hat{V}\otimes \hat{V}$:
\begin{equation}\label{eq:Rpositive}
R(v_i\otimes v_j)= s^{-(i+j)} \sum_{n=0}^{i} q^{2(i-n)(j+n)}q^{\frac{n(n-1)}{2}} {n+j \brack j}_q \prod_{k=0}^{n-1}(sq^{-k-j}-s^{-1}q^{k+j}) \ v_{j+n}\otimes v_{i-n}
 \end{equation}
 \begin{equation}\label{eq:Rnegative}
 R^{-1}(v_i\otimes v_j)= s^{(i+j)} \sum_{n=0}^{j} (-1)^{n}q^{-2ij}q^{-\frac{n(n-1)}{2}} {n+i \brack i}_q \prod_{k=0}^{n-1}(sq^{-i-k}-s^{-1}q^{k+i}) \ v_{j-n}\otimes v_{i+n}.
 \end{equation}
Then, this will lead to a braid group representation, which we denote as follows:
\begin{equation}
\begin{aligned}
\hat{\rho}_n: B_n \rightarrow \Aut_{U_q(sl_2)}\left(\hat{V}^{\otimes n}\right), \ \ \ \ \ \ \ \ \ \ \ \ \ \\
\ \ \ \ \ \ \ \ \ \ \ \ \ \ \ \ \ \ \sigma _i^{\pm 1}\mapsto Id_V^{\otimes (i-1)}\otimes  R^{\pm 1}  \otimes Id_V^{\otimes (n-i-1)}.
\end{aligned}
\label{eq:action}
\end{equation}
\end{prop}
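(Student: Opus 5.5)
The statement to prove is Proposition \ref{R:R}: the formulas \eqref{eq:Rpositive} and \eqref{eq:Rnegative} give the action of the braiding and its inverse on $\hat V\otimes\hat V$, and they define a braid group representation $\hat\rho_n$ by $U_q(sl_2)$-module automorphisms. The plan is to realise $R$ as $\tau\circ\mathcal R$, where $\tau$ is the flip and $\mathcal R$ is the universal $R$-matrix of \cite{JK} evaluated on $\hat V\otimes\hat V$. In the divided-power normalisation, $\mathcal R = q^{H\otimes H/2}\sum_{n\ge 0} q^{n(n-1)/2}(\text{const})^n E^n\otimes F^{(n)}$.

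\textbf{Step 1: make $\mathcal R$ well defined on $\hat V\otimes\hat V$.} The Cartan factor $q^{H\otimes H/2}$ is not literally in $\Li$. I would interpret it on weight vectors, with $K$ acting on $v_i$ by $sq^{-2i}$. Writing $s=q^{\lambda}$ formally, the factor acting on $v_a\otimes v_b$ gives
$$q^{(\lambda-2a)(\lambda-2b)/2}.$$
After dropping the global factor $q^{\lambda^2/2}$, which is harmless for the braid relations, this becomes $s^{-(a+b)}q^{2ab}$, which is the required shape.

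\textbf{Step 2: compute the $n$-th summand.} On $v_i\otimes v_j$, the $n$-th term of the series sends $E^n v_i=v_{i-n}$ and
$$F^{(n)}v_j={n+j\brack j}_q\prod_{k=0}^{n-1}(sq^{-k-j}-s^{-1}q^{k+j})\,v_{j+n}.$$
The sum is finite: $n\le i$ because $Ev_0=0$. Applying the Cartan factor to $v_{i-n}\otimes v_{j+n}$ gives $s^{-(i+j)}q^{2(i-n)(j+n)}$, and then the flip produces $v_{j+n}\otimes v_{i-n}$. Collecting these factors gives \eqref{eq:Rpositive}.

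\textbf{Step 3: inverse and braid relations.} For the inverse, I would use $\mathcal R^{-1}=(S\otimes \mathrm{id})\mathcal R$. Equivalently, I would verify directly that \eqref{eq:Rnegative} composed with \eqref{eq:Rpositive} is the identity. This check is triangular in the weight grading, so it reduces to a $q$-binomial identity, namely the $q$-Vandermonde/Chu identity with sign $(-1)^n q^{-n(n-1)/2}$. Next, $R$ commutes with the coproduct action, by quasi-triangularity, which holds here after specialising the formal $q^{\lambda}$ to $s$. The Yang–Baxter equation then follows from $\mathcal R_{12}\mathcal R_{13}\mathcal R_{23}=\mathcal R_{23}\mathcal R_{13}\mathcal R_{12}$. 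Since $R$ acts on adjacent tensor factors, $\hat\rho_n$ respects the far-commutativity relations, and it respects the braid relation by Yang–Baxter.

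\textbf{Main obstacle.} The main difficulty is justifying the universal-$R$ identities when $s$ is an independent variable rather than a power of $q$. I would handle this by viewing every coefficient as a Laurent polynomial in $q$ and $s$. The Yang–Baxter identity holds after specialising $s=q^{\lambda}$ for infinitely many integers $\lambda$, where it follows from the finite-dimensional theory. A polynomial identity in $s$ that holds at infinitely many values holds identically, which would establish Yang–Baxter over $\Li$.
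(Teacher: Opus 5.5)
Your Steps 1 and 2 and the inverse check in Step 3 are correct, and they already go beyond the paper, which gives no argument for this proposition and only cites Jackson--Kerler. Two points to pin down:

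\begin{itemize}
\item With the paper's normalisation $[E,F^{(1)}]=K-K^{-1}$ one has $F^{(n)}=(q-q^{-1})^nF^n/[n]_q!$ in the usual generators. So your unspecified constant is $1$, and $\tau\circ q^{H\otimes H/2}\sum_n q^{n(n-1)/2}E^n\otimes F^{(n)}$, rescaled by $q^{-\lambda^2/2}$, reproduces \eqref{eq:Rpositive} term by term.
\item This is the braiding for $\Delta(E)=E\otimes K+1\otimes E$ and $\Delta(F^{(1)})=F^{(1)}\otimes 1+K^{-1}\otimes F^{(1)}$. State this coproduct explicitly, since the paper does not and your appeal to the general theory needs a fixed convention.
\end{itemize}

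For the inverse, after using $F^{(a)}F^{(b)}={a+b\brack a}_qF^{(a+b)}$ on $\hat V$ the check reduces to $\sum_{b=0}^{N}(-1)^b q^{-b(N-1)}{N\brack b}_q=0$ for $N\geq 1$. This is the $q$-binomial theorem at $z=-1$, not Vandermonde. Finite rank of the weight spaces then makes the one-sided inverse two-sided.

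The gap is in your final step, which carries all the real content: Yang--Baxter and $U_q(sl_2)$-linearity for independent $s$. You assert these hold at $s=q^{\lambda}$ ``by the finite-dimensional theory''. But $\hat V|_{s=q^{\lambda}}$ is still a free module of infinite rank, so no reduction to finite-dimensional representations has been made.

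The missing observation is a truncation, the same mechanism the paper later uses to define $V_{\cN}$.
\begin{itemize}
\item At $s=q^{\lambda}$ the factor with $k+j=\lambda$ in $\prod_{k=0}^{n-1}(sq^{-k-j}-s^{-1}q^{k+j})$ vanishes. Hence $W_\lambda=\langle v_0,\dots,v_\lambda\rangle$ is a $U_q(sl_2)$-submodule, namely the $(\lambda+1)$-dimensional irreducible of highest weight $q^{\lambda}$.
\item For the same reason $R$ and $R^{-1}$ preserve $W_\lambda^{\otimes 2}$. There they equal $q^{\mp\lambda^2/2}$ times the finite-dimensional braiding and its inverse.
\end{itemize}

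Now fix a single matrix coefficient of $R_{12}R_{23}R_{12}-R_{23}R_{12}R_{23}$, or of $R\,\Delta(x)-\Delta(x)\,R$ for a generator $x$.
\begin{itemize}
\item It involves only finitely many basis vectors, all of the form $v_a$ with $a\le m$ for some $m$.
\item For every $\lambda\ge m$, its specialisation at $s=q^{\lambda}$ is computed inside $W_\lambda^{\otimes 3}$ (resp.\ $W_\lambda^{\otimes 2}$). It therefore vanishes by the finite-dimensional theory. The scalar $q^{-\lambda^2/2}$ is irrelevant because both identities are homogeneous in $R$.
\item Viewed in $\Q(q)[s^{\pm1}]$, the coefficient is a Laurent polynomial vanishing at the infinitely many distinct points $q^{\lambda}$, hence it is zero.
\end{itemize}
Note that your final step needs this interpretation over the field $\Q(q)$: the evaluation points $q^{\lambda}$ are not constants.

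With this truncation inserted, your proposal is a complete proof.
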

\section{Maximal quotient rings}
In this section, we introduce various expressions of the  ideal $\widetilde\cI_{\cN}$ in $\Li$ to define the maximal quotient ring $\widetilde{\Li}_{\cN}$.  
\subsection{Defining ideals}
\begin{defn}[Condition ideal] 
\label{def:condition} 
Let us consider the following ideals in $\Liq$: 
%
%\begin{equation}
%\cI_{\cN}:=\Big( \varphi_\cN(q^2)\cdot (s^2q^{-2\cN+2}-1) \Big),
%\end{equation}
%
\begin{equation*}
\cJ_{\cN,k}:=\left( \varphi_k(q^2),\  \ \prod_{i=0}^{k-2}(s^2q^{-2i}-1) \right)
\end{equation*}
for $k| \cN$, $k \neq 1$, $2$, $\cN-1$, $\cN$ where $\varphi_k$ is the $k^{th}$ cyclotomic polynomial, and
\begin{equation*}
\widetilde{\cJ}_\cN = \cI_\cN \cap\!\!\!\! \bigcap_{\text{\scriptsize$\begin{matrix}
d | \cN\\[-1pt]
d\neq 1, \cN
\end{matrix}$}} \cJ_{\cN,d},  
\end{equation*}
where $\cI_\cN = \Big( \varphi_\cN(q^2)\cdot (s^2q^{-2\cN+2}-1) \Big)$ given by \eqref{qideal}.  
We call $\widetilde{\cJ}_\cN$ the ``condition ideal''.
\end{defn}
\begin{defn}[Maximal quotient ideal]
\label{def:maximal}
Let us consider as well the ideal
\begin{equation}\label{idealb}
\cC_\cN = \sum_{\substack{i,j=0 \\ j+n\geq \cN,0\leq  n\leq i}}^{\cN-1}
 \left( {n+j \brack j}_q \prod_{k=0}^{n-1}(sq^{-k-j}-s^{-1}q^{k+j}) \right) \subseteq \Liq_{\cN}
\end{equation}
and call it the ``maximal quotient ideal''.
\end{defn}
\subsection{Coincidence of defining ideals}
We  show that the three ideals $\widetilde{\cJ}_{\cN}$, $\cC_{\cN}$ and $\widetilde{\cI}_\cN$ are equal.  
\begin{prop}[Structural inclusion of condition ideals]\label{prop:EQ}
The  above three mentioned structural ideals coincide and they also coincide with $\widetilde\cI_\cN$, i.e.
\begin{equation}\label{eq:EQ}
\cC_\cN = \widetilde{\cJ}_{\cN} = \widetilde\cI_\cN. 
\end{equation}
\end{prop}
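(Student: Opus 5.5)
The plan is to prove the two inclusions $\cC_\cN\subseteq\widetilde\cJ_\cN$ and $\widetilde\cJ_\cN\subseteq\cC_\cN$ by factoring every generator into irreducibles of the UFD $\Liq$. Separately, I will show $\widetilde\cJ_\cN=\widetilde\cI_\cN$ by working one cyclotomic prime at a time. Two normalisations come first. Up to units of $\Liq$,
\[
\prod_{k=0}^{n-1}(sq^{-k-j}-s^{-1}q^{k+j}) \doteq \prod_{m=j}^{j+n-1}(s^2q^{-2m}-1),
\qquad
[m]_q\doteq \frac{q^{2m}-1}{q^2-1}=\prod_{d\mid m,\,d>1}\varphi_d(q^2).
\]
Hence the generator of $\cC_\cN$ indexed by $(n,j)$ is, up to a unit,
\[
g_{n,j}=\prod_{d\geq 2}\varphi_d(q^2)^{e_d(n,j)}\cdot \prod_{m=j}^{j+n-1}(s^2q^{-2m}-1),\qquad e_d=\Big\lfloor\tfrac{n+j}{d}\Big\rfloor-\Big\lfloor\tfrac{n}{d}\Big\rfloor-\Big\lfloor\tfrac{j}{d}\Big\rfloor\in\{0,1\}.
\]
Here $1\le n\le \cN-1$, $0\le j\le\cN-1$ and $n+j\ge\cN$; the index $i$ only forces $n\le\cN-1$. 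So $e_d(n,j)=1$ exactly when adding $n$ and $j$ in base $d$ produces a carry in the last digit.

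\textbf{Step 1} is the inclusion $\cC_\cN\subseteq\widetilde\cJ_\cN$. Since $\cN\le n+j<2\cN$ and $n,j<\cN$, we get $e_\cN=1$. The range $j\le m\le j+n-1$ contains $m=\cN-1$. Hence $\varphi_\cN(q^2)(s^2q^{-2(\cN-1)}-1)$ divides $g_{n,j}$, so $g_{n,j}\in\cI_\cN$. For a proper divisor $d\mid\cN$ there are two cases.
\begin{itemize}
\item If $e_d=1$, then $g_{n,j}\in(\varphi_d(q^2))\subseteq\cJ_{\cN,d}$.
\item If $e_d=0$, there is no carry. Since $n+j\ge\cN\ge 2d$, the consecutive integers $j,\dots,j+n-1$ then contain a block of residues mod $d$ covering those of $0,\dots,d-2$. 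Reducing modulo $\varphi_d(q^2)$, where $q^2$ becomes a primitive $d$-th root of unity, the $s$-part is therefore a multiple of $\prod_{i=0}^{d-2}(s^2q^{-2i}-1)$.
\end{itemize}
This gives $g_{n,j}\in\cJ_{\cN,d}$ in both cases.

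\textbf{Step 2} is $\widetilde\cJ_\cN=\widetilde\cI_\cN$. Over $\Q$ the polynomials $\varphi_d(q^2)$ for distinct $d$ have a nonzero rational resultant, so the ideals $\cJ_{\cN,d}$ for distinct $d$ are pairwise comaximal. Their intersection is therefore their product. It remains to identify each $\cJ_{\cN,d}$ with $\big(\varphi_d(q^2),\frac{s^{2d}-1}{s^2-1}\big)$ by comparing the factorisations of the two $s$-polynomials in $\Q(\zeta_d)[s^{\pm2}]$. This needs care: one must check which residue class is missing in each product, and possibly use that the $s$-normalisation of the Verma module makes the two agree up to units.

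\textbf{Step 3}, the inclusion $\widetilde\cJ_\cN\subseteq\cC_\cN$, is the main obstacle. I would prove it locally: $\Liq/\cC_\cN$ is supported on $V(\varphi_\cN(q^2)(s^2q^{-2\cN+2}-1))\cup\bigcup_{d}V(\varphi_d(q^2),\dots)$, and by the comaximality above it splits via the Chinese remainder theorem into pieces near each cyclotomic prime. It then suffices, for each $d\mid\cN$, to exhibit explicit generators generating the right local component.
\begin{itemize}
\item Choose $(n,j)=(\cN-1,1)$ or $(1,\cN-1)$. These have $e_\cN=1$ and $e_d=0$ for all proper $d$, and their $s$-parts are the short products $(s^2q^{-2}-1)\cdots(s^2q^{-2(\cN-1)}-1)$ and $(s^2q^{-2(\cN-1)}-1)$. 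Their gcd with the other generators should recover $\varphi_\cN(q^2)(s^2q^{-2\cN+2}-1)$ away from the primes $\varphi_d$ with $d$ a proper divisor.
\item Localising at $\varphi_d(q^2)$ for a proper divisor $d$, take pairs $(n,j)$ with $e_d=0$ and $s$-part exactly a block of $d-1$ residues. Combine these with pairs having $e_d=1$. Together they should generate $\cJ_{\cN,d}$ locally.
\end{itemize}
The delicate point is checking that no extra $s$-factors survive in the gcd's. I would handle it by a B\'ezout argument in $\Q(\zeta_d)[s^{\pm2}]$, which is a PID.
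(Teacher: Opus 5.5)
\textbf{Step 2 cannot be completed, because the identification you hesitate over is false.} With it, the second equality of the proposition fails.

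Work modulo $\varphi_d(q^2)$ and write $q^2=\zeta$ for a primitive $d$-th root of unity. Then $\prod_{i=0}^{d-2}(s^2q^{-2i}-1)$ vanishes at $s^2=\zeta^0,\dots,\zeta^{d-2}$ and misses $s^2=\zeta^{-1}$. By contrast, $\frac{s^{2d}-1}{s^2-1}$ misses $s^2=1$. The zero sets differ, so no monomial unit and no renormalisation can make the two ideals agree.

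Your own Step 1 shows that every element of $\cC_\cN$ vanishes at $(q^2,s^2)=(\zeta_d,1)$, for each proper divisor $d$ of $\cN$. Now set $f=\varphi_\cN(q^2)(s^2q^{-2\cN+2}-1)$. The element $f\cdot\frac{s^{2d}-1}{s^2-1}\prod_{d'\neq d}\varphi_{d'}(q^2)$ lies in $\widetilde\cI_\cN$ but is nonzero at that point. Hence $\cC_\cN\subsetneq\widetilde\cI_\cN$ for every composite $\cN$.

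For $\cN=4$ this is fully explicit:
\begin{itemize}
\item $f$ vanishes at $(q^2,s^2)=(-1,-1)$, so $f\in(q^2+1,s^2+1)$ and $\widetilde\cI_4=(f)$.
\item The six generators of $\cC_4$, indexed by $(n,j)\in\{(1,3),(2,2),(2,3),(3,1),(3,2),(3,3)\}$, all vanish at $(q^2,s^2)=(-1,1)$. Each has either a factor $q^2+1$ in its $q$-binomial or a factor $s^2q^{-4}-1$ or $s^2q^{-8}-1$.
\item But $f(-1,1)=-4$, so $f\in\widetilde\cI_4\setminus\cC_4$. In fact $\cC_4=f\cdot(q^2+1,s^2-1)$.
\end{itemize}

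The paper's proof slips at the same spot. Definition~\ref{def:condition} uses the generator $\prod_{i=0}^{d-2}(s^2q^{-2i}-1)$, and this is also what Step~I of Lemma~\ref{lem:EQ3} produces. Lemma~\ref{strcy} silently replaces it by $\prod_{i=1}^{d-1}(s^2q^{-2i}-1)$, and only the latter is congruent to $\frac{s^{2d}-1}{s^2-1}$ up to a unit. What actually survives is $\cC_\cN=\widetilde\cJ_\cN$, together with only the inclusion $\widetilde\cJ_\cN\subseteq\widetilde\cI_\cN$.

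\textbf{Steps 1 and 3, once repaired, do prove $\cC_\cN=\widetilde\cJ_\cN$, by a different route from the paper's.} The paper passes through the ideal $\cI'_\cN$ generated by the terms with $n+j=\cN$. It absorbs the terms with $n+j>\cN$ by $q$-Pascal induction and uses Lemma~\ref{lem:qi} for the $q$-Lucas step. Your carry count handles all generators uniformly. Three fixes are needed in Step 3.
\begin{itemize}
\item The pairs $(\cN-1,1)$ and $(1,\cN-1)$ have $e_d=1$ for every divisor $d>1$, not $e_d=0$, since their $q$-binomial is $[\cN]_q$. That is exactly why they help: $g_{1,\cN-1}$ is a unit times $f\prod_{1<d<\cN,\,d\mid\cN}\varphi_d(q^2)$. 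So $\cC_\cN=f\cdot\mathcal A$ with that product lying in $\mathcal A$, and the Chinese remainder theorem gives $\mathcal A=\bigcap_d\bigl(\mathcal A+(\varphi_d(q^2))\bigr)$.
\item The $d$-component comes from the single generator $(n,j)=(d,\cN-d)$, so no B\'ezout argument is needed. This pair has no carry. Its $q$-binomial divided by $\varphi_\cN(q^2)$ is a unit modulo $\varphi_d(q^2)$ over $\Q$. Once the factor of $f$ is removed, its $s$-part is congruent to $\prod_{i=0}^{d-2}(s^2q^{-2i}-1)$.
\item Modulo $\varphi_d(q^2)$, $f$ is a unit times $s^2\zeta-1$, whose root is precisely the omitted one. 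So $(f)$ is comaximal with each $\cJ_{\cN,d}$, and
$(f)\cap\bigcap_d\cJ_{\cN,d}=f\cdot\bigcap_d\cJ_{\cN,d}\subseteq f\cdot\mathcal A=\cC_\cN$.
\end{itemize}
As a by-product, $\Liq/\cC_\cN\cong\Li_\cN\times\prod_d\Liq/\cJ_{\cN,d}$.

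In Step 1, the block $j,\dots,j+n-1$ covers the residues $0,\dots,d-2$ for a different reason than you give. It is the no-carry condition together with $d\mid\cN$ that forces $n\ge d$; the bound $n+j\ge 2d$ alone does not.
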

The proof of this proposition is given in Appendix \ref{Ap}.  
\begin{coro}
If $\cN$ is prime, then 
\begin{equation}\label{eq:EQp}
\cC_\cN = \widetilde{\cJ}_{\cN}  = \widetilde\cI_\cN= \cI_\cN.
\end{equation}
\end{coro}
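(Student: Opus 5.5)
The corollary follows from Proposition \ref{prop:EQ} once we show that, for $\cN$ prime, the intersection factor in the definitions of $\widetilde{\cJ}_\cN$ and $\widetilde{\cI}_\cN$ is vacuous. The plan is to first use the proposition to reduce \eqref{eq:EQp} to a statement about index sets, then check that these index sets are empty.

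By Proposition \ref{prop:EQ} we already have $\cC_\cN = \widetilde{\cJ}_\cN = \widetilde{\cI}_\cN$ for every $\cN\geq 2$. It therefore suffices to prove $\widetilde{\cJ}_\cN = \cI_\cN$ (equivalently $\widetilde{\cI}_\cN=\cI_\cN$) when $\cN$ is prime.

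If $\cN$ is prime, its only divisors are $1$ and $\cN$. Hence the set $\{d : d\mid \cN,\ d\neq 1,\cN\}$ is empty. The intersection $\bigcap_{d} \cJ_{\cN,d}$ in Definition \ref{def:condition} is then an empty intersection of ideals. By the standard convention this is the whole ring $\Liq$, so $\widetilde{\cJ}_\cN = \cI_\cN\cap \Liq = \cI_\cN$.

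In the same way, the product $\prod_{d}\bigl(\varphi_d(q^2), \frac{s^{2d}-1}{s^2-1}\bigr)$ in \eqref{idealm} is an empty product of ideals, which is the unit ideal $\Liq$. Therefore $\widetilde{\cI}_\cN = \bigl(\varphi_\cN(q^2)(s^2q^{-2\cN+2}-1)\bigr) = \cI_\cN$, consistent with the remark in the introduction that $\widetilde{\Li}_\cN=\Li_\cN$ for $\cN$ prime. Combining this with Proposition \ref{prop:EQ} gives \eqref{eq:EQp}.

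There is no real obstacle here. The only point needing care is to state explicitly that the empty intersection and the empty product of ideals are taken to be $\Liq$, since the definitions implicitly rely on this convention. All the substantive work sits in Proposition \ref{prop:EQ}, proved in Appendix \ref{Ap}, which we take as given.
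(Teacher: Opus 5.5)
Your proposal is correct and is exactly the paper's argument: the corollary is stated without separate proof right after Proposition \ref{prop:EQ}. The proposition gives $\cC_\cN=\widetilde{\cJ}_\cN=\widetilde\cI_\cN$, and since a prime $\cN$ has no divisors $d\neq 1,\cN$, the empty intersection in $\widetilde{\cJ}_\cN$ and the empty product in $\widetilde\cI_\cN$ are $\Liq$, so both reduce to $\cI_\cN$.

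If you want the prime case independent of the Appendix, $\cC_\cN=\cI_\cN$ can be read off the generators directly. Each generator contains the factor $sq^{1-\cN}-s^{-1}q^{\cN-1}$, because $k+j=\cN-1$ occurs in the product when $n+j\geq\cN$. Each is also divisible by $\varphi_\cN(q^2)$, since $\varphi_\cN(q^2)$ divides ${n+j \brack j}_q$ whenever $\cN\le n+j<2\cN$. Conversely, the generator with $(j,n)=(\cN-1,1)$ is $[\cN]_q(sq^{1-\cN}-s^{-1}q^{\cN-1})$, which is a monomial times $\varphi_\cN(q^2)(s^2q^{-2(\cN-1)}-1)$.
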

\begin{rmk}
The equality \eqref{eq:EQ} holds when working over rational coefficients, this is not true if the ground ring is $\mathbb{Z}[q^{\pm1}, s^{\pm1}]$ instead of $\Liq=\mathbb{Q}[q^{\pm1}, s^{\pm1}]$.
On the other hand, the equality \eqref{eq:EQp} also holds over the ground ring $\mathbb{Z}[q^{\pm1}, s^{\pm1}]$.
\end{rmk}
\subsection{Maximal quotient ring at level $\cN$}
\begin{defn}[Quotient ring at non prime levels] 
\label{def:ring}
Let us consider the following quotient ring, and call it ``maximal quotient ring'':
\begin{equation*}
\widetilde{\Li}_{\cN}=\Q[q^{\pm2}, s^{\pm 2}] \ / 
{\widetilde{\cI}_\cN}=
\Q[q^{\pm2}, s^{\pm 2}] \ / 
{\cC_\cN}.
\end{equation*}
\end{defn}
\begin{rmk}
If $\cN$ is prime, then
\[
\widetilde\Li_\cN = \Li_\cN = \Li/\cI_\cN.
\]
\end{rmk}
\section{Level $\cN$ universal invariant}
Our aim is to obtain knot invariants at level $\cN$, in this case for any natural number $\cN$ (not necessary prime as in the previous sections). In the following sections we will see that if $\cN$ is not prime, we are able to obtain invariants in rings that are higher than the ring that we have defined for the prime case, which is $\Li_{\cN}$. 
\par
We do so by looking at the $\cN$-dimensional submodule arising from the generic Verma module, when specialised over this higher order quotient ring.
\begin{notation}($\cN$-dimensional submodule over the quotient ring)\label{mod2}
Let us consider the specialisation the $\cN$-dimensional submodule in the Verma module $V_{\cN}$ specialised over the level $\cN$ quotient ring as below: 
 $$\widetilde{V}_{\cN}:=\langle v_{0},...,v_{\cN-1}\rangle_{\widetilde{\Li}_{\cN}} \subseteq \hat{V}\mid_{\widetilde{\Li}_{\cN}}.$$
\end{notation}
Our aim is to show that we can obtain knot invariants in $\widetilde{\Li}_{\cN}$. In order to do so we are going to use braid group actions on tensor powers of $\widetilde{V}_{\cN}$. 
\subsection{Braid action over the quotient ring}\label{action2}
 First we will see that the specialised $R$-matrix at these higher rings leads to a well defined action onto $\widetilde{V}_{\cN}^{\otimes n}$.
\begin{prop}[Action on the higher level $\cN$ module over the quotient ring] 
The action $\hat{\rho}_n$ given by \eqref{eq:action} specialized to the level $\cN$ quotient ring $\widetilde\Li_\cN$ leads to a well defined induced braid group action:
$$ \widetilde{\rho}_{\cN,n}  :B_n \rightarrow \Aut_{\widetilde{\Li}_{\cN}} \left( \widetilde{V}_{\cN}^{\otimes n}\right). $$
\end{prop}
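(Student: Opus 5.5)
The plan is to show that the $R$-matrix of Proposition \ref{R:R}, restricted to indices $0\le i,j\le \cN-1$ and reduced modulo $\widetilde{\cI}_\cN$, sends $\widetilde V_\cN\otimes\widetilde V_\cN$ into itself, and that the same holds for $R^{-1}$. Once this is done, the braid relations and the relation $R\circ R^{-1}=\mathrm{Id}$ are inherited from the generic action $\hat\rho_n$. This inheritance uses that $\widetilde V_\cN^{\otimes n}$ becomes an invariant direct summand, spanned by basis vectors, of a suitable quotient of $\hat V^{\otimes n}\otimes_{\Li}\widetilde\Li_\cN$.

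More precisely, consider the $\widetilde\Li_\cN$-submodule $W\subseteq \hat V\otimes_{\Li}\widetilde\Li_\cN$ spanned by $v_m$ with $m\ge \cN$. I want to show that $\hat\rho_n$ maps $W$-containing tensors to $W$-containing tensors and preserves the span of the remaining vectors.

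\textbf{Step 1 ($R$).} In \eqref{eq:Rpositive}, $R(v_i\otimes v_j)$ with $i,j\le\cN-1$ has components $v_{j+n}\otimes v_{i-n}$, with $0\le n\le i$. The second index stays in range. The first index leaves the range exactly when $j+n\ge\cN$, and then the coefficient contains the factor
\[
{n+j \brack j}_q \prod_{k=0}^{n-1}(sq^{-k-j}-s^{-1}q^{k+j}),
\]
up to a unit. This factor is a generator of $\cC_\cN$ (Definition \ref{def:maximal}), hence vanishes in $\widetilde\Li_\cN=\Li/\cC_\cN$ by Proposition \ref{prop:EQ}. So $R$ preserves $\widetilde V_\cN^{\otimes2}$.

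\textbf{Step 2 ($R^{-1}$).} Formula \eqref{eq:Rnegative} produces $v_{j-n}\otimes v_{i+n}$, with coefficient ${n+i\brack i}_q\prod_{k=0}^{n-1}(sq^{-i-k}-s^{-1}q^{k+i})$ up to a unit and sign. When $i+n\ge\cN$ with $n\le j$, this is the same family of generators with the roles of $i$ and $j$ swapped, since the index set of $\cC_\cN$ is symmetric in this sense. Hence it also vanishes, and $R^{-1}$ preserves $\widetilde V_\cN^{\otimes 2}$.

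\textbf{Step 3 (invertibility and braid relations).} Invertibility needs care. On the full module, $R^{-1}R=\mathrm{Id}$. We have $R(\widetilde V_\cN^{\otimes2})\subseteq\widetilde V_\cN^{\otimes2}$, and $R^{-1}$ restricted to $\widetilde V_\cN^{\otimes2}$ also lands there. Therefore $R^{-1}\circ R$ restricted to $\widetilde V_\cN^{\otimes 2}$ is the identity, computed entirely inside the subspace, and likewise for $R\circ R^{-1}$. The braid relation $\sigma_i\sigma_{i+1}\sigma_i=\sigma_{i+1}\sigma_i\sigma_{i+1}$ holds on $\hat V^{\otimes n}\otimes\widetilde\Li_\cN$. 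Since every operator involved preserves $\widetilde V_\cN^{\otimes n}$, the restricted operators satisfy it as well. This gives $\widetilde\rho_{\cN,n}$ with values in $\Aut_{\widetilde\Li_\cN}(\widetilde V_\cN^{\otimes n})$.

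The main obstacle is purely the ideal-theoretic input: identifying the vanishing coefficients with the generators of $\cC_\cN$. This rests on Proposition \ref{prop:EQ}, which we may assume. The remaining subtlety is checking that the index range in \eqref{idealb} covers every overflowing term of both $R$ and $R^{-1}$. That check reduces to the substitution $i\leftrightarrow j$ noted in Step 2.
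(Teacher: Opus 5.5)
Your proof is correct and follows the paper's route: up to units, the overflow coefficients of $R$ are exactly generators of $\cC_\cN$, so they vanish in $\widetilde\Li_\cN=\Li/\cC_\cN$. Your Steps 2 and 3 (the $i\leftrightarrow j$ symmetry that handles $R^{-1}$, and obtaining invertibility and the braid relations by restricting to an invariant submodule) supply points the paper leaves implicit.

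Drop the framing with $W$, however. The span of tensors containing some $v_m$ with $m\ge\cN$ is \emph{not} preserved. For example, $R(v_\cN\otimes v_0)$ has the in-range component $v_1\otimes v_{\cN-1}$ with coefficient $s^{-\cN}q^{2(\cN-1)}(s-s^{-1})\neq 0$ in $\widetilde\Li_\cN$; already $Ev_\cN=v_{\cN-1}$. So $\widetilde V_\cN^{\otimes n}$ is an invariant submodule but not an invariant direct summand. None of Steps 1--3 uses that claim, so the argument stands without it.
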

\begin{proof}
 The condition that we want to prove is that the action $\widetilde{\rho}_{\cN,n}$ on $\hat{V}\mid_{\widetilde{\Li}_\cN}^{\otimes n}$ preserves the vector subspace $\widetilde{V}_{\cN}^{\otimes n}$. 
Following the definition of the action $\hat{\rho}_{n} $, it is enough prove that
the action of $R$ preserves the vector subspace ${\widetilde{V}_{\cN}^{\otimes 2}}$.
We will show the following property:
 $$
 R\mid_{\widetilde{\Li}_{\cN}}(v_i\otimes v_j) \in \widetilde{V}_{\cN}^{\otimes 2},  \ \ \ \forall i,j \in \{0,...,\cN-1\}.
 $$
Let $i,j \in \{0,...,\cN-1\}$. 
The $R$-matrix action is given by \eqref{eq:Rpositive} as follows:
 \begin{equation*} 
R(v_i\otimes v_j)= s^{-(i+j)} \sum_{n=0}^{i} q^{2(i-n)(j+n)}q^{\frac{n(n-1)}{2}} {n+j \brack j}_q \prod_{k=0}^{n-1}(sq^{-k-j}-s^{-1}q^{k+j}) \ v_{j+n}\otimes v_{i-n}.
 \end{equation*}
 We recall that we work over the quotient ring 
 $\widetilde{\Li}_{\cN}=
\Q[q^{\pm2}, s^{\pm 2}] \ / {\widetilde{\cI}_\cN}$, 
and this is identified with 
$\widetilde{\Li}_{\cN}=
\Q[q^{\pm2}, s^{\pm 2}] \ / {\cC_\cN}$ 
using Definition \ref{def:ring}. 
Let us also recall the definition of the ideal $\cC_\cN$ from relation \eqref{idealb}: 
\begin{equation*}
\cC_\cN= \sum_{\substack{i,j=0 \\ j+n\geq \cN,0\leq  n\leq i}}^{\cN-1}
 \left( {n+j \brack j}_q \prod_{k=0}^{n-1}(sq^{-k-j}-s^{-1}q^{k+j}) \right) \subseteq \Liq_{\cN}
\end{equation*}
Let 
$$
\widetilde{\pi}: \Liq \rightarrow \widetilde{\Li}_{\cN}=
\Q[q^{\pm2}, s^{\pm 2}] \ / {\cC_\cN}.
$$ 
be the canonical projection. 
We are going to prove that for any indices $i,j \leq \cN-1 \text { and } j+n\geq \cN$,  we have that:
\begin{equation*}
\widetilde{\pi} \left( {n+j \brack j}_q \prod_{k=0}^{n-1}(sq^{-k-j}-s^{-1}q^{k+j}) \right)=0 \in \widetilde{\Li}_{\cN}.
\end{equation*}
This is equivalent to:
\begin{equation*}
 {n+j \brack j}_q \prod_{k=0}^{n-1}(sq^{-k-j}-s^{-1}q^{k+j}) \in \cC_{\cN}
 \end{equation*}
if $i,j \leq \cN-1 \text { and } j+n\geq \cN$, {which holds following the definition of the ideal $\cC_{\cN}$}.
\par
In turn, this shows that the coefficient of $v_i\otimes v_j$ in $R(v_i\otimes v_j)$ vanishes if  $j+n\geq \cN$. So, the image of $ R(v_i\otimes v_j)|_{\widetilde{\Li}}$  is a linear combination of tensors with two components such that their indices are all bounded by  $\cN-1$. This shows precisely that the action coming from the 2-variable $R$-matrix preserves the module $\widetilde{V}_{\cN}^{\otimes 2}$ (once specialised to the quotient $\widetilde{\Li}_{\cN}$) and concludes the proof.
\end{proof}
\begin{rmk}
Following the above proof we notice that %actually 
the subset $\left.\left<v_0, \cdots\!, v_{\cN-1}\right>\right|{\widetilde\Li_\cN}$ of $\hat V\mid_{\widetilde\Li_\cN}$ is also invariant under the action of the quantum group $U_q(sl_2)$.  
\end{rmk}
\subsection{Unique quantum trace at level $\cN$} 
We recall that our aim is to obtain knot invariants. In order to construct them our strategy will be to make use of the braid group action on tensor powers of the generic Verma module defined above. Our construction will be split into two main parts. {So far, we considered} a certain quotient ring such that we have a well defined braid action once we specialise the finite part from the Verma module over this ring. Now, once we have such an action, we are going to construct a unique quantum trace, which in turn will provide our knot invariants. 
\begin{defn}
A quantum partial trace is a map
$$
\qptr_{\widetilde{V}_{\cN}^{\otimes n}}:\operatorname{End}(\widetilde{V}_{\cN}^{\otimes (n+1)})\rightarrow \operatorname{End}(\widetilde{V}_{\cN}^{\otimes n})
$$
a map which has the form
\[
\qptr_{\widetilde{V}_{\cN}^{\otimes n}}(f)_{v_{i_1,i_2, \cdots, i_n}}^{v_{j_1,j_2, \cdots, j_n}}
:=\sum_{j=0}^{\cN-1}
g_j(q, s) \cdot f_{v_{i_1,i_2, \cdots, i_n,j}}^{v_{j_1,j_2, \cdots, j_n,j}}
\]
and which satisfies
\begin{equation}
\qptr_{\widetilde{V}_{\cN}^{\otimes n}}(\widetilde\rho_{\cN,{n+1}}(\sigma_n\beta_n)) 
=
a \widetilde\rho_{\cN,n}(\beta_n), \quad
\qptr_{\widetilde{V}_{\cN}^{\otimes n}}(\widetilde\rho_{\cN,{n+1}}(\sigma_n^{-1}\beta_n) )
=
a^{-1} \widetilde\rho_{\cN,n}(\beta_n)
\label{eq:qtcondition}
\end{equation}
 for {any }$\beta_n \in B_n$ and a non-zero element $a \in \widetilde\Li_\cN$.  
\end{defn}
\begin{thm}[Unique level $\cN$ quantum trace]\label{thm:uqtr1}
There exists a unique quantum trace $\qptr_{V_{\cN}^{\otimes n}}$, up to a scalar multiple, 
for the level $\cN$ sequence of braid group representations 
  $
 \{\widetilde\rho_{\cN,n}\mid n \in \N \}
 $.
 In other words, {for any choice of a quantum trace associated to $\widetilde{V}_{\cN}^{\otimes n}$} there is a non-zero element $g_0(q, s)\in \Li_\cN$ such that
\[
\qptr_{\widetilde{V}_{\cN}^{\otimes n}}(f)_{v_{i_1,i_2, \cdots, i_n}}^{v_{j_1,j_2, \cdots, j_n}}
:=\sum_{j=0}^{\cN-1}
g_0(q, s)q^{-2i} \cdot f_{v_{i_1,i_2, \cdots, i_n,j}}^{v_{j_1,j_2, \cdots, j_n,j}}.
\]
\end{thm}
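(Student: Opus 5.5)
Both existence and uniqueness will come from testing the defining conditions \eqref{eq:qtcondition} on the simplest braids, namely $\beta_n=1$ in $B_n$. Since $\widetilde\rho_{\cN,n+1}(\sigma_n\beta_n)=(\mathrm{Id}^{\otimes(n-1)}\otimes R)\circ(\widetilde\rho_{\cN,n}(\beta_n)\otimes \mathrm{Id})$ and the partial trace only contracts the last tensor factor, we have
\[
\qptr(\widetilde\rho_{\cN,n+1}(\sigma_n\beta_n))=\qptr(\mathrm{Id}^{\otimes(n-1)}\otimes R)\circ \widetilde\rho_{\cN,n}(\beta_n).
\]
So the whole condition reduces to the single identities
\[
\qptr_{\widetilde V_\cN}(R^{\pm1}) = a^{\pm1}\,\mathrm{Id}_{\widetilde V_\cN}
\]
for the one-strand partial trace. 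Here $\qptr_{\widetilde V_\cN}$ is the map $\operatorname{End}(\widetilde V_\cN^{\otimes 2})\to\operatorname{End}(\widetilde V_\cN)$, i.e.\ the case $n=1$. I will first establish this reduction.

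Next I compute the matrix entries using \eqref{eq:Rpositive}. The coefficient of $v_{j'}\otimes v_j$ in $R(v_i\otimes v_j)$ requires $j+n=j'$ and $i-n=j$, so the relevant entries are
\[
R^{v_{i+m}\otimes v_j}_{v_i\otimes v_{j+m}}\quad\text{with } n=m .
\]
For the diagonal entries ($i=j'$) we need $n=i-j=j'-j$. Thus
\[
\qptr(R)^{v_{i'}}_{v_i}=\sum_j g_j\,R^{v_{i'}\otimes v_j}_{v_i\otimes v_j}
\]
picks out the term with $n=i-j$ and $j+n=i'$, which forces $i'=i$. Hence $\qptr(R)$ is automatically diagonal, and its $i$-th diagonal entry is
\[
\sum_{j=0}^{i} g_j\, s^{-(i+j)}q^{2j i}q^{\frac{(i-j)(i-j-1)}{2}}{i\brack j}_q\prod_{k=0}^{i-j-1}(sq^{-k-j}-s^{-1}q^{k+j}).
\]
The requirement that these $\cN$ entries all equal $a$ gives a triangular system: the $i=0$ equation gives $a=g_0$, and the $i$-th equation determines $g_i$ from $g_0,\dots,g_{i-1}$, because the coefficient of $g_i$ is the unit $s^{-2i}q^{2i^2}$. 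This immediately gives uniqueness up to the scalar $g_0$.

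The remaining task is to show that the solution is $g_j=g_0q^{-2j}$, which is where an actual identity enters. I would verify this through the $q$-binomial / quantum Gaussian identity
\[
\sum_{j=0}^{i} q^{-2j}s^{-(i+j)}q^{2ij}q^{\binom{i-j}{2}}{i\brack j}_q\prod_{k=0}^{i-j-1}(sq^{-k-j}-s^{-1}q^{k+j})=1 .
\]
This should hold generically in $\Liq$, so that it also holds in the quotient. The conceptual route is that over $\Liq$ the element $K^{-1}$, twisted suitably, is the standard pivotal element, with the Verma-module weights $sq^{-2j}$; the constant $s^{-1}$ is absorbed into $g_0$. The standard ribbon argument then shows that the quantum trace of $R$ is a scalar, namely the inverse twist $\theta^{-1}$ on $\hat V$. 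If I can only prove the identity by direct computation, I would proceed by induction on $i$ via the $q$-Pascal rule. After that I still need to check the $R^{-1}$ condition, namely that $\qptr(R^{-1})=a^{-1}$. This follows analogously from \eqref{eq:Rnegative}: here only $n=0$ contributes to the diagonal entry in the right way, by the same triangularity.

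Where I expect difficulty is the passage from $\hat V$ to the truncation $\widetilde V_\cN$. The generic scalar identity is for the trace over the infinite module, whereas we sum only over $j\le\cN-1$. However, the triangular structure shows that the $i$-th equation involves only $j\le i\le\cN-1$, so truncation loses nothing. Over the quotient ring, uniqueness still holds because the leading coefficients are units in $\widetilde\Li_\cN$. Finally, $a=g_0$ (times $s^{-i}$-type normalisation factors) must be nonzero and invertible, which holds if $g_0$ is chosen to be a unit, such as the monomial $s^{-(\cN-1)}q^{\cN-1}$ used later.
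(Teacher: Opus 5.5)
Your reduction to $\qptr_{\widetilde V_\cN}(R^{\pm1})=a^{\pm1}\mathrm{Id}$ matches the paper, and so do the observation that $\qptr(R)$ is automatically diagonal and the triangular-system argument for uniqueness (the leading coefficient $R^{v_{i,i}}_{v_{i,i}}=s^{-2i}q^{2i^2}$ is a unit). Your identity $\sum_{j\le i}q^{-2j}R^{v_{i,j}}_{v_{i,j}}=1$ is equivalent to Lemma~\ref{lem:qptr}, and the $q$-Pascal induction you propose does go through.

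The gap is the $\sigma_n^{-1}$ half of existence. Your claim that it follows ``by the same triangularity'' with ``only $n=0$'' contributing is false. By \eqref{eq:Rnegative}, the $n$-th term of $R^{-1}(v_i\otimes v_j)$ is a multiple of $v_{j-n}\otimes v_{i+n}$. So the diagonal entry $(R^{-1})^{v_{i,j}}_{v_{i,j}}$ comes from $n=j-i$ and is nonzero only for $j\geq i$. Thus
\[
\qptr(R^{-1})^{v_i}_{v_i}=\sum_{j=i}^{\cN-1}g_0q^{-2j}\,(R^{-1})^{v_{i,j}}_{v_{i,j}} .
\]
The triangularity is reversed, and the cut-off at $\cN-1$ genuinely enters. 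Once the $g_j$ are fixed by the $R$-equations, these are $\cN$ further constraints on already determined quantities, and nothing ``analogous'' supplies them. Your remark that ``truncation loses nothing'' holds for $R$ but fails exactly here. The required identity is false in $\Liq$, and on $\hat V$ the sum would not even terminate, so your ribbon heuristic gives nothing. Already for $\cN=2$, the $i=0$ and $i=1$ entries are $g_0(1+q^{-2}-s^2q^{-2})$ and $g_0s^2q^{-4}$. Their difference $g_0q^{-2}\varphi_2(q^2)(1-s^2q^{-2})$ vanishes only modulo $\cI_2$. So this step needs an argument carried out in the quotient ring, and your proposal does not provide one.

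Second, this condition is what fixes the normalisation, which you treat as free. At $i=\cN-1$ only $j=\cN-1$ survives, giving $\qptr(R^{-1})^{v_{\cN-1}}_{v_{\cN-1}}=g_0s^{2(\cN-1)}q^{-2\cN(\cN-1)}$, while $a=\qptr(R)^{v_0}_{v_0}=g_0$. Requiring the former to equal $a^{-1}$ forces $g_0^2=s^{-2(\cN-1)}q^{2\cN(\cN-1)}$. Hence $g_0$ is not an arbitrary unit, and the value $s^{-(\cN-1)}q^{\cN-1}$ you suggest does not satisfy \eqref{eq:qtcondition}. The paper's proof ends with exactly this computation and takes $g_0=a=s^{-\cN+1}q^{\cN(\cN-1)}$.

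For the $\sigma_n^{-1}$ identity itself, the paper only invokes as well known that the weights $g_0q^{-2j}$ satisfy both conditions. If you want to prove it directly, you must do so over $\widetilde\Li_\cN$, not generically.
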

\begin{proof}
First of all, it is well-known that if $g_j(q, s) = g_0(q, s)q^{-2i}$, then $g_j(q, s)$ satisfies the condition \eqref{eq:qtcondition}.

So we only have to prove that $g_j(q, s)$ must be $g_0(q, s)q^{-2i}$.  
Since the quantum partial trace of 
$\widetilde\rho_{\cN,n}(\sigma_n^{\pm1}\beta_n)$ 
is given by
\begin{multline*}
\qptr_{\widetilde{V}_{\cN}^{\otimes n}}\left(\widetilde\rho_{\cN,n}(\sigma_n^{\pm1}\beta_n)\right)_{v_{i_1,i_2, \cdots, i_n}}^{v_{j_1,j_2, \cdots, j_n}}
=
\\
=\sum_{j=0}^{\cN-1}g_j(s,q) \left(\widetilde\rho_{\cN,n}(\sigma_n^{\pm1}\beta_n)\right)_{v_{i_1,i_2, \cdots, i_n,j}}^{v_{i_1,i_2, \cdots, i_n,j}}
=
\\
=\sum_{j=0}^{\cN-1}g_j(s,q) \cdot 
\sum_{k=0}^{\cN-1}
(id^{\otimes n-1} \otimes R^{\pm1})_{v_{j_1, \cdots, j_{n-1},k,j}}^{v_{j_1, \cdots, j_{n-1},j_n,j}}
(\widetilde\rho_{\cN,n}(\beta_n)\otimes id)_{v_{i_1, \cdots,i_{n-1}, i_n,j}}^{v_{j_1, \cdots, j_{n-1},k,j}}
=
\\
=\sum_{j=0}^{\cN-1}g_j(s,q) \cdot 
\sum_{k=0}^{\cN-1}
(R^{\pm1})_{k,j}^{v_{j_n,j}}
\widetilde\rho_{\cN,n}(\beta_n)_{v_{i_1, \cdots,i_{n-1}, i_n}}^{v_{j_1, \cdots, j_{n-1},k}}
=
\\
=\sum_{j=0}^{\cN-1}g_j(s,q) \cdot 
(R^{\pm1})_{v_{j_n,j}}^{v_{j_n,j}}
\widetilde\rho_{\cN,n}(\beta_n)_{v_{i_1, \cdots,i_{n-1}, i_n}}^{v_{j_1, \cdots, j_{n-1},j_n}}
=
\\
=\left(\sum_{j=0}^{\cN-1}g_j(s,q) \cdot 
(R^{\pm1})_{v_{j_n,j}}^{v_{j_n,j}}
\right)
\rho_{\cN,n}(\beta_n)_{v_{i_1, \cdots,i_{n-1}, i_n}}^{v_{j_1, \cdots, j_{n-1},j_n}},
\end{multline*}
we only have to show that $\sum_{j=0}^{\cN-1}g_j(s,q) \cdot 
(R^{\pm1})_{v_{i,j}}^{v_{i,j}}$ is a non-zero scalar independent of $i$.  
\par
We see $\sum_{j=0}^{\cN-1}g_j(s,q) \cdot 
R_{v_{i,j}}^{v_{i,j}}$ to show that $g_j(s, q)$ must be $g_0(q,s) q^{-2j}$
by an induction on $M \in \{0,\dots,\cN\}$. 
To show
\begin{equation*}
g_i(q,s)=q^{-2i} \cdot g_0(q,s),\quad  \forall \  0\leq  i \leq M,
\end{equation*}
we use the fact
\begin{equation}
\sum_{j=0}^{M}q^{-2j} \cdot 
\left(R_{v_{M,j}}^{v_{M,j}}
-
R_{v_{M+1,j}}^{v_{M+1,j}}\right)
=
q^{-2(M+1)}R_{v_{M+1,M+1}}^{v_{M+1,M+1}}
\label{eq:sum}
\end{equation}
given in the following Lemma \ref{lem:qptr}.  
Now we prove that, if the sum  $\sum_{j=0}^{\cN-1}g_j(s,q) \cdot 
R_{v_{i,j}}^{v_{i,j}}$ is not depend on $i$, then
\[
\text{P(M):} \qquad g_j(q, s) = g_0(q, s) q^{-2j}\quad 
\text{for $0 \leq j \leq M$}
\]
holds by induction on $M$.  
 \par
For the case $M=0$, we have 
\begin{multline*}
0 = g_0(q,s) \cdot 
R_{v_{0,0}}^{v_{0,0}} - g_0(q,s) \cdot 
R_{v_{0,1}}^{v_{0,1}}- g_1(q,s) \cdot 
R_{v_{1,1}}^{v_{1,1}}
=
(g_0(q,s) q^{-2} - g_1(q, s)) R_{v_{1,1}}^{v_{1,1}}
\end{multline*}
by \eqref{eq:sum} and we get $g_1(q,s) = q^{-2}$.  
\par
Assuming that P(M) is true, then 
  \begin{multline*}
0 = \sum_{j=0}^{M}g_j(q, s) \cdot 
R_{v_{M,j}}^{v_{M,j}}
-
\sum_{j=0}^{M+1}g_j(q, s) \cdot 
R_{v_{M+1,j}}^{v_{M+1,j}}
=
\\
\sum_{j=0}^{M}g_0(q, s)q^{-2j} \cdot 
R_{v_{M,j}}^{v_{M,j}}
-
\sum_{j=0}^{M}g_0(q, s)q^{-2j} \cdot 
R_{v_{M+1,j}}^{v_{M+1,j}}
- g_{M+1}(q, s) R_{v_{M+1,M+1}}^{v_{M+1,M+1}}
=
\\
(g_0(q,s)q^{-2(M+1)} 
- g_{M+1}(q, s) ) R_{v_{M+1,M+1}}^{v_{M+1,M+1}}.
\end{multline*}
Hence $g_{M+1}(q, s) = q_0(q, s) q^{-2(M+2)}$ and P(M+1) is true.  
\par
We have
\[\sum_{j=0}^{0}g_0(q, s)q^{-2j} \cdot 
R_{v_{0,j}}^{v_{0,j}}=
g_0(q,s)R_{v_{0,0}}^{v_{0,0}}
= g_0(q,s),
\]
and
\[
\sum_{j=0}^{\cN-1}g_0(q, s)q^{-2j} \cdot 
(R^{-1})_{v_{\cN-1,j}}^{v_{\cN-1,j}}
=
g_0(q, s){q^{-2(\cN-1)}}(R^{-1})_{v_{\cN-1,\cN-1}}^{v_{\cN-1,\cN-1}}
=
s^{2\cN-2}q^{{-2\cN(\cN-1)}}. 
\]
Therefore, by putting $g_{0}(q, s) = s^{-\cN+1}q^{{\cN(\cN-1)}}$,
we get \eqref{eq:qtcondition} with $a = s^{-\cN+1}q^{{\cN(\cN-1)}}$.  
\end{proof}
\begin{rmk}
The matrix element $R_{v_{M+1,M+1}}^{v_{M+1,M+1}}$ is  invertible  in $\Li$, so we also get $g_{M+1}(q, s) = q_0(q, s) q^{-2(M+2)}$ in $\widetilde \Li_\cN$ and $\Li_\cN$.
\end{rmk}
\begin{lem}\label{lem:qptr}
The following holds for any $M \in \{0,\dots,\cN-2\}$:
\begin{equation}
\sum_{j=0}^{M}q^{-2j} \cdot 
\left(R_{v_{M,j}}^{v_{M,j}}
-
R_{v_{M+1,j}}^{v_{M+1,j}}\right)
=
q^{-2(M+1)}R_{v_{M+1,M+1}}^{v_{M+1,M+1}}.
\label{eq:sum1}
\end{equation}
\end{lem}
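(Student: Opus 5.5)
The plan is to avoid a term-by-term comparison and instead derive \eqref{eq:sum1} from the stronger statement that the weighted partial sums
\[
c_i:=\sum_{j=0}^{\cN-1} q^{-2j}\, R_{v_{i,j}}^{v_{i,j}}
\]
do not depend on $i$.

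\textbf{Step 1: triangularity of the diagonal entries.} By \eqref{eq:Rpositive}, $R(v_i\otimes v_j)$ is a combination of the vectors $v_{j+n}\otimes v_{i-n}$ with $0\leq n\leq i$. Here I read $R_{v_{i,j}}^{v_{i,j}}$ as the coefficient of $v_i\otimes v_j$, i.e. of the braiding (flip composed with the $R$-matrix) as it is used in $\widetilde\rho_{\cN,n}$. For this coefficient to be nonzero we need $j+n=i$, so $n=i-j\geq 0$. Hence $R_{v_{i,j}}^{v_{i,j}}=0$ for $j>i$, and for $j\leq i$ it equals
\[
s^{-(i+j)}q^{2ij}q^{\frac{(i-j)(i-j-1)}{2}}{i \brack j}_q\prod_{k=0}^{i-j-1}(sq^{-k-j}-s^{-1}q^{k+j}).
\]
It follows that $c_M=\sum_{j=0}^{M}q^{-2j}R_{v_{M,j}}^{v_{M,j}}$. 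Subtracting the analogous expression for $c_{M+1}$ shows that \eqref{eq:sum1} is exactly the statement $c_M=c_{M+1}$.

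\textbf{Step 2: $c_i$ is independent of $i$.} I would prove this over $\Liq$, on the full Verma module $\hat V$, and only then project to the quotient rings. Consider the endomorphism of $\hat V$ given by the right partial quantum trace of the braiding $c=\tau\circ R$ over the second factor, weighted by the pivotal element. Up to the overall invertible constant coming from $K^{-1}v_j=s^{-1}q^{2j}v_j$, its diagonal weights are $q^{\mp2j}$; I will fix the sign convention so that they are $q^{-2j}$, which is what the formula for $g_j$ in Theorem \ref{thm:uqtr1} suggests.

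By Step 1, only the terms with $j\leq i$ contribute, so this partial trace is a finite sum and is well defined on the infinite-dimensional module. Standard ribbon-category arguments show it commutes with the $U_q(sl_2)$-action; it is essentially the inverse twist $\theta^{-1}$. Since $\hat V$ is generated by $v_0$ as a highest weight module, it acts by the scalar computed on $v_0$. Moreover, the trace of a weight-preserving map on a weight module gives the diagonal entries $c_i$ as its eigenvalues. Hence $c_i=c_0=R_{v_{0,0}}^{v_{0,0}}=1$ for all $i$.

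\textbf{Main obstacle and fallback.} The main obstacle is justifying Step 2 rigorously: checking the pivotal convention so that the weights are $q^{-2j}$ rather than $q^{2j}$, and checking that the infinite Verma module argument is legitimate with the divided-power algebra over $\Liq$. If this becomes delicate, I would instead verify $c_{M+1}=c_M$ directly from the explicit formula in Step 1. After the substitution $n=M-j$, this is a finite identity in $q$ and $s$, and I would prove it as a $q$-Chu--Vandermonde / $q$-binomial-theorem identity of the form $\sum_j {M\brack j}_q\,(\text{product})=\text{const}$, by induction on $M$ using the $q$-Pascal recursion ${M+1\brack j}_q=q^{\pm j}{M\brack j}_q+q^{\mp(M+1-j)}{M\brack j-1}_q$.
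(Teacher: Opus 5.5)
Your Step 1 reads $R_{v_{i,j}}^{v_{i,j}}$ exactly as the paper does. Rewriting \eqref{eq:sum1} as $c_M=c_{M+1}$, with $c_i=\sum_{j=0}^{i}q^{-2j}R_{v_{i,j}}^{v_{i,j}}$, is correct. In fact $c_i=1$ for every $i$; for example $c_1=(1-s^{-2})+s^{-2}$.

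From here on your route differs from the paper's. The paper never identifies $c_i$; it proves \eqref{eq:sum1} by direct computation:
\begin{itemize}
\item it starts from $R_{v_{M,0}}^{v_{M,0}}-R_{v_{M+1,0}}^{v_{M+1,0}}$ and adds the $j$-th difference one index at a time;
\item each partial sum collapses to a single term: a monomial in $s,q$, times ${M\brack j}_q$, times a product of factors $sq^{-m}-s^{-1}q^{m}$;
\item it writes this out for $j\leq 3$ and asserts the pattern by induction, ending at $s^{-2M-2}q^{2M^2+2M}=q^{-2M-2}R_{v_{M+1,M+1}}^{v_{M+1,M+1}}$.
\end{itemize}
Your argument explains \emph{why} the identity holds: the weighted partial trace of the braiding is the twist, which is central. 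It also gives the sharper statement $c_i\equiv 1$, which is what underlies the ``well-known'' existence part of Theorem~\ref{thm:uqtr1}. The paper's computation is elementary and stays inside $\Liq$, but its inductive step is only illustrated.

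Step 2 is not yet a proof, for the reasons you suspected.
\begin{itemize}
\item The ribbon argument needs a coevaluation $\Liq\to\hat V\otimes\hat V^{*}$, which does not exist for the infinite-rank module $\hat V$.
\item ``$\hat V$ is generated by $v_0$'' is false over $\Liq$: $F^{(n)}v_0$ is a non-unit multiple of $v_n$. Commuting with $E$, where $Ev_i=v_{i-1}$, gives $c_i=c_{i-1}$ directly and avoids this.
\item The sign of the pivotal weight is not yours to choose; it is forced by $S^2$. With weights $q^{2j}$ one gets $R_{v_{1,0}}^{v_{1,0}}+q^{2}R_{v_{1,1}}^{v_{1,1}}=1-s^{-2}+q^4s^{-2}\neq 1$.
\end{itemize}

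A clean repair is to specialise. For each $N>i$, set $s=q^{N-1}$.
\begin{itemize}
\item Then $v_0,\dots,v_{N-1}$ span the $N$-dimensional simple module, where the standard finite-dimensional argument is rigorous.
\item Since $R_{v_{i,j}}^{v_{i,j}}=0$ for $j>i$, the partial trace it controls is exactly a constant times $c_i|_{s=q^{N-1}}$.
\item Hence $c_i(q,q^{N-1})=1$ for infinitely many $N$. This forces $c_i\equiv1$ as a Laurent polynomial in $s$ over $\Q(q)$.
\end{itemize}

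Your fallback also closes the gap quickly. Put $Q=q^2$ and $x=s^{-2}$, and use ${i\brack j}_q=q^{-j(i-j)}\binom{i}{j}_{Q}$. Then
$$q^{-2j}R_{v_{i,j}}^{v_{i,j}}=x^{j}Q^{j(j-1)}\binom{i}{j}_{Q}\prod_{m=j}^{i-1}(1-xQ^{m}).$$
The Pascal rule $\binom{i}{j}_Q=\binom{i-1}{j}_Q+Q^{i-j}\binom{i-1}{j-1}_Q$ then gives
$$c_i(x)=(1-xQ^{i-1})\,c_{i-1}(x)+xQ^{i-1}\,c_{i-1}(xQ),$$
so $c_i\equiv 1$ by induction from $c_0=1$.
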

\begin{proof}
Recall that
\[
R_{v_{i,j}}^{v_{i,j}}
=
s^{-(i+j)}q^{\frac{(i+j)^2-i+j}{2}}
{i \brack j}_q \prod_{k=0}^{i-j-1}(sq^{-k-j}-s^{-1}q^{k+j}).
\]
Hence we have
\begin{multline*}
R_{v_{M,0}}^{v_{M,0}}
-
R_{v_{M+1,0}}^{v_{M+1,0}}
=
s^{-M}q^{\frac{M^2-M}{2}}
{M \brack 0}_q \prod_{k=0}^{M-1}(sq^{-k}-s^{-1}q^{k})
-
\\
s^{-(M+1)}q^{\frac{M^2-M+2M}{2}}
{M+1 \brack 0}_q \prod_{k=0}^{M}(sq^{-k}-s^{-1}q^{k})
\\
=
s^{-M}q^{\frac{M^2-M}{2}}  \left(
1-1+s^{-2}q^{2M}
\right)
 \prod_{k=0}^{M-1}(sq^{-k}-s^{-1}q^{k})
\\
=
s^{-M-2}q^{\frac{M^2+3M}{2}} 
 \prod_{k=0}^{M-1}(sq^{-k}-s^{-1}q^{k}),
\end{multline*}
\begin{multline*}
R_{v_{M,0}}^{v_{M,0}}
-
R_{v_{M+1,0}}^{v_{M+1,0}}
-
q^{-2} R_{v_{M+1,1}}^{v_{M+1,1}}
=
\\
s^{-M-2}q^{\frac{M^2+3M}{2}} 
 \prod_{k=0}^{M-1}(sq^{-k}-s^{-1}q^{k})
 -
 \\
 s^{-M-2}q^{\frac{M^2 + 3M}{2}}
 {M+1 \brack 1}_q 
  \prod_{k=0}^{M-1}(sq^{-k-1}-s^{-1}q^{k+1})
=
\\
s^{-M-2}q^{\frac{M^2+3M}{2}} 
\left(
(s-s^{-1})
 -
\frac{\{M+1\}}{\{1\}}
 (sq^{-M}-s^{-1}q^{M})\right)
 \prod_{k=1}^{M-1}(sq^{-k}-s^{-1}q^{k})
=
\\
s^{-M-2}q^{\frac{M^2+3M}{2}} 
\times\hfill
\\
\frac{1}{\{1\}} 
\left((q-q^{-1})
(s-s^{-1})
 -
(q^{M+1} - q^{-M-1})
 (sq^{-M}-s^{-1}q^{M})
 \right)
 \prod_{k=1}^{M-1}(sq^{-k}-s^{-1}q^{k})
=
\\
-s^{-M-2}q^{\frac{M^2+3M}{2}} 
 {M \brack 1}_q 
\left(sq^{-M-1} - 
s^{-1}q^{M+1}
 \right)
 \prod_{k=0}^{M-2}(sq^{-k-1}-s^{-1}q^{k+1}), 
\end{multline*}
\begin{multline*}
R_{v_{M,0}}^{v_{M,0}}
-
R_{v_{M+1,0}}^{v_{M+1,0}}
-
q^{-2} (R_{v_{M+1,1}}^{v_{M+1,1}}
-
R_{v_{M,1}}^{v_{M,1}})
=
\\
-s^{-M-2}q^{\frac{M^2+3M}{2}} 
 {M \brack 1}_q 
\left(sq^{-M-1} - 
s^{-1}q^{M+1}
 \right)
 \prod_{k=0}^{M-2}(sq^{-k-1}-s^{-1}q^{k+1})
 +
 \\
s^{-M-1}q^{\frac{M^2+M-2}{2}}
 {M \brack 1}_q 
  \prod_{k=0}^{M-2}(sq^{-k-1}-s^{-1}q^{k+1})  
=
\\
-s^{-M-2}q^{\frac{M^2+M-2}{2}} 
 {M \brack 1}_q 
\left(q^{M+1}(sq^{-M-1} - 
s^{-1}q^{M+1}
 )
 -
 s
\right)
\prod_{k=0}^{M-2}(sq^{-k-1}-s^{-1}q^{k+1})
=
\\
s^{-M-3}q^{\frac{M^2+5M+2}{2}} 
 {M \brack 1}_q 
 \prod_{k=0}^{M-2}(sq^{-k-1}-s^{-1}q^{k+1}),
 \end{multline*}
 \begin{multline*}
R_{v_{M,0}}^{v_{M,0}}
-
R_{v_{M+1,0}}^{v_{M+1,0}}
-
q^{-2} (R_{v_{M+1,1}}^{v_{M+1,1}}
-
R_{v_{M,1}}^{v_{M,1}})
-
q^{-4} (R_{v_{M+1,2}}^{v_{M+1,2}}
-
R_{v_{M,2}}^{v_{M,2}})
=
\\
s^{-M-4}q^{\frac{M^2+7M+6}{2}} 
 {M \brack 2}_q 
 \prod_{k=0}^{M-3}(sq^{-k-1}-s^{-1}q^{k+1}),
 \end{multline*}
 \begin{multline*}
R_{v_{M,0}}^{v_{M,0}}
-
R_{v_{M+1,0}}^{v_{M+1,0}}
-
q^{-2} (R_{v_{M+1,1}}^{v_{M+1,1}}
-
R_{v_{M,1}}^{v_{M,1}})
-
q^{-4} (R_{v_{M+1,2}}^{v_{M+1,2}}
-
R_{v_{M,2}}^{v_{M,2}})
-
q^{-6} (R_{v_{M+1,3}}^{v_{M+1,3}}
-
R_{v_{M,3}}^{v_{M,3}})
=
\\
s^{-M-5}q^{\frac{M^2+9M+12}{2}} 
 {M \brack 3}_q 
 \prod_{k=0}^{M-4}(sq^{-k-1}-s^{-1}q^{k+1}).
 \end{multline*}
Following an inductive argument, we deduce this property for any natural number $M$, $M \leq \cN-2$. Then we get
\begin{equation*}
\sum_{j=0}^M
q^{-2j}\left(
R_{v_{M,j}}^{v_{M,j}}
-
R_{v_{M+1,j}}^{v_{M+1,j}}
\right)
=
s^{-2M-2}q^{\frac{4M^2+4M}{2}} 
=
q^{-2M-2}
R_{v_{M+1,M+1}}^{v_{M+1,M+1}}.
\end{equation*}
\end{proof}
\subsection{Level $\cN$ unified invariant}
For the next part of our construction we want to use the uniquely defined quantum traces on these quotient modules together with the induced braid group actions that come from the generic R-matrix in order to obtain knot invariants. 
We recall that we are working over the following quotient ring defined in \eqref{qring}:
\begin{equation*}
\widetilde\Li_{\cN}=\Q[q^{\pm2}, s^{\pm 2}] \ / \widetilde\cI_\cN.
\end{equation*}
\begin{notation}(Partial trace associated to a vector)
We extend  the  quantum partial trace $\qptr$ on $\widetilde{V}_{\cN}^{\otimes 2}$ to $\widetilde{V}_{\cN}^{\otimes n}$, which is given by:
\[
\qt_{\widetilde{V}_{\cN}}:=\qptr_{\widetilde{V}_{\cN}} \circ \qptr_{\widetilde{V}_{\cN}^{\otimes 2}} \circ \cdots \circ \qptr_{\widetilde{V}_{\cN}^{\otimes(n-1)}} : \operatorname{End}(\widetilde{V}_{\cN}^{\otimes n})\rightarrow \operatorname{End}(\widetilde{V}_{\cN}).
\]
where its evaluation on the vector $v_i$ as:
\begin{equation*}
\qt_{\widetilde{V}_{\cN}}(f)_{v_i}^{v_i}=s^{-(\cN-1)(n-1)}q^{{\cN(\cN-1)}(n-1)}
\sum_{j_2, j_3 \cdots, j_n=0}^{\cN-1}
q^{-2\sum_{k=2}^n j_k}  f_{v_{i, j_2, \cdots  j_n}}^{v_{i, j_2, \cdots  j_n}}.
\end{equation*}
\end{notation}
\begin{defn}
\label{O}
For $\cN \in \mathbb N$ and for $\beta_n \in B_n$ let us consider 
$
\widetilde\Omega(\beta_n)(q,s) \in \Li_{\cN}
$
 obtained from the above braid action via the quantum partial trace on the first component:
\begin{equation*}
\widetilde\Omega_{\cN}(\beta_n)(q,s)
= 
s^{w(\beta_n)(\cN-1)}q^{-w(\beta_n){\cN(\cN-1)}} \cdot 
\qt_{\widetilde{V}_{\cN}}\left( \widetilde\rho_{\cN,n}(\beta_n)
\right)_{v_0}^{v_0} \in \Li_{\cN}.
\end{equation*}
\end{defn}
The properties of the quantum partial trace implies Theorem \ref{thm:THEOREM} and Corollary \ref{coro:prime} which we remind below.
\begin{thm}[{\bf Level $\cN$ maximal universal invariant}]\label{thm:THEOREMG}
For any $\cN \in \N$, $\cN \geq 2$, $\widetilde{\Li}_{\cN}$ is the largest quotient of the polynomial ring $\Liq$ with the property that the image of the quantum trace in this quotient gives a knot invariant. 
\par
More precisely, the image of the level $\cN$ quantum trace in this quotient ring $\widetilde{\Omega}_{\cN}(L)(q,s) \in \widetilde{\Li}_{\cN}$ is a knot invariant. 
Also, if $\widetilde{\Li}_{\cN}'$ is a quotient of $ \Li$ in which $\widetilde{\Omega}_{\cN}(\beta_n)$ is a knot invariant, then the quotient onto $\widetilde{\Li}_{\cN}'$ factors through $\widetilde{\Li}_{\cN}$.
\end{thm}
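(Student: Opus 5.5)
We split the statement into two parts: existence (the invariant over $\widetilde{\Li}_{\cN}$ is well defined) and maximality (any quotient in which the construction works factors through $\widetilde{\Li}_{\cN}$).

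\emph{Existence.} By the Proposition in Subsection \ref{action2}, the $R$-matrix \eqref{eq:Rpositive}, \eqref{eq:Rnegative} specialised to $\widetilde{\Li}_{\cN}=\Liq/\cC_\cN$ preserves $\widetilde{V}_{\cN}^{\otimes 2}$. Hence we have braid representations $\widetilde\rho_{\cN,n}$, and they satisfy the braid relations because $\hat\rho_n$ does.

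Next, Theorem \ref{thm:uqtr1} gives the quantum partial trace with weights $g_j=g_0q^{-2j}$. It satisfies \eqref{eq:qtcondition} with $a=s^{-\cN+1}q^{\cN(\cN-1)}$, using Lemma \ref{lem:qptr} for the stabilisation identity. We then run the standard Markov argument.
\begin{itemize}
\item \emph{Conjugation.} $\qt_{\widetilde V_\cN}(\widetilde\rho(\alpha\beta\alpha^{-1}))=\qt_{\widetilde V_\cN}(\widetilde\rho(\beta))$. This holds because the weight operator $K$-type diagonal twist $q^{-2j}$ commutes with $R$: the $R$-matrix preserves total weight $i+j$.
\item \emph{Stabilisation.} \eqref{eq:qtcondition} shows that adding $\sigma_n^{\pm1}$ multiplies the trace by $a^{\pm1}$. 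This is cancelled by the writhe factor $s^{w(\cN-1)}q^{-w\cN(\cN-1)}=a^{-w}$.
\item \emph{Scalar output.} $\qt(\widetilde\rho(\beta))$ is an endomorphism of $\widetilde V_\cN$ commuting with $U_q(sl_2)$ (by the Remark after the action proposition). It preserves weight and commutes with $E$, and $E$ lowers $v_i\mapsto v_{i-1}$. So the endomorphism is scalar, and it suffices to read off the $v_0$ coefficient.
\end{itemize}
Markov's theorem then gives invariance.

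\emph{Maximality.} Let $\pi':\Liq\to\widetilde{\Li}'_\cN$ be a quotient in which the construction yields an invariant. That is, the $\cN$-finite part must be preserved by the specialised braid action, so that $\widetilde\rho$ and the trace are defined. We show $\ker\pi'\supseteq\cC_\cN$, which by Proposition \ref{prop:EQ} equals $\widetilde\cI_\cN$.

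Take $i,j\le \cN-1$ and $0\le n\le i$ with $j+n\ge\cN$. The coefficient of $v_{j+n}\otimes v_{i-n}$ in $R(v_i\otimes v_j)$ is $s^{-(i+j)}q^{2(i-n)(j+n)+n(n-1)/2}$ times the generator ${n+j\brack j}_q\prod_{k}(sq^{-k-j}-s^{-1}q^{k+j})$ of $\cC_\cN$. The prefactor is a unit. For $R(v_i\otimes v_j)$ to lie in the $\cN$-part, each such generator must vanish in the quotient. Hence $\cC_\cN\subseteq\ker\pi'$, and $\Liq\to\widetilde{\Li}'_\cN$ factors through $\widetilde{\Li}_\cN$.

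The main obstacle is making this maximality precise. Requiring only that the final scalar be invariant is weaker than requiring closure of the action. We will interpret the hypothesis as in the statement: ``the image of the quantum trace'' presupposes $\widetilde\rho_{\cN,n}$ on $\langle v_0,\dots,v_{\cN-1}\rangle$, and we would say so explicitly. If a stronger reading is needed, one fallback is to exhibit, for each generator of $\cC_\cN$, a braid closure whose truncated trace fails Markov invariance by a unit multiple of that generator. We would try $\sigma_1^{\pm2}$-type two-strand braids first.

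The identification $\cC_\cN=\widetilde\cI_\cN$ itself is deferred to Proposition \ref{prop:EQ}.
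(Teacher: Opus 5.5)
Your outline matches the paper's proof. That proof is a one-line appeal to the action proposition over $\Liq/\cC_\cN$, Theorem~\ref{thm:uqtr1}, ``the standard argument'', and Proposition~\ref{prop:EQ}. Maximality there is built into $\cC_\cN$ being the ideal of out-of-range $R$-matrix coefficients, exactly as in your maximality step.

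Your conjugation bullet, however, does not prove what it claims. Commutation of the twist $q^{-2j}$ with $R$, plus cyclicity of the trace over strands $2,\dots,n$, only gives invariance under conjugation by braids in $\langle\sigma_2,\dots,\sigma_{n-1}\rangle$. Conjugation by $\sigma_1$ moves the open strand, so you need the scalar to be independent of which strand is cut. The usual route, multiplying by the quantum dimension, is blocked here: $\sum_j g_j=g_0\sum_{j=0}^{\cN-1}q^{-2j}$ is divisible by $\varphi_\cN(q^2)$, which is a zero divisor in the quotient ring. You need an ambidexterity-type input for the ADO specialisation, or a tangle-functor argument for long knots, checked on every component of the quotient. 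The paper also hides this in ``the standard argument'', but the reason you give does not cover it.

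More seriously, your maximality step proves exactly $\ker\pi'\supseteq\cC_\cN$. Passing from $\cC_\cN$ to $\widetilde\cI_\cN$ rests entirely on Proposition~\ref{prop:EQ}, which fails for composite $\cN$. Take $\cN=4$ and $g=\varphi_4(q^2)(s^2q^{-6}-1)$.
\begin{itemize}
\item Since $g\equiv-2(s^2+1)$ modulo $q^2+1$, we get $\widetilde\cI_4=(g)\cap(q^2+1,s^2+1)=(g)$.
\item At $q^2=-1$, $s^2=1$, every generator of $\cC_4$ vanishes, because each contains $[4]_q$ or a factor $sq^{-m}-s^{-1}q^m$ with $m$ even. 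Meanwhile $g=-4$ there.
\item Hence $\cC_4\subsetneq\widetilde\cI_4$; in fact $\cC_4=g\cdot(q^2+1,s^2-1)$.
\end{itemize}
The source is an index shift in Lemma~\ref{strcy}. Definition~\ref{def:condition} uses $\prod_{i=0}^{d-2}(s^2q^{-2i}-1)$, whose reduction modulo $\varphi_d(q^2)$ has the factor $s^2-1$ and is not $\frac{s^{2d}-1}{s^2-1}$.

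At that extra point every $n\geq 1$ term of $R$ vanishes, so $R$ is a signed flip and the normalised trace is $1$ for every knot. Therefore $\Liq/\cC_4\cong\Li_{4}\times\Q$ carries the invariant but does not factor through $\widetilde\Li_{4}$. What your argument, like the paper's, actually establishes is the statement with $\widetilde\Li_\cN$ read as $\Liq/\cC_\cN$, as in Definition~\ref{def:ring}. With $\widetilde\cI_\cN$ as in \eqref{idealm}, the maximality claim is false already for $\cN=4$.
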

\begin{coro}[{\bf Level $\cN$ maximal unified invariant at prime parameters}]
\label{coro:COROLLARY}
If $\cN \in \mathbb N$ is prime, the ring $\Liq_{\cN}$ is the largest quotient of the polynomial ring $\Li$ with the property that the image of the quantum trace in this quotient gives a knot invariant. 
More precisely, for any $\cN \in \cN$, $\cN\geq 2$ we have that 
\[
\Omega_{\cN}(L)(q,s) = \pi_{\cN}(\widetilde{\Omega}_{\cN}(\beta_n)) \in \Li_{\cN}
\]
 is a well defined oriented knot invariant where $\pi_\cN$ is the natural projection from $\widetilde\Li_\cN$ to $\Li_\cN$. 
 Also, if $\cN$ prime and we consider $\Li_{\cN}'$ a quotient of $ \Li$ in which $\Omega_{\cN}(\beta_n)$ is a knot invariant, then the quotient onto $\Li_{\cN}'$ factors through $\Li_{\cN}$.
\end{coro}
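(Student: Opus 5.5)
The statement to prove is Corollary~\ref{coro:COROLLARY}. I would deduce it from Theorem~\ref{thm:THEOREMG} together with the projection $\pi_\cN:\widetilde\Li_\cN\to\Li_\cN$ coming from the inclusion $\widetilde\cI_\cN\subseteq\cI_\cN$. The argument has two independent parts.

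\textbf{Invariance for arbitrary $\cN\geq 2$.} By Theorem~\ref{thm:THEOREMG}, $\widetilde\Omega_\cN(\beta_n)\in\widetilde\Li_\cN$ depends only on the closure $\hat\beta_n$. Suppose $\beta_n$ and $\beta'_m$ are braids with isotopic closures. Markov's theorem (as used in Theorem~\ref{thm:THEOREMG}) gives $\widetilde\Omega_\cN(\beta_n)=\widetilde\Omega_\cN(\beta'_m)$ in $\widetilde\Li_\cN$. Applying the ring morphism $\pi_\cN$ gives equality in $\Li_\cN$. So $\Omega_\cN(L):=\pi_\cN(\widetilde\Omega_\cN(\beta_n))$ is well defined.

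I would also note that this $\Omega_\cN$ agrees with the construction carried out directly over $\Li_\cN$. The reason is that $\pi_\cN$ is a ring morphism, hence it commutes with specialisation of the generic $R$-matrix entries and with the formula for $\qt$. Consequently $\pi_\cN\circ\widetilde\rho_{\cN,n}=\rho_{\cN,n}$ entrywise, and the normalising factor $s^{w(\cN-1)}q^{-w\cN(\cN-1)}$ is a unit in both rings.

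\textbf{Maximality when $\cN$ is prime.} For $\cN$ prime, the only divisors $d$ of $\cN$ are $1$ and $\cN$, so the intersection over $d\mid\cN$, $d\neq 1,\cN$ in the definition of $\widetilde\cI_\cN$ is empty. Hence $\widetilde\cI_\cN=\cI_\cN$; this is the Corollary after Proposition~\ref{prop:EQ}, i.e.\ \eqref{eq:EQp}. It follows that $\widetilde\Li_\cN=\Li_\cN$ and that $\pi_\cN$ is the identity. Now take a quotient $\Li'_\cN$ of $\Li$ in which $\Omega_\cN(\beta_n)=\widetilde\Omega_\cN(\beta_n)$ is a knot invariant. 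By the maximality clause of Theorem~\ref{thm:THEOREMG}, the map $\Li\to\Li'_\cN$ factors through $\widetilde\Li_\cN=\Li_\cN$, which is exactly the required statement.

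\textbf{Main point of care.} The only step needing attention is the identification $\widetilde\cI_\cN=\cI_\cN$ for prime $\cN$. I would read the empty intersection in \eqref{idealm} (and in Definition~\ref{def:condition}) as imposing no extra condition, which is the convention making $\widetilde\cJ_\cN=\cI_\cN$. The content of $\cC_\cN=\cI_\cN$ is Proposition~\ref{prop:EQ} in the Appendix, so nothing beyond citing it is needed here.
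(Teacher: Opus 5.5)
Your proposal is correct and follows the paper's own route. The paper likewise obtains the corollary directly from Theorem~\ref{thm:THEOREMG}: for general $\cN$ the invariant is pushed through $\pi_\cN$, and for $\cN$ prime the divisor condition is vacuous, so $\widetilde\cI_\cN=\cI_\cN$ and the maximality clause carries over to $\Li_\cN$; your remark that $\pi_\cN\circ\widetilde\rho_{\cN,n}=\rho_{\cN,n}$ entrywise only makes explicit what the paper leaves implicit.
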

\begin{rmk}
[Level $\cN$ invariant over the integers]
\label{ringz} 
We would like to remark that all the construction works over the quotient ring of $\Z[q^{\pm2}, s^{\pm 2}]$ instead of $\Q[q^{\pm2}, s^{\pm 2}]$, quotiented by the ideal generated by the same elements. 
Let $\Li^{\Z}_{\cN} = \Z[q^{\pm2}, s^{\pm 2}]/\cI_\cN$, then we obtain a well-defined knot invariant
$\Omega_{\cN}(L)(q,s) \in \Li^{\Z}_{\cN}$.
\end{rmk}
 \section{Interpolation formula in the level $\cN$-quotient ring}
 \label{rec}
  In this section we are going to prove Theorem \ref{TINT}, which provides a formula for the level $\cN$ interpolation invariant. In order to prove this, we will first show that both our level $\cN$ interpolation and level $\cN$ universal invariants recover the coloured Jones and ADO invariants.
 \subsection{Recovering coloured Jones and ADO invariants} 
 \begin{thm}[The level $\cN$ unified invariant recovers coloured Jones and ADO invariants]\label{JA1} We have that: 
 \begin{equation*}
\Omega_{\cN}(L)\mid_{s = q^{1-\mathcal{N}}} = J_{\mathcal{N}}(L, q), \qquad
\Omega_{\cN}(L)\mid_{q=\xi_{\cN}}=\Phi_{\mathcal{N}}(L, s). 
\end{equation*}
 \end{thm}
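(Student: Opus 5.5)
The plan is to show that both specialisations are well-defined ring maps out of $\Li_{\cN}$. We then check that, after the specialisation, the braid representation $\rho_{\cN,n}$, the quantum trace and the writhe normalisation become exactly the standard data defining $J_{\cN}$ and $\Phi_{\cN}$.

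\textbf{Step 1 (the specialisations factor through $\Li_{\cN}$).} Substituting $s=q^{1-\cN}$ gives $s^2q^{-2(\cN-1)}-1=q^{4-4\cN}-1$, which is nonzero. However, the generator of $\cI_\cN$ also contains the factor $s^2q^{-2(\cN-1)}-1$, so the relevant point is to check which factor dies. I expect that the intended convention sends $s^2q^{-2(\cN-1)}\mapsto 1$, i.e. effectively $s=q^{\cN-1}$ in this parametrisation, matching the highest weight $sq^{-2i}$ with $i\le \cN-1$. I will normalise the substitution so that $s^2q^{-2(\cN-1)}-1\mapsto 0$, giving a map $\Li_\cN\to\Q[q^{\pm2}]$. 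For $q=\xi_\cN$ we have $\varphi_\cN(q^2)\mapsto 0$ when $\xi_\cN$ is chosen with $\xi_\cN^2$ a primitive $\cN$-th root of unity; this gives a map $\Li_\cN\to\Q(\xi_\cN)[s^{\pm2}]$. Both maps kill $\cI_\cN$, so $\Omega_\cN(L)$ specialises to a well-defined element by Corollary \ref{coro:COROLLARY}.

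\textbf{Step 2 (Jones side).} At $s=q^{\cN-1}$ the factor $\prod_{k=0}^{n-1}(sq^{-k-j}-s^{-1}q^{k+j})$ in the $F^{(n)}$-action vanishes as soon as $j+n\ge \cN$, since the term with $k+j=\cN-1$ appears. Hence $\langle v_0,\dots,v_{\cN-1}\rangle$ is the $\cN$-dimensional irreducible module $V_{\cN-1}$ over $\Q[q^{\pm 2}]$, the Jones–Kirby model. The $R$-matrix \eqref{eq:Rpositive} restricts to the standard $R$-matrix on $V_{\cN-1}^{\otimes 2}$. The weights $g_j=g_0q^{-2j}$ are the standard pivotal weights coming from $K^{-1}$ up to a scalar, and the writhe factor $s^{w(\cN-1)}q^{-w\cN(\cN-1)}$ becomes the inverse of the twist eigenvalue on $V_{\cN-1}$. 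So the specialised expression is the partial-trace (Turaev) formula for $J_\cN(L,q)$ normalised at the unknot, where the trace is read off on $v_0$.

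\textbf{Step 3 (ADO side).} At $q=\xi_\cN$ we have ${n+j\brack j}_q=0$ whenever $j<\cN\le n+j$, because $[\cN]_{\xi}=0$ while the denominator does not vanish. So the subspace is again preserved, and it is the $\cN$-dimensional nilpotent-type module with generic parameter $s$ used to define the ADO invariant. The same trace weights give the modified/renormalised trace computed on the cut strand (the $(1,1)$-tangle), which is precisely how $\Phi_\cN(L,s)$ is defined. Matching the normalisation conventions then yields $\Phi_\cN(L,s)$.

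The main obstacle is the convention matching in Steps 2 and 3. This includes the sign and exponent of $s$ in the substitution, the framing factor, and the choice of $\xi_\cN$. We also need to check that the ADO normalisation in the literature, with its possible extra factor on the cut strand, agrees with reading the coefficient at $v_0$. I would first verify these conventions on the unknot and on $\sigma_1^{\pm1}\in B_2$ before appealing to the general braid-closure argument.
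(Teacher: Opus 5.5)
Your proposal is correct and follows the same route as the paper. You specialise $V_{\cN}$, recognise the standard $\cN$-dimensional module at generic $q$ (resp.\ the ADO module at $q=\xi_{\cN}$), and read off $J_{\cN}$ (resp.\ $\Phi_{\cN}$) from the braid-closure formula. You are also more careful than the paper in checking that both specialisations factor through $\Li_{\cN}$ and that the trace weights and writhe factor match the standard normalisations.

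Your switch to $s=q^{\cN-1}$ is justified: $s=q^{1-\cN}$ neither kills the generator $\varphi_{\cN}(q^2)(s^2q^{-2(\cN-1)}-1)$ of $\cI_{\cN}$ nor truncates the Verma module. The only slip is that the weights $g_j=g_0q^{-2j}$ are proportional to the eigenvalues $sq^{-2j}$ of $K$, not of $K^{-1}$.
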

\begin{proof}
We recall that the invariant $\Omega_{\cN}(L)(q,s)$ is constructed from the representation theory of $U_q(sl_2)$ over the ring $\Li$. It uses the induced braid action on the specialisation of the $\cN$-part from the Verma module:
$$
V_{\cN}:=\langle v_{0},...,v_{\cN-1}\rangle_{\Li_{\cN}} \subseteq \hat{V}\mid_{\Li_{\cN}}.
$$
Further on, if we specialise again at $s = q^{1-\mathcal{N}}$ , we obtain that $V_{\cN}\mid _{s = q^{1-\mathcal{N}}}$ is the usual $\cN$-dimensional representation of $U_q(sl_2)$ at generic $q$. This shows that :
$$
\Omega_{\cN}(L)\mid_{s = q^{1-\mathcal{N}}} = J_{\mathcal{N}}(L, q).
$$
Dually, if we consider the specialisation at roots of unity $q=\xi_{\cN}$ we have that $V_{\cN}\mid_{q=\xi_{\cN}}$ is the is the $\cN$-dimensional representation of $U_q(sl_2)$ at generic the $2\cN^{th}$ root of unity. This shows that we also have:
$$
\Omega_{\cN}(L)\mid_{q-\xi_{\cN}} = \Phi_{\mathcal{N}}(L, s)
$$
which concludes the proof of the statement.
\end{proof}
There is a natural projection $\pi_\cN : \widetilde\Li_\cN \to \Li_\cN$, we have the following.
  \begin{coro}[The level $\cN$ universal invariant recovers coloured Jones and ADO invariants] We have that: \begin{equation*}
 \widetilde{\Omega}_{\cN}(L)\mid_{s = q^{1-\mathcal{N}}} 
 =
  J_{\mathcal{N}}(L, q), \qquad
\widetilde{\Omega}_{\cN}(L)\mid_{q=\xi_{\cN}}=\Phi_{\mathcal{N}}(L, s). 
\end{equation*}
 \end{coro}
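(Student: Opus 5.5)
The plan is to deduce the corollary from Theorem \ref{JA1} by factoring the relevant specialisation maps through the natural projection $\pi_\cN:\widetilde\Li_\cN\to\Li_\cN$.

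The first step is to check that both specialisations $s\mapsto q^{1-\cN}$ and $q\mapsto\xi_\cN$ are well defined on $\widetilde\Li_\cN$. Their target rings are $\Q[q^{\pm 2}]$ and $\Q(\xi_\cN)[s^{\pm2}]$ respectively. Since $\widetilde\cI_\cN\subseteq\cI_\cN$, it suffices to show that the generator $\varphi_\cN(q^2)(s^2q^{-2(\cN-1)}-1)$ of $\cI_\cN$ lies in the kernel of each map.
\begin{itemize}
\item Setting $s=q^{1-\cN}$ kills the factor $s^2q^{-2(\cN-1)}-1$ only if the normalisation of the exponents is matched. I will check this against the convention used in Theorem \ref{JA1}, where the same specialisation is applied to $\Li_\cN$; since that theorem is assumed, the map is already well defined on $\Li_\cN$.
\item Setting $q=\xi_\cN$ kills $\varphi_\cN(q^2)$.
\end{itemize}
Hence both maps factor as
\[
\widetilde\Li_\cN\xrightarrow{\pi_\cN}\Li_\cN\longrightarrow(\text{target ring}).
\]

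The second step is to identify $\pi_\cN(\widetilde\Omega_\cN(L))$ with $\Omega_\cN(L)$, which holds by Corollary \ref{coro:COROLLARY}. The underlying reason is functoriality. The braid action $\widetilde\rho_{\cN,n}$ and the quantum trace, with the same $g_0=s^{-(\cN-1)}q^{\cN(\cN-1)}$ and weights $q^{-2j}$, are defined by the same matrix entries coming from the generic $R$-matrix \eqref{eq:Rpositive}, \eqref{eq:Rnegative}. Reducing these entries modulo $\cI_\cN$ therefore gives exactly the construction over $\Li_\cN$. The writhe normalisation is also the same, so applying $\pi_\cN$ commutes with taking $\qt(\cdot)_{v_0}^{v_0}$.

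Finally, I compose the two steps:
\[
\widetilde\Omega_\cN(L)\big|_{s=q^{1-\cN}}=\Omega_\cN(L)\big|_{s=q^{1-\cN}}=J_\cN(L,q),
\]
and similarly at $q=\xi_\cN$ we get $\Phi_\cN(L,s)$, both by Theorem \ref{JA1}.

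The only point that needs care is the well-definedness of the specialisations on the quotient rings, namely that the kernels contain the ideals. Once that is settled, the rest is formal.
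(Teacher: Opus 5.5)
Your proposal is correct and follows the paper's own implicit argument. Because $\widetilde\cI_\cN\subseteq\cI_\cN$, both specialisations factor through $\pi_\cN$, and $\pi_\cN(\widetilde\Omega_\cN(L))=\Omega_\cN(L)$, so Theorem \ref{JA1} gives both identities.

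On the point you left open: taken literally, $s=q^{1-\cN}$ sends the generator of $\cI_\cN$ to $\varphi_\cN(q^2)(q^{-4(\cN-1)}-1)\neq 0$. The specialisation that actually kills $\cI_\cN$ is therefore $s=q^{\cN-1}$; this is also the one that makes $sq^{-(\cN-1)}-s^{-1}q^{\cN-1}$ vanish, i.e.\ gives $v_0$ weight $q^{\cN-1}$. So $q^{1-\cN}$ is a sign slip already present in Theorem \ref{JA1}, and with it corrected your argument goes through unchanged.
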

 \subsection{Interpolation formula}
 Now we are ready to prove Theorem \ref{TINT}, which recall below.
 \begin{thm}[The level $\cN$ intersection form interpolates coloured Jones and ADO invariants] 
\label{JA3} 
We have that  $\Omega_{\cN}(L)(q,s) \in \Li_{\cN}$ is a well defined oriented knot invariant and globalises the $\cN^{th}$ coloured Jones and coloured Alexander invariants, as follows: 
\begin{equation*}
\Omega_{\cN}(L)(q,s) = J_{\mathcal{N}}(L, q) + \Phi_{\mathcal{N}}({L}, s) - \Phi_{\mathcal{N}}(L, q^{1-\mathcal N}).
\end{equation*}
 \end{thm}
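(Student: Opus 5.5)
Write $f=\varphi_\cN(q^2)$ and $g=s^2q^{-2(\cN-1)}-1$, so that $\cI_\cN=(fg)$. The approach is a Chinese-remainder argument in $\Li_\cN$ together with Theorem \ref{JA1}.

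\emph{Step 1: the two factors are coprime.} In $\Liq=\Q[q^{\pm2},s^{\pm2}]$ the elements $f$ and $g$ are coprime: $g$ is monic of degree one in $s^2$, while $f$ does not involve $s$. More precisely, $(f,g)$ is the unit ideal after tensoring with $\Q(q)$. Over $\Liq$ itself, however, $(f,g)$ is not the unit ideal: modulo $g$ we may substitute $s^2=q^{2(\cN-1)}$, and then $f$ is not a unit. Hence $\Liq/(fg)$ does not split as $\Liq/(f)\times\Liq/(g)$. Instead, we use the exact sequence
\[
0\to \Liq/(fg)\to \Liq/(f)\times \Liq/(g)\to \Liq/(f,g)\to 0.
\]
This sequence is exact because $f$ and $g$ are coprime in the UFD $\Liq$, so that $(f)\cap(g)=(fg)$. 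Consequently an element of $\Li_\cN$ is determined by its images in $\Liq/(g)$ and in $\Liq/(f)$.

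\emph{Step 2: identify the two images of $\Omega_\cN(L)$.}
\begin{itemize}
\item $\Liq/(g)$ is the specialisation $s^2=q^{2(1-\cN)}$, that is, $s=q^{1-\cN}$ on the $s^2$-level. By Theorem \ref{JA1}, the image of $\Omega_\cN(L)$ there is $J_\cN(L,q)$.
\item $\Liq/(f)=\Q(\xi)[s^{\pm2}]$ is the specialisation $q=\xi_\cN$. By Theorem \ref{JA1}, the image there is $\Phi_\cN(L,s)$.
\end{itemize}

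\emph{Step 3: check that the candidate element has the same images.} Let $X=J_\cN(L,q)+\Phi_\cN(L,s)-\Phi_\cN(L,q^{1-\cN})$. For this to make sense, $\Phi_\cN(L,s)$ must first be lifted from $\Q(\xi)[s^{\pm2}]$ to a polynomial in $\Q[q^{\pm2},s^{\pm2}]$. We choose the lift given by the Verma-module formula itself, i.e.\ the trace computed with $q$ kept generic. Then:
\begin{itemize}
\item Modulo $g$, the last two terms of $X$ cancel, since both become $\Phi_\cN(L,q^{1-\cN})$. So $X\equiv J_\cN$.
\item Modulo $f$, we need $J_\cN(L,\xi)=\Phi_\cN(L,\xi^{1-\cN})$. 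This is the classical fact that at $q=\xi_\cN$ and $s=\xi^{1-\cN}$ both specialisations are computed from the same $\cN$-dimensional module, namely $V_\cN$ specialised at both $f=0$ and $g=0$. By Theorem \ref{JA1} applied twice, both quantities are then the image of $\Omega_\cN(L)$ in $\Liq/(f,g)$. So $X\equiv \Phi_\cN(L,s)$.
\end{itemize}

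\emph{Step 4: conclude by injectivity.} We have shown that $X$ and $\Omega_\cN(L)$ have the same images in $\Liq/(f)$ and in $\Liq/(g)$. By the injectivity of the first map in the exact sequence of Step 1, $X=\Omega_\cN(L)$ in $\Li_\cN$.

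The main obstacle is Step 3: making sense of $\Phi_\cN(L,s)$ and $\Phi_\cN(L,q^{1-\cN})$ as elements of $\Liq$ rather than only of $\Q(\xi)[s^{\pm2}]$. The formula depends only on the class of the lift modulo $\cI_\cN$. Changing the lift by a multiple $fh$ changes $X$ by
\[
f\bigl(h(q,s)-h(q,q^{1-\cN})\bigr),
\]
which is divisible by $fg$. This is because $h(q,s)-h(q,q^{1-\cN})$ vanishes when $s^2=q^{2(1-\cN)}$, and so is divisible by $g$. Hence any lift works, and the interpolation formula is well defined. Verifying this independence, together with the compatibility $J_\cN(L,\xi)=\Phi_\cN(L,\xi^{1-\cN})$, is the delicate point. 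Once these are checked, the rest of the argument is formal.
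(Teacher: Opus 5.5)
Your argument is correct, but it is organised differently from the paper's. The paper never passes through $\Liq/(f)\times\Liq/(g)$. It first writes $\Omega_\cN(L)=\Phi_\cN(L,s)+f\,A$, using the specialisation $q=\xi_\cN$. It then expands $A=A|_{g=0}+g\,B$, so that $fA\equiv f\,A|_{g=0}$ modulo $fg$. Finally it reads off $f\,A|_{g=0}=J_\cN(L,q)-\Phi_\cN(L,q^{1-\cN})$ by specialising the first identity at $g=0$.

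That two-step division is constructive and needs neither the UFD fact $(f)\cap(g)=(fg)$ nor a separate compatibility check. The congruence $J_\cN(L,\xi)=\Phi_\cN(L,\xi^{1-\cN})$, which you have to extract from $\Liq/(f,g)$ via Theorem~\ref{JA1} applied twice, is automatic there: it is encoded in the divisibility of $J_\cN(L,q)-\Phi_\cN(L,q^{1-\cN})$ by $f$.

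Your fibre-product formulation makes the logic more transparent: the two specialisations jointly detect elements of $\Li_\cN$. Your check that the right-hand side does not depend on the chosen lift of $\Phi_\cN(L,s)$ to $\Liq$ fills a point the paper leaves implicit. The same lift ambiguity sits, unaddressed, in the paper's identity $\Omega_\cN-\Phi_\cN(L,s)=\Omega^\Phi_\cN\,\varphi_\cN(q^2)$.

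One small slip: in Step 2 you identify $\Liq/(g)$ with the specialisation $s^2=q^{2(1-\cN)}$. This contradicts your correct Step 1, where $g=0$ means $s^2=q^{2(\cN-1)}$. The inconsistency comes from the paper's notation ``$s=q^{1-\cN}$'', which does not match $g$ as defined. Your argument goes through unchanged once that specialisation is read as the one that actually kills $g$.
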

\begin{proof}
The formula for our quotient ring is the following, as defined in relation \eqref{qring}:
\begin{equation*}
\Li_{\cN}=\Q[q^{\pm2}, s^{\pm 2}] \ / \left(\varphi_\cN(q^2)\cdot (s^2q^{-2(\cN-1)}-1)\right)
\subseteq \Liq=\Q[q^{\pm2}, s^{\pm 2}].
\end{equation*}
We start with the specialization of our intersection form at roots of unity, which we know that leads to the $\cN^{th}$ coloured Alexander polynomial:
$$\Omega_{\mathcal{N}}|_{q = \xi_{\mathcal{N}}} = \Phi_{\mathcal{N}}(L)(s).$$
 So there exists $\Omega_{\mathcal{N}}^\Phi(L)(s, q) \in \Li_{\cN}$ such that 
\begin{equation*}
\Omega_{\mathcal{N}}(L)(s, q) - \Phi_{\mathcal{N}}(L)(s) = 
\Omega_{\mathcal{N}}^\Phi(L)(s, q)\, \varphi_{\mathcal{N}}(q^2).
\end{equation*}
We recall that $\varphi_{\mathcal{N}}(q)$ is the $\cN^{th}$ cyclotomic polynomial.
This means that:
\begin{equation}\label{r1}
\Omega_{\mathcal{N}}(L)(s, q)= \Phi_{\mathcal{N}}(L)(s) + 
\Omega_{\mathcal{N}}^\Phi(L)(s,q)\, \varphi_{\mathcal{N}}(q^2).
\end{equation}
On the other hand, the specialisation at natural parameters gives the $\cN^{th}$ coloured Jones polynomial:
$$
\Omega_{\mathcal{N}}(L)(q^{1-\mathcal{N}},\, q) = J_{\mathcal{N}}(L)(q).
$$
This shows that the following relation holds:
\begin{equation}\label{r2}
 \Phi_{\mathcal{N}}(L)(q^{1-\cN}) + 
\Omega_{\mathcal{N}}^\Phi(L)(q^{1-\cN},q)\, \varphi_{\mathcal{N}}(q^2)=J_{\mathcal{N}}(L)(q).
\end{equation}
On the other hand, by the definition of quotient rings we see that there exists $\Omega_{\mathcal{N}}^J(L)(s, q) \in \Li_{\cN}$ such that:
\begin{equation*}%\label{r3}
\Omega_{\mathcal{N}}^\Phi(L)(s,q)=\Omega_{\mathcal{N}}^\Phi(L)(q^{1-\cN},q)+\Omega_{\mathcal{N}}^J(L)(s, q) \cdot(s-1^{1-\cN}).
\end{equation*}
Then, in the quotient ring we have:
\begin{equation*}
\Omega_{\mathcal{N}}^\Phi(L)(s,q) \cdot \varphi_{\mathcal{N}}(q^2)=\Omega_{\mathcal{N}}^\Phi(L)(q^{1-\cN},q) \cdot \varphi_{\mathcal{N}}(q)+ \Omega_{\mathcal{N}}^J(L)(s, q).
\end{equation*}
since $\varphi_{\mathcal{N}}(q^2)(s-q^{1-\cN})$ becomes zero in $\Li_{\cN}$.
Combining this property with equation \eqref{r2} we obtain:
\begin{equation*}
\Omega_{\mathcal{N}}^\Phi(L)(s,q)\, \varphi_{\mathcal{N}}(q^2)=J_{\mathcal{N}}(L)(q)- \Phi_{\mathcal{N}}(L)(q^{1-\cN}).
\end{equation*}
This relation together with equation \eqref{r1} show that:
\begin{equation*}
\Omega_{\mathcal{N}}(L)(s, q)
=J_{\mathcal{N}}(L)(q) +\Phi_{\mathcal{N}}(L)(s)- \Phi_{\mathcal{N}}(L)(q^{1-\mathcal{N}}).
\end{equation*}
This concludes the interpolation formula presented in the statement. 
\end{proof}
\section{Appendix}\label{Ap}
\setcounter{section}{1}
\renewcommand{\thesection}{\Alph{section}}
\setcounter{equation}{0}
\setcounter{defn}{0}
Here we give the proof of Poposition \ref{prop:EQ}.
We first prove $\cC_\cN = \widetilde{\cJ}_{\cN}$ and then prove  $\widetilde\cJ_\cN = \widetilde\cI_\cN$.  
\begin{lem}
\label{lem:EQ3}
The   three  structural ideals introduced in Definitions \ref{def:condition} and \ref{def:maximal} are equal, i.e. 
\begin{equation*}
\cC_\cN = \widetilde{\cJ}_{\cN}. 
\end{equation*}
\end{lem}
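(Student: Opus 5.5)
The plan is to compute the generators of $\cC_\cN$ explicitly up to units, and then prove the two inclusions $\cC_\cN\subseteq\widetilde{\cJ}_\cN$ and $\widetilde{\cJ}_\cN\subseteq\cC_\cN$ separately. Since $sq^{-m}-s^{-1}q^{m}=s^{-1}(s^2q^{-2m}-1)$, the generator indexed by $(j,n)$, with $0\le j,n\le \cN-1$ and $j+n\ge\cN$, equals up to a unit
\[
G_{j,n}=B_{j,n}\,P_{j,n},\qquad P_{j,n}=\prod_{m=j}^{j+n-1}(s^2q^{-2m}-1),
\]
where $B_{j,n}={n+j\brack j}_q$. Up to a unit, $B_{j,n}$ is the product of those $\varphi_d(q^2)$, $d\ge 2$, for which adding $j$ and $n$ in base $d$ produces a carry, i.e. $\lfloor (n+j)/d\rfloor-\lfloor j/d\rfloor-\lfloor n/d\rfloor=1$. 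This is the usual cyclotomic factorisation of the Gaussian binomial.

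\emph{Inclusion $\cC_\cN\subseteq\widetilde{\cJ}_\cN$.} Fix $(j,n)$ as above.
\begin{itemize}
\item Since $j,n<\cN\le j+n$, the carry condition holds for $d=\cN$, so $\varphi_\cN(q^2)\mid B_{j,n}$. Moreover $j\le \cN-1\le j+n-1$, so the factor $s^2q^{-2(\cN-1)}-1$ appears in $P_{j,n}$. Hence $G_{j,n}\in\cI_\cN$.
\item Now fix a proper divisor $d\mid\cN$. If $(j,n)$ has a carry in base $d$, then $\varphi_d(q^2)\mid G_{j,n}$ and we are done.
\item If there is no carry, then $n\ge d$. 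Indeed, if $n<d$, the multiple $\cN$ of $d$ lies in $(j,j+n]$, which forces a carry. So the $n\ge d$ consecutive exponents $m=j,\dots,j+n-1$ cover every residue class modulo $d$. Modulo $\varphi_d(q^2)$, the factor $s^2q^{-2m}-1$ depends only on $m \bmod d$. Hence $P_{j,n}$ is congruent to a multiple of $\prod_{i=0}^{d-2}(s^2q^{-2i}-1)$, and $G_{j,n}\in\cJ_{\cN,d}$.
\end{itemize}
(For the small divisors excluded in the definition of $\cJ_{\cN,k}$ the same computation applies verbatim.)

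\emph{Inclusion $\widetilde{\cJ}_\cN\subseteq\cC_\cN$.} This is the hard direction.
\begin{enumerate}
\item First I would exhibit the special generator $G_{\cN-1,1}=[\cN]_q\,(s^2q^{-2(\cN-1)}-1)$, which is a unit multiple of $\prod_{d\mid\cN,\,d>1}\varphi_d(q^2)\cdot(s^2q^{-2(\cN-1)}-1)$. Both ideals contain this element $G_0$.
\item Set $h=s^2q^{-2(\cN-1)}-1$. The polynomials $\varphi_d(q^2)$ for distinct $d$ are pairwise comaximal in $\Q[q^{\pm2}]$; this is where working over $\Q$ rather than $\Z$ is used, matching the remark after Proposition \ref{prop:EQ}. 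By a Chinese remainder argument in the $q$-variable, I would reduce the comparison of two ideals containing $G_0$ to the comparison of their images in each factor $\Q[q^{\pm2},s^{\pm2}]/(\varphi_d(q^2))\cong \Q(\zeta_d)[s^{\pm2}]$, together with the factor $(h)$ at $d=\cN$.
\item Each such factor is a PID, so the images are principal and are determined by divisibility. In the factor for $d=\cN$, both images should be generated by $h$, coming from $\cI_\cN$.
\item In the factor for a proper divisor $d$, the image of $\widetilde{\cJ}_\cN$ is generated by $\prod_{i=0}^{d-2}(s^2\zeta_d^{-2i}-1)$. I must produce a $(j,n)$ with no carry in base $d$ whose $P_{j,n}$ has exactly this gcd modulo $\varphi_d$. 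A natural choice is $n=d-1+$ (a suitable multiple of $d$) and $j=\cN-n$. I would also use gcd's of several such generators to cancel the factor at residue $d-1$ and any surplus factors.
\end{enumerate}

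I expect step 2 to be the main obstacle. The ideals are not generated by pure $q$-polynomials, so the reduction needs care: one must check that $I\supseteq (G_0)$ implies $I=\bigcap_d\bigl(I+(\varphi_d(q^2))\bigr)\cap\bigl(I+(h)\bigr)$ in the appropriate sense. I would try to verify this via the exact sequence for comaximal factors of $G_0$. Failing that, I would argue prime-by-prime on the localisations at the height-one and height-two primes containing $G_0$.
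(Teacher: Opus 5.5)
Your proof of $\cC_\cN\subseteq\widetilde{\cJ}_\cN$ by counting carries is correct. It is also more direct than the paper's route, which passes through the intermediate ideal $\cI'_\cN$ generated by the $G_{j,n}$ with $j+n=\cN$ and uses a $q$-Pascal induction. The reverse inclusion, however, has a genuine gap.

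\textbf{Step 2.} You treat $h=s^2q^{-2(\cN-1)}-1$ as one more Chinese-remainder factor of $G_0$. But $h$ is not comaximal with $\varphi_d(q^2)$ for any $d\mid\cN$. Writing $\zeta_d$ for the image of $q^2$, one has $h\equiv s^2\zeta_d-1$ modulo $\varphi_d(q^2)$, which is a non-unit of $\Q(\zeta_d)[s^{\pm2}]$. Hence $\Liq/(\varphi_d(q^2),h)\cong\Q(\zeta_d)\neq0$. So $I\supseteq(G_0)$ does not yield $I=\bigcap_d\bigl(I+(\varphi_d(q^2))\bigr)\cap\bigl(I+(h)\bigr)$, and the exact-sequence argument you have in mind needs comaximal factors.

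\textbf{Step 4.} This step aims at the wrong ideal. Every element of $\widetilde{\cJ}_\cN$ is a multiple of $h$. So its image in $\Q(\zeta_d)[s^{\pm2}]$ lies in $(\bar h)\cap(\bar P_d)=(\bar h\bar P_d)=(s^{2d}-1)$, where $P_d=\prod_{i=0}^{d-2}(s^2q^{-2i}-1)$; it is not $(\bar P_d)$. The ``factor at residue $d-1$'' you want to cancel is exactly $h$ modulo $\varphi_d(q^2)$. Since $h$ divides every $G_{j,n}$, it can never be cancelled. Moreover, your proposed generators have $n\equiv d-1\pmod d$ and $j=\cN-n\equiv 1\pmod d$. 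They therefore have a carry in base $d$ and vanish modulo $\varphi_d(q^2)$.

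\textbf{Step 3.} Likewise, both ideals have zero image modulo $\varphi_\cN(q^2)$ and modulo $h$. So ``generated by $h$'' is not the right description of the $d=\cN$ component.

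\textbf{The missing idea.} This is what the paper does for $\cI'_\cN$: divide out $h$ \emph{before} applying the Chinese remainder theorem.
\begin{enumerate}
\item Every $G_{j,n}$ contains the factor $h$, so $\cC_\cN=h\,\cC'$, with $\cC'$ generated by the $G_{j,n}/h$.
\item In $\Q(\zeta_d)[s^{\pm2}]$, $\bar h$ is coprime to $\bar P_d$: its root is $s^2=\zeta_d^{-1}$, whereas the roots of $\bar P_d$ are $\zeta_d^{i}$, $0\le i\le d-2$. Hence $h$ is a non-zero-divisor modulo $(\varphi_d(q^2),P_d)$. It follows that $\widetilde{\cJ}_\cN=h\,S$ with $S=(\varphi_\cN(q^2))\cap\bigcap_{d}(\varphi_d(q^2),P_d)$.
\item $\cC'$ contains $G_{\cN-1,1}/h=[\cN]_q$, whose cyclotomic factors are pairwise comaximal over $\Q$. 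So $\cC'=\bigcap_{d\mid\cN,\,d>1}\bigl(\cC'+(\varphi_d(q^2))\bigr)$ now holds legitimately.
\item Your first inclusion gives $\cC'\subseteq S$.
\item For a proper divisor $d$, take $(j,n)=(\cN-d,d)$. Up to a unit, $G_{j,n}/h\equiv\frac{\cN}{d}P_d$ modulo $\varphi_d(q^2)$, by Lemma \ref{lem:qi} and $q^{2d}\equiv1$. Since $\cN/d$ is invertible in $\Q$, this gives $\cC'+(\varphi_d(q^2))=(\varphi_d(q^2),P_d)$.
\item Also $\cC'+(\varphi_\cN(q^2))=(\varphi_\cN(q^2))$.
\end{enumerate}
Together these give $\cC'=S$, and therefore $\cC_\cN=\widetilde{\cJ}_\cN$.
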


\begin{proof}
We use the following intermediate ideal 
\begin{equation*}%\label{eq:INprime}
\cI'_\cN = \sum_{k=0}^{\cN-2} \left( {\cN \brack k+1}_q \cdot \prod_{i=1}^{\cN-k-1}(s^2q^{-2k-2i}-1) \right)\subseteq \Liq.
\end{equation*}
We first show that we have the equality $\cI_{\cN}^\prime = \widetilde{\cJ}_{\cN}$, in two parts, as below.
\par
{\bf Step I: $\cI_{\cN}^\prime \subset \widetilde{\cI}_{\cN}$} 
\par  
Each ideal $\left( {\cN \brack k+1}_q \cdot \prod_{i=1}^{\cN-k-1}(s^2q^{-2k-2i}-1) \right)$ in 
$\cI_\cN^\prime$ contains the factor $(s^2 q^{-2\cN+2}-1)$, so 
\begin{equation}\label{r1'}
\cI^\prime_\cN = (s^2 q^{-2\cN+2}-1)\left(\sum_{k=0}^{\cN-2}
 \Big({\cN \brack k+1}_q \cdot \prod_{i=1}^{\cN-k-2}(s^2q^{-2k-2i}-1) \Big)\right).
\end{equation} 
Now we look at the ideal $\sum_{k=0}^{\cN-2}
 \Big({\cN \brack k+1}_q \cdot \prod_{i=1}^{\cN-k-2}(s^2q^{-2k-2i}-1) \Big)$. 
 \begin{equation}\label{r2'}
 \begin{aligned} \sum_{k=0}^{\cN-2}
 \Big({\cN \brack k+1}_q \cdot \prod_{i=1}^{\cN-k-2}(s^2q^{-2k-2i}-1) \Big)
 &=
\sum_{k=1}^{\cN-1}
  \Big({\cN \brack k}_q \cdot \prod_{i=1}^{\cN-k-1}(s^2q^{-2k-2i+2}-1) \Big)
 =\\
&=\sum_{k=1}^{\cN-1}
  \Big({\cN \brack k}_q \cdot \prod_{i=0}^{\cN-k-2}(s^2q^{-2k-2i}-1) \Big).
 \end{aligned}
 \end{equation}
 Now, for the term associated to $k=\cN-2$ we write the quantum number $[\cN]_{q}$ as a product of all $\varphi_d(q^2)$ for $d\mid \cN, d \neq 1$, and so we obtain the following description of the ideals:
 \begin{equation*}
 \begin{aligned}
&\Big(\bigcap_{\text{\scriptsize$\begin{matrix}
 d\mid \cN \\ d \neq 1 \end{matrix}$}} \Big( \varphi_d(q^2)\Big)\Big) +
\sum_{k=1}^{\cN-2}
  \Big({\cN \brack k}_q \cdot \prod_{i=0}^{\cN-k-2}(s^2q^{-2k-2i}-1) \Big) 
  =\\
 &=\bigcap_{\text{\scriptsize$\begin{matrix}
 d\mid \cN \\ d \neq 1 \end{matrix}$}} \left(\Big( \varphi_d(q^2)\Big) +
\sum_{k=1}^{\cN-2}
  \Big({\cN \brack k}_q \cdot \prod_{i=k}^{\cN-2}(s^2q^{-2i}-1) \Big)  \right).
 \end{aligned}
 \end{equation*}
 The cyclotomic plynomial $\varphi_d(q^2)$ divide ${\cN \brack k}_q$ if and only if $k$ is not a multiple of $d$.  
 Moreover, $\varphi_d(q^2)$ and ${\cN \brack ld}_q$ are mutually prime.  
 We also know that $q^{2d}$ is equal to $1$ modulo $\varphi_d(q^2)$.  
 Therefore, we have
\begin{equation*}
\Big( \varphi_d(q^2)\Big) +
\sum_{k=1}^{\cN-2}
  \Big({\cN \brack k}_q \cdot \prod_{i=k}^{\cN-2}(s^2q^{-2i}-1) \Big)  
=
\Big( \varphi_d(q^2)\Big) +
\sum_{l=1}^{\cN/d-1}
  \Big({\cN \brack ld}_q \cdot \prod_{i=ld}^{\cN-2}(s^2q^{-2i}-1) \Big). 
\end{equation*}
Replacing $l$ by $\cN/d - l$, we get
\begin{equation*}
\Big( \varphi_d(q^2)\Big) +
\sum_{l=1}^{\cN/d-1}
  \Big({\cN \brack ld}_q \cdot \prod_{i=ld}^{\cN-2}(s^2q^{-2i}-1) \Big)
  =
\Big( \varphi_d(q^2)\Big) +
\sum_{l=1}^{\cN/d-1}
  \Big({\cN \brack ld}_q\prod_{i=0}^{ld-2}(s^2q^{-2i}-1) \Big) ,
\end{equation*}
which is contained in $\left(\varphi_d(q^2), \  \ \prod_{i=0}^{d-2}(s^2q^{-2i}-1) \right)$. 
Hence, $\cI_\cN^\prime$ is contained in $\widetilde{\cJ}_\cN$.
\par
{\bf Step II: $\widetilde{\cJ}_\cN \subseteq \cI_\cN^\prime$}
\par
By Lemma \ref{lem:qi} given below, we have ${\cN \brack ld}_q = \pm{\cN/d \brack l}_1$, which is a non-zero integer and is invertible in $\mathbb{Q}$, so we obtain:
\begin{equation*}
\begin{aligned}
\Big( \varphi_d(q^2)\Big) +
\sum_{l=1}^{\cN/d-1}
  \Big({\cN \brack ld}_q\prod_{i=0}^{ld-2}(s^2q^{-2i}-1) \Big) 
    &=
 \Big( \varphi_d(q^2)\Big) +
\sum_{l=1}^{\cN/d-1}
  \Big(\prod_{i=0}^{ld-2}(s^2q^{-2i}-1) \Big) 
  =\\
  &=
   \Big( \varphi_d(q^2)\Big) +
  \Big(\prod_{i=0}^{d-2}(s^2q^{-2i}-1) \Big).
 \end{aligned}
\end{equation*} 
Then, following relations \eqref{r1'} and \eqref{r2'} we have:
\begin{equation*}
\begin{aligned}
\cI^\prime_\cN &= (s^2 q^{-2\cN+2}-1)\left(\sum_{k=0}^{\cN-2}
 \Big({\cN \brack k+1}_q \cdot \prod_{i=1}^{\cN-k-2}(s^2q^{-2k-2i}-1) \Big)\right)=\\
 &=(s^2 q^{-2\cN+2}-1) \cdot \bigcap_{\text{\scriptsize$\begin{matrix}
 d\mid \cN \\ d \neq 1 \end{matrix}$}} \Big(\Big( \varphi_d(q^2)\Big) +
  \Big(\prod_{i=0}^{d-2}(s^2q^{-2i}-1) \Big) \Big).
 \end{aligned}
\end{equation*} 
On the other hand, let us recall that 
\begin{equation*}
\widetilde{\cJ}_\cN =   
\left((s^2 q^{-2\cN+2} - 1) \cdot\left(\varphi_\cN(q^2)\right)\right) \cap
 \bigcap_{\text{\scriptsize$\begin{matrix}
 d\mid \cN \\ d \neq 1, \cN \end{matrix}$}}
 \left(\varphi_d(q^2),\ \  \prod_{i=0}^{d-2}(s^2q^{-2i}-1) \right)
\end{equation*}
The previous relations imply that $\widetilde{\cI}_\cN \subseteq \cI_\cN^\prime$ and so we conclude the equality $\cI_\cN^\prime = \widetilde{\cI}_\cN$.  
\par
Next, we will show that $\cI_{\cN}^\prime = \cC_{\cN}$ in two steps.
\par
{\bf Step III: $\cI_{\cN}^\prime \subset \cC_{\cN}$ } 
\par
For this, we start with the definition of $\cC_\cN$:
\begin{multline*}
\cC_\cN= \sum_{\substack{i,j=0 \\ j+n\geq \cN,0\leq  n\leq i}}^{\cN-1}
 \left( {n+j \brack j}_q \prod_{k=0}^{n-1}(sq^{-k-j}-s^{-1}q^{k+j}) \right) 
 \\
 =
 \sum_{i=0}^{\cN-1}
 \sum_{j=1}^{\cN-1}
 \sum_{n=\cN-j}^{i}
 \left( {n+j \brack j}_q \prod_{k=0}^{n-1}(sq^{-k-j}-s^{-1}q^{k+j}) \right) 
 \\=
 \sum_{i=0}^{\cN-1}
 \sum_{j=0}^{\cN-2}
 \sum_{n=\cN-j-1}^{i}
 \left( {n+j+1 \brack j+1}_q \prod_{k=0}^{n-1}(sq^{-k-j-1}-s^{-1}q^{k+j+1}) \right).  
\end{multline*}
For $n=\cN-j-1$, 
\begin{multline*}
 \left( {n+j+1 \brack j+1}_q \prod_{k=0}^{n-1}(sq^{-k-j-1}-s^{-1}q^{k+j+1}) \right)  
 =
 \\
  \left( {\cN \brack j+1}_q \prod_{k=0}^{\cN-j-2}(sq^{-k-j-1}-s^{-1}q^{k+j+1}) \right)  
  =
  \\
   \left( {\cN \brack j+1}_q \prod_{k=1}^{\cN-j-1}(sq^{-k-j}-s^{-1}q^{k+j}) \right).
\end{multline*}
Therefore, we obtain that $\cI_{\cN}^\prime \subset \cC_{\cN}$.  
\par
{\bf Step IV: $\cC_{\cN}\subset \cI_{\cN}^\prime$} 
\par
For this last step we are going to show that:
\begin{equation*}
\left( {n+j+1 \brack j+1}_q \prod_{k=0}^{n-1}(sq^{-k-j-1}-s^{-1}q^{k+j+1}) \right) 
\in \cI_\cN^\prime.
\end{equation*}
Let $l = n+j+1 - \cN$.  
Then
\begin{equation*}
\left( {n+j+1 \brack j+1}_q \prod_{k=0}^{n-1}(sq^{-k-j-1}-s^{-1}q^{k+j+1}) \right)
=
\left( {\cN+l \brack j+1}_q \prod_{k=1}^{\cN+l-j-1}(sq^{-k-j}-s^{-1}q^{k+j}) \right).
\end{equation*}
We show that the above is contained in $\cI_\cN^\prime$ by an induction on $l$.  
If $l=0$, then it is contained in $\cI_\cN^\prime$.  
Now we assume that $\left( {\cN+l \brack j+1}_q \prod_{k=1}^{\cN+l-j-1}(sq^{-k-j}-s^{-1}q^{k+j}) \right) \in \cI_\cN^\prime$.  
Then
\begin{multline*}
\left( {\cN+l+1 \brack j+1}_q \prod_{k=1}^{\cN+l-j}(sq^{-k-j}-s^{-1}q^{k+j}) \right)
=
\\
q^{\cN+l-j}\left( {\cN+l \brack j+1}_q \prod_{k=1}^{\cN+l-j}(sq^{-k-j}-s^{-1}q^{k+j}) \right)
+
\\
q^{-j-1}\left( {\cN+l \brack j}_q \prod_{k=1}^{\cN+l-j}(sq^{-k-j}-s^{-1}q^{k+j}) \right).  
\end{multline*}
The last  two terms are contained in $\cI_\cN^\prime$ and so
$\left( {\cN+l+1 \brack j+1}_q \prod_{k=1}^{\cN+l-j}(sq^{-k-j}-s^{-1}q^{k+j}) \right) \in \cI_\cN^\prime$.  
\end{proof}
\begin{lem}\label{lem:qi}
If $k$ and $l$ are both multiple of $d$, then ${k \brack l}_q$
is equal to ${k/d \brack l/d}_1$ or $-{k/d \brack l/d}_1$ modulo $\Phi_d(q^2)$.  
\end{lem}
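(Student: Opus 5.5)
The plan is to reduce the statement to the classical $q$-Lucas theorem for Gaussian binomials at a primitive root of unity. The first step is to identify the quotient $\Q[q^{\pm 2}]/(\varphi_d(q^2))$ with the cyclotomic field $\Q(\zeta)$, where $\zeta$ is a primitive $d^{th}$ root of unity and $q^2\mapsto \zeta$. Since this quotient is a field (an integral domain), congruences modulo $\varphi_d(q^2)$ become equalities in $\Q(\zeta)$. Write $k=ad$ and $l=bd$.

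Next I pass from the symmetric quantum binomial to the Gaussian one. From $[x]_q = q^{-(x-1)}\frac{q^{2x}-1}{q^2-1}$ one gets
\[
{k \brack l}_q = q^{-l(k-l)}\binom{k}{l}_{t}, \qquad t=q^2,
\]
where $\binom{k}{l}_t$ is the ordinary Gaussian binomial in $t$, a polynomial in $t$. The prefactor has exponent $l(k-l)=d^2 b(a-b)$, and I will check that it is $\pm 1$ modulo $\varphi_d(q^2)$.
\begin{itemize}
\item If $d$ is even, or if $b(a-b)$ is even, then $q^{-l(k-l)}$ is a power of $q^{2d}$, which is $\equiv 1$.
\item Otherwise $d$ is odd and $b(a-b)$ is odd. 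Then $q^{-l(k-l)}$ is an odd power of $q^{d}$ times a power of $q^{2d}$. Its square is $1$, so it is a square root of unity, i.e.\ $\pm 1$ (interpreted after adjoining $q$, or equivalently the binomial lies in $q^{d}\Q[q^{\pm2}]$).
\end{itemize}
This is exactly the source of the sign in the statement.

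The main step is the $q$-Lucas identity $\binom{ad}{bd}_\zeta=\binom{a}{b}$. I would prove it with the $q$-binomial theorem
\[
\prod_{i=0}^{k-1}(1+x t^i)=\sum_{j=0}^{k} t^{j(j-1)/2}\binom{k}{j}_t x^j .
\]
Specialise $t=\zeta$. Because $\zeta$ is a primitive $d^{th}$ root of unity, the factors group into $a$ blocks of $d$ consecutive exponents, and each block satisfies
\[
\prod_{i=0}^{d-1}(1+x\zeta^i)=1-(-x)^d .
\]
So the left side becomes $(1-(-x)^d)^a$, whose coefficient of $x^{bd}$ is $(-1)^{b(d+1)}\binom{a}{b}$. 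On the right, the coefficient of $x^{bd}$ is $\zeta^{bd(bd-1)/2}\binom{ad}{bd}_\zeta$.

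The remaining work is sign bookkeeping. The number $\zeta^{bd(bd-1)/2}$ is a $d^{th}$ root of unity raised to a multiple of $d/\gcd(d,2)$, so it equals $\pm1$. Comparing coefficients then gives $\binom{ad}{bd}_\zeta=\pm\binom{a}{b}$, and combining with the prefactor from the previous step gives ${k \brack l}_q\equiv\pm{k/d \brack l/d}_1$. I expect this sign tracking, especially the parity of $d$ when passing between $q$ and $q^2$, to be the only delicate point. The statement only claims the result up to sign, so a crude "square equals one" argument suffices there. In particular the conclusion gives a nonzero rational integer, which is all that Step II of Lemma~\ref{lem:EQ3} uses.
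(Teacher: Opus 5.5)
Your proof is correct, up to the same sign convention the paper itself uses, and it takes a genuinely different route.

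\textbf{Comparison with the paper.} The paper stays with the symmetric quantum integers. It uses $[d+k]\equiv-[k]$ to cancel, block by block, every factor $[m]$ with $d\nmid m$ in ${k\brack l}_q$. It then evaluates the surviving factors via $[md]/[d]=q^{(m-1)d}+\cdots+q^{-(m-1)d}\equiv\pm m$.

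You instead write ${k\brack l}_q=q^{-l(k-l)}\binom{k}{l}_{q^2}$. This moves the real computation into $\Q[q^{\pm2}]/(\varphi_d(q^2))\cong\Q(\zeta_d)$, which is a field. There you prove the $q$-Lucas identity $\binom{ad}{bd}_{\zeta}=\binom{a}{b}$ from the $q$-binomial theorem and $\prod_{i=0}^{d-1}(1+x\zeta^i)=1-(-x)^d$. Your bookkeeping there is right. In fact $(-1)^{b(d+1)}$ and $\zeta^{bd(bd-1)/2}$ always coincide, so the identity holds with sign $+1$.

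The payoff of your route is that it isolates the entire sign ambiguity in the single prefactor $q^{-d^2b(a-b)}$. That prefactor is $\equiv 1$ unless $d$ and $b(a-b)$ are both odd.

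\textbf{The remaining case.} When $d$ and $b(a-b)$ are both odd, the step ``its square is $1$, so it is $\pm1$'' is not valid modulo $\varphi_d(q^2)$ in $\Q[q^{\pm1}]$. For odd $d$ one has $\varphi_d(q^2)=\varphi_d(q)\varphi_{2d}(q)$. The quotient is then $\Q(\zeta_d)\times\Q(\zeta_{2d})$, not a domain, and $q^d$ maps to $(1,-1)$. For instance ${6\brack 3}_q\equiv 2q^3$ modulo $\varphi_3(q^2)$, while $q^3\not\equiv\pm1$.

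So in that case your computation literally proves ${k\brack l}_q\equiv q^{d}\,{k/d\brack l/d}_1$. This becomes $\pm{k/d\brack l/d}_1$ only after specialising $q$ to a particular root of $\varphi_d(q^2)$: the sign is $+1$ at a primitive $d$-th root of unity and $-1$ at a primitive $2d$-th root. Your parenthetical ``equivalently the binomial lies in $q^d\Q[q^{\pm2}]$'' is therefore not equivalent to the $\pm$ claim.

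This is a defect of the statement rather than of your method. The paper's step $[d+k]\equiv-[k]$ tacitly uses $q^d=-1$, which for odd $d$ likewise holds only at primitive $2d$-th roots. In every reading the binomial is a unit times a nonzero integer modulo $\varphi_d(q^2)$, and that is all Step II of Lemma~\ref{lem:EQ3} needs.
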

\begin{proof}
First of all, we have
\[
[d + k] = -[k] \mod \Phi_d(q^2).  
\]
This implies that
\begin{equation}\label{eq:quantum integerd}
\frac{[ld + d-1][ld+d-2]\cdots[ld+1]}
{[d-1][d-2]\cdots[1]} = (-1)^{l(d-1)}.  
\end{equation}
We also know that
\begin{equation}\label{eq:quantumintegerquotient}
\frac{[kd]}{[d]} = 
q^{(k-1)d} + q^{(k-3)d} + \cdots + q^{-(k-1)d}
= \pm k \mod \Phi_d(q^2).  
\end{equation}
Let $k = k'd$ and $l = l'd$, then, by using \eqref{eq:quantum integerd} and \eqref{eq:quantumintegerquotient}, we get
\begin{equation*}
{k'd \brack l'd}_q 
=
\pm \frac{[k'd][(k'-1)d]\cdots [(k'-l'+1)d]}
{[l'd][(l'-1)d]\cdots[d]}
=
\pm {k' \brack l'}_1 \mod \Phi_d(q^2).  
\end{equation*}
Hence ${k'd \brack l'd}_q$ is equal to $\pm {k' \brack l'}_1$ modulo $\Phi_d(q^2)$.  
\end{proof}
To show that $\widetilde{\cJ}_\cN$ and $\widetilde\cI_{\cN}$ are equal, we use the following lemma. 
\begin{lem}[Structure of the maximal ideal at non-prime parameters] \label{strcy}The ideal $\widetilde{\cJ}_\cN$ has the following structure: 
\begin{equation*}
\widetilde{\cJ}_\cN
=\left( \varphi_\cN(q^2) \cdot (s^2 q^{-2\cN+2} - 1)\right)  \cap \bigcap_{\text{\scriptsize$\begin{matrix}
 d\mid \cN \\ d \neq 1, \cN \end{matrix}$}}
 \left(\varphi_d(q^2), \frac{s^{2d}-1}{s^2-1} \right).
\end{equation*}
 \end{lem}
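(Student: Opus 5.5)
The plan is to reduce the statement to a comparison of ideals one divisor $d$ at a time, working modulo $\varphi_d(q^2)$. Since $\varphi_d(q^2)$ is irreducible in $\Q[q^{\pm 2}]$, the quotient $\Q[q^{\pm2}]/(\varphi_d(q^2))$ is the cyclotomic field $K_d=\Q(\zeta_d)$, where $q^2\mapsto\zeta_d$ is a primitive $d$-th root of unity. An ideal of the form $(\varphi_d(q^2),h)$ is then the preimage of the principal ideal $(\bar h)$ in the PID $K_d[s^{\pm2}]$. So it is determined by the multiset of roots of $\bar h$ in the variable $s^2$, up to units.

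The first step is to compute these root sets for the two generators in question:
\begin{equation*}
\prod_{i=0}^{d-2}(s^2q^{-2i}-1)\ \mapsto\ \text{unit}\cdot\prod_{i=0}^{d-2}(s^2-\zeta_d^{i}),
\qquad
\frac{s^{2d}-1}{s^2-1}=\prod_{i=1}^{d-1}(s^2-\zeta_d^{i}).
\end{equation*}
The two root sets are $\{1,\zeta_d,\dots,\zeta_d^{d-2}\}$ and $\{\zeta_d,\dots,\zeta_d^{d-1}\}$. They differ by exchanging the root $1$ for $\zeta_d^{-1}$, so the individual ideals $\cJ_{\cN,d}$ and $(\varphi_d(q^2),\tfrac{s^{2d}-1}{s^2-1})$ are \emph{not} equal termwise. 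The equality must therefore come from the intersection with $\cI_\cN$ (and with the other factors), not from a term-by-term identification. This is the point I expect to be the main obstacle.

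To handle it, I would use the description obtained in Steps I--II of Lemma \ref{lem:EQ3}, namely
\begin{equation*}
\widetilde{\cJ}_\cN=\cI'_\cN=(s^2q^{-2\cN+2}-1)\cdot\bigcap_{d\mid\cN,\,d\neq1}\Big(\varphi_d(q^2),\ \prod_{i=0}^{d-2}(s^2q^{-2i}-1)\Big).
\end{equation*}
Modulo $\varphi_d(q^2)$ we have $q^{-2\cN+2}\equiv\zeta_d^{2}\cdot q^{-2\cN}\equiv\zeta_d$ (using $q^{2\cN}\equiv1$), so the factor becomes $s^2\zeta_d-1$. This supplies exactly the missing root $\zeta_d^{-1}=\zeta_d^{d-1}$. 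Hence $(s^2q^{-2\cN+2}-1)\prod_{i=0}^{d-2}(s^2q^{-2i}-1)\equiv\text{unit}\cdot(s^{2d}-1)$. The same product with the root $1$ removed, i.e. the ideal generated by $\tfrac{s^{2d}-1}{s^2-1}$, should then be compared with this via the extra divisibility by $s^2-1$. I would try to show that at every prime of $\Q[q^{\pm2},s^{\pm2}]$ lying over $\varphi_d(q^2)$, the localisations of both sides agree. Since all ideals involved are determined by such local data, the global equality would follow by checking elementwise: $g\in$ LHS iff, for each $d$, $\bar g$ is divisible in $K_d[s^{\pm2}]$ by the appropriate root product, and $g$ is divisible by $\varphi_\cN(q^2)(s^2q^{-2\cN+2}-1)$ at $d=\cN$.

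Concretely, I would take $g=\varphi_\cN(q^2)(s^2q^{-2\cN+2}-1)\,h$ lying in the right-hand side and reduce modulo each $\varphi_d(q^2)$ with $d\mid\cN$, $d\ne1,\cN$. There $\varphi_\cN(\zeta_d)$ is a nonzero element of $K_d$. The conditions "$\prod_{i=0}^{d-2}(s^2-\zeta_d^i)$ divides $(s^2\zeta_d-1)\bar h$" and "$\prod_{i=1}^{d-1}(s^2-\zeta_d^i)$ divides $(s^2\zeta_d-1)\bar h$" reduce respectively to $\bar h$ vanishing at $\{1,\dots,\zeta_d^{d-2}\}$ versus at $\{\zeta_d,\dots,\zeta_d^{d-2}\}$.

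The remaining discrepancy is the root $s^2=1$, and I would try to eliminate it by the following argument. At $s^2=1$ the module $\widetilde V_\cN$ degenerates: $F v_0=0$. Equivalently, the generator $\prod_{k}(sq^{-k}-s^{-1}q^{k})$ appearing in $\cC_\cN$ always contains the $k=0$ factor $s-s^{-1}$ when $j=0$. This suggests checking directly from the definition of $\cC_\cN$, via Lemma \ref{lem:qi}, which root set actually occurs. If the computation confirms the set $\{1,\dots,\zeta_d^{d-2}\}$, I would record that the statement should use $\prod_{i=0}^{d-2}$ rather than $\tfrac{s^{2d}-1}{s^2-1}$, or equivalently the substitution $s^2\mapsto s^2q^{2}$ relating the two. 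Otherwise the equality follows by the localisation comparison above.
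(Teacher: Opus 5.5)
Your reduction modulo $\varphi_d(q^2)$ to the PID $\Q(\zeta_d)[s^{\pm2}]$ is the right tool, and your root-set computation is correct. It forces a conclusion you stop short of drawing: with $\widetilde{\cJ}_\cN$ as defined, i.e.\ with $\cJ_{\cN,d}=(\varphi_d(q^2),\prod_{i=0}^{d-2}(s^2q^{-2i}-1))$, the lemma is false, so no proof can exist.

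For $g=\varphi_\cN(q^2)(s^2q^{-2\cN+2}-1)h$ you found two different conditions on $\bar h$. The left side requires $\bar h$ to vanish on $\{1,\zeta_d,\dots,\zeta_d^{d-2}\}$, while the right side requires vanishing only on $\{\zeta_d,\dots,\zeta_d^{d-2}\}$. Your fallback (``otherwise the equality follows by the localisation comparison'') is therefore not available. The detour through $\cC_\cN$ and $Fv_0$ cannot remove the root $s^2=1$, because $\cC_\cN$ does not occur in this lemma.

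Here is an explicit counterexample. Take $\cN=9$, $d=3$, let $\omega$ be a primitive cube root of unity, and set
\[
g=\varphi_9(q^2)\,(s^2q^{-16}-1)\,(s^2-q^2)\in\cI_9 .
\]
Modulo $\varphi_3(q^2)$ one has $\varphi_9(q^2)\equiv 3$, $q^{-16}\equiv q^2$ and $(s^2q^2-1)(s^2-q^2)\equiv q^2(s^4+s^2+1)$. Hence $g\equiv 3q^2(s^4+s^2+1)$, and $g$ lies in the right-hand side. On the other hand, every element of $\cJ_{9,3}=(\varphi_3(q^2),(s^2-1)(s^2q^{-2}-1))$ vanishes at $(q^2,s^2)=(\omega,1)$. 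At that point $g$ takes the value $3(\omega-1)(1-\omega)\neq 0$, so $g\notin\widetilde{\cJ}_9$.

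The paper's proof is exactly the termwise identification you rightly rejected. When it recalls $\widetilde{\cJ}_\cN$, it silently replaces $\prod_{i=0}^{d-2}$ by $\prod_{i=1}^{d-1}(s^2q^{-2i}-1)$. That shifted product has roots $\zeta_d,\dots,\zeta_d^{d-1}$ modulo $\varphi_d(q^2)$, so for it the congruence with $\frac{s^{2d}-1}{s^2-1}$ is true. But both the definition and Steps I--II in the proof of Lemma \ref{lem:EQ3} give $\prod_{i=0}^{d-2}$.

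What your computation actually proves is a corrected identity. Modulo $\varphi_d(q^2)$, the product $(s^2q^{-2\cN+2}-1)\prod_{i=0}^{d-2}(s^2q^{-2i}-1)$ is a unit times $s^{2d}-1$, and $\varphi_\cN(\zeta_d)\neq 0$. So $\cI_\cN\cap\cJ_{\cN,d}=\cI_\cN\cap(\varphi_d(q^2),s^{2d}-1)$ for each $d$, hence
\[
\widetilde{\cJ}_\cN=\cI_\cN\cap\bigcap_{d\mid\cN,\ d\neq 1,\cN}\bigl(\varphi_d(q^2),\,s^{2d}-1\bigr).
\]
State and prove that version rather than hedging.

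The check on $\cC_\cN$ you propose confirms it. By $q$-Lucas, every generator of $\cC_\cN$ vanishes at $(q^2,s^2)=(\zeta_d,1)$. The error therefore propagates to the claimed equality $\cC_\cN=\widetilde{\cI}_\cN$; for instance $\varphi_4(q^2)(s^2q^{-6}-1)\in\widetilde{\cI}_4\setminus\cC_4$.

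A minor slip: $q^{-2\cN+2}\equiv\zeta_d$ follows directly from $q^2\mapsto\zeta_d$ and $q^{2\cN}\equiv 1$, not via $\zeta_d^2$.
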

\begin{proof}
We recall that
\begin{equation*}
\widetilde{\cJ}_\cN
=\left( \varphi_\cN(q^2) \cdot (s^2 q^{-2\cN+2} - 1)\right)  \cap \bigcap_{\text{\scriptsize$\begin{matrix}
 d\mid \cN \\ d \neq 1, \cN \end{matrix}$}}
 \left(\varphi_d(q^2), \ \prod_{i=1}^{d-1}(s^2q^{-2i}-1)  \right).
\end{equation*}
In the next part, we are going to prove that:
\begin{equation*}
\left(\varphi_d(q^2),\ \  \prod_{i=1}^{d-1}(s^2q^{-2i}-1) \right)=\left(\varphi_d(q^2),\ \  \frac{s^{2d}-1}{s^2-1} \right).
\end{equation*}
This relation is ensured if one has:
\begin{equation*}
\prod_{i=1}^{d-1}(s^2q^{-2i}-1) \equiv \frac{s^{2d}-1}{s^2-1}   \left(\text{modulo } \  \varphi_d(q^2) \right).
\end{equation*}
Up to a power of $q$, the left hand side term is given by $\prod_{i=1}^{d-1}(s^2-q^{2i})$. Now, if we look at the coefficients of $s$, we see that up multiples of $q^{2d}$ they are the same as the coefficients of $s$ in $\frac{s^{2d}-1}{s^2-1}$, which means that modulo $\varphi_d(q^2)$ the two polynomials are equal. This shows the formula for $\widetilde{\cJ}_{\cN}$ from the statement.
\end{proof}
\begin{lem}[Structural theorem of the maximal quotient ideal]\label{lem:idcy}
The ideal $\widetilde{\cJ}_\cN$ is equal to the ideal $\widetilde\cI_{\cN}$ given in \eqref{qideal}, i.e. it can be described via the product of the ideals associated to the set of divisors of $\cN$, as below:
\begin{equation*}
\widetilde{\cJ}_\cN
=
\widetilde{\cI}_\cN
=\left(\left(\varphi_\cN(q^2)\right) \cdot (s^2 q^{-2\cN+2} - 1)\right) \cap \prod_{\text{\scriptsize$\begin{matrix}
 d\mid \cN \\ d \neq 1, \cN \end{matrix}$}}
 \left(\varphi_d(q^2), \frac{s^{2d}-1}{s^2-1} \right).
\end{equation*}
 \end{lem}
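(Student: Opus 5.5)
By Lemma \ref{strcy}, $\widetilde{\cJ}_\cN$ is the intersection of $\left( \varphi_\cN(q^2)(s^2 q^{-2\cN+2}-1)\right)$ with the ideals $P_d:=\left(\varphi_d(q^2), \frac{s^{2d}-1}{s^2-1}\right)$ for $d\mid \cN$, $d\neq 1,\cN$. The definition of $\widetilde\cI_\cN$ uses the product $\prod_d P_d$ in place of the intersection $\bigcap_d P_d$. It is therefore enough to show
\[
\prod_{\substack{d\mid \cN\\ d\neq 1,\cN}} P_d=\bigcap_{\substack{d\mid \cN\\ d\neq 1,\cN}} P_d
\]
in $\Liq=\Q[q^{\pm2},s^{\pm2}]$. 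The plan is the standard one: ideals that are pairwise comaximal have product equal to intersection. This is proved by induction on the number of factors, since if $I$ is comaximal with each of $J_1,\dots,J_r$ then it is comaximal with their product.

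The key step is to show $P_d+P_{d'}=\Liq$ for distinct divisors $d\neq d'$. Distinct cyclotomic polynomials are coprime over $\Q$, so $\varphi_d(x)$ and $\varphi_{d'}(x)$ generate the unit ideal of $\Q[x]$. Hence there are $a,b\in\Q[x]$ with $a\varphi_d+b\varphi_{d'}=1$. Substituting $x=q^2$ gives $1\in(\varphi_d(q^2),\varphi_{d'}(q^2))\subseteq P_d+P_{d'}$. The generators $\frac{s^{2d}-1}{s^2-1}$ are not needed at all, because the $q$-parts alone are already comaximal. This uses rational coefficients in an essential way; over $\Z$ the resultant of two cyclotomic polynomials can be a prime (for example $\varphi_p$ and $\varphi_{p^2}$), which is consistent with the earlier remark that equality fails over $\Z[q^{\pm1},s^{\pm1}]$.

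With pairwise comaximality in hand, $\prod_d P_d=\bigcap_d P_d$. Intersecting with $\left( \varphi_\cN(q^2)(s^2 q^{-2\cN+2}-1)\right)$ then gives $\widetilde{\cJ}_\cN=\widetilde\cI_\cN$. Combined with Lemma \ref{lem:EQ3} this also completes Proposition \ref{prop:EQ}. When $\cN$ is prime the index set is empty, both expressions reduce to $\cI_\cN$, and there is nothing to check.

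The only real obstacle is bookkeeping. The factor $\left( \varphi_\cN(q^2)(s^2 q^{-2\cN+2}-1)\right)$ enters both sides only as an intersection, so the statement amounts to the product of the $P_d$ being placed inside an intersection with it. I would state this explicitly, so that no comaximality is needed between $\cI_\cN$ and the $P_d$. That is important because $\varphi_\cN(q^2)$ is coprime to each $\varphi_d(q^2)$, but the factor $(s^2q^{-2\cN+2}-1)$ need not be comaximal with $P_d$.
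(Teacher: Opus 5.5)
Your proposal is correct and is essentially the paper's proof. Both start from the description in Lemma~\ref{strcy} and reduce to showing that the product of the ideals $\left(\varphi_d(q^2), \frac{s^{2d}-1}{s^2-1}\right)$ equals their intersection, which follows from pairwise comaximality, itself already given by the coprimality of distinct cyclotomic polynomials over $\Q$ (Lemma~\ref{comax}); your explicit remark that the factor $\left(\varphi_\cN(q^2)(s^2q^{-2\cN+2}-1)\right)$ enters both sides only through an intersection, so no comaximality with it is needed, is a clarification the paper leaves implicit.
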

\begin{proof} We recall the description of the ideal $\widetilde{\cJ}_\cN$ from Lemma \ref{strcy}:
\begin{equation*}
\widetilde{\cJ}_\cN
=\left((s^2 q^{-2\cN+2} - 1) \cdot\left(\varphi_\cN(q^2)\right)\right) \cap \bigcap_{\text{\scriptsize$\begin{matrix}
 d\mid \cN \\ d \neq 1, \cN \end{matrix}$}}
 \left(\varphi_d(q^2), \frac{s^{2d}-1}{s^2-1} \right).
\end{equation*}
We will show that:
\begin{equation*}
 \prod_{\text{\scriptsize$\begin{matrix}
 d\mid \cN \\ d \neq 1, \cN \end{matrix}$}}
 \left(\varphi_d(q^2),\frac{s^{2d}-1}{s^2-1} \right)
=
 \bigcap_{\text{\scriptsize$\begin{matrix}
 d\mid \cN \\ d \neq 1, \cN \end{matrix}$}}
 \left(\varphi_d(q^2),\frac{s^{2d}-1}{s^2-1} \right).
\end{equation*}
For this we are going to use the following criteria for a set of ideals, that ensures that their intersection is the same as the product. 
\begin{lem}
Let $R$ be a commutative ring and $I_1,...,I_n$ a set of ideals of $R$ such that any $I_i$ and $I_j$ are comaximal, meaning that:
$$I_i+I_j=R, \ \ \forall 1\leq i<j\leq n.$$
Then we have:
\begin{equation*}
\bigcap_{i=1}^n I_i=\prod_{i=1}^n I_i.
\end{equation*}
\end{lem}
We are going to use this criteria for our set of ideals. Let us consider $d_1\mid \cN, d_2\mid \cN$ such that $  1<d_1<d_2<\cN$.
We denote:
\begin{equation*}
\begin{aligned}
 \cI_{\cN,d_1}=\left(\varphi_{d_1}(q^2), \frac{s^{2d_1}-1}{s^2-1} )\right)\\
 \cI_{\cN,d_2}=\left(\varphi_{d_2}(q^2), \frac{s^{2d_2}-1}{s^2-1} )\right).
\end{aligned}
\end{equation*}
\begin{lem}\label{comax}
If $d_1 \neq d_2$, then $\varphi_{d_1}(q^2)$ and $\varphi_{d_2}(q^2)$ are coprime in $\Q[q^{\pm1}]$ and so the ideals $\left(\varphi_{d_1}(q^2)\right)$ and $\left(\varphi_{d_2}(q^2)\right)$ are comaximal.
\end{lem}
We remark that this property is not true if we are working over $\Z[q^{\pm2}]$, see \cite{TA}  for an explicit example.
 In turn, this means that  $ \cI_{\cN,d_1}=\left(\varphi_{d_1}(q^2), \frac{s^{2d_1}-1}{s^2-1} )\right)$ and  $ \cI_{\cN,d_2}=\left(\varphi_{d_2}(q^2), \frac{s^{2d_2}-1}{s^2-1} )\right)$ are comaximal in $\Liq$. This together with the previous Lemma ensures that
\begin{equation*}
\bigcap_{\text{\scriptsize$\begin{matrix}
 d\mid \cN \\ d \neq 1, \cN \end{matrix}$}}
 \left(\varphi_d(q^2), \frac{s^{2d}-1}{s^2-1}  \right)
=
  \prod_{\text{\scriptsize$\begin{matrix}
 d\mid \cN \\ d \neq 1, \cN \end{matrix}$}}
 \left(\varphi_d(q^2), \frac{s^{2d}-1}{s^2-1} \right)
\end{equation*}
which concludes the proof of this lemma. 
\end{proof}
%
%\bibliography{ref}{}
\bibliographystyle{plain}

 {
 
Universit\'e Clermont Auvergne, CNRS, LMBP, F-63000 Clermont-Ferrand, France, \\Institute of Mathematics “Simion Stoilow” of the Romanian Academy, 21 Calea Grivitei Street, 010702 Bucharest, Romania.}
\

{\itshape cristina.anghel@uca.fr, cranghel@imar.ro}
\par
{
Department of Mathematics, Faculty of Science and Engineering, 3-4-1 Ohkubo, Shinjuku-ku, Tokyo 156-844, Japan
}
\par
{\itshape murakami@waseda.jp}

\end{document}